\DeclareMathAlphabet{\mathpzc}{OT1}{pzc}{m}{it}
\newtheorem{theorem}{Theorem}[section]
\newtheorem*{claim*}{Claim}
\newtheorem{lemma}[theorem]{Lemma}
\newtheorem{lem}[theorem]{Lemma}
\newtheorem{cor}[theorem]{Corollary}
\newtheorem{proposition}[theorem]{Proposition}
\newtheorem{prop}[theorem]{Proposition}
\newtheorem{thm}[theorem]{Theorem}
\theoremstyle{definition}
\newtheorem{definition}[theorem]{Definition}\newtheorem{Def}[theorem]{Definition}
\newtheorem{example}[theorem]{Example}
\theoremstyle{remark}
\newtheorem{remark}[theorem]{Remark}
\newtheorem{Rmk}[theorem]{Remark}
\numberwithin{equation}{section}
\newcommand{\norm}[1]{\lVert#1\rVert}
\newcommand{\op}{\operatorname}
\newcommand{\bb}{\mathbb}
\newcommand{\Ga}{\Gamma}
\newcommand{\ga}{\gamma}
\newcommand{\la}{\lambda}
\newcommand{\La}{\Lambda}
\newcommand{\ba}{\backslash}
\newcommand{\cal}{\mathcal}
\newcommand{\br}{\mathbb R}
\newcommand{\G}{\Gamma}
\newcommand{\F}{\cal{F}}
\renewcommand{\frak}{\mathfrak}
\newcommand{\e}{\epsilon}
\newcommand{\mm}{\mathsf m}
\newcommand{\be}{\begin{equation}}
\newcommand{\ee}{\end{equation}}
\newcommand{\BR}{\op{BR}}
\newcommand{\inte}{\op{int}}
\renewcommand{\L}{\mathcal L}
\newcommand{\fa}{\mathfrak a}
\newcommand{\BMS}{\op{BMS}}
\newcommand{\dg}{D_\Gamma^{\star}}
\newcommand{\fg}{\mathfrak{g}}
\newcommand{\fp}{\mathfrak{p}}
\renewcommand{\i}{\op{i}}
\newcommand{\fk}{\mathfrak{k}}
\newcommand{\z}{\mathbb Z}
\renewcommand{\e}{\varepsilon}
\renewcommand{\epsilon}{\varepsilon}
\newcommand{\pc}{P^{\circ}}
\newcommand{\mc}{M^{\circ}}
\newcommand{\fc}{\mathcal F^\circ}
\newcommand{\npc}{\tilde{\nu}_\psi}
\newcommand{\E}{\mathcal E}
\newcommand{\Om}{\Omega}
\newcommand{\yg}{\mathfrak Y_\Ga}
\newcommand{\spa}{\La_{\psi}^{\spadesuit}}
\begin{document}

\title[Ergodic decompositions]{
Ergodic decompositions of geometric measures on Anosov homogeneous spaces.}

\author{Minju Lee}
\address{Mathematics department, University of Chicago, Chicago, IL  60637}
\email{minju1@uchicago.edu}
\author{Hee Oh}
\address{Mathematics department, Yale university, New Haven, CT 06520}
\email{hee.oh@yale.edu}
\begin{abstract} Let $G$ be a connected semisimple real algebraic group and
$\Gamma$  a Zariski dense Anosov subgroup of $G$ with respect to a minimal parabolic subgroup $P$.
Let $N$ be the maximal horospherical subgroup of $G$ given by the unipotent radical of $P$.
We describe the $N$-ergodic decompositions of all Burger-Roblin measures  as well as the $A$-ergodic decompositions of all Bowen-Margulis-Sullivan measures on $\Ga\ba G$.
As a consequence, we obtain the following refinement of the main result of \cite{LO}:
the space  of all {\it non-trivial} $N$-invariant ergodic and $P^\circ$-quasi-invariant
 Radon measures on $\Ga\ba G$, up to constant multiples, is homeomorphic to
 $ \br^{\text{rank}\,G-1}\times \{1, \cdots, k\}$
 where $k$ is the number of $\pc$-minimal subsets in $\Ga\ba G$.  
  \end{abstract}

\thanks{Lee and Oh respectively
supported by the NSF grants DMS-1926686 (via the
Institute for Advanced Study) DMS-1900101.
}

\maketitle

\tableofcontents

\section{Introduction}
Let $G$ be a connected  semisimple real algebraic group, i.e., the identity component of the group of real points of a semisimple algebraic group defined over $\br$. Let $\Ga<G$ be a Zariski dense Anosov subgroup of $G$ with respect to a minimal parabolic subgroup $P$.
Fix a Langlands decomposition $P=MAN$
where $N$ is the unipotent radical of $P$, $A$ is the identity component of a maximal real split torus of $G$ and $M$ is the maximal compact subgroup of $P$ commuting with $A$. The subgroup $N$ is a maximal horospherical subgroup of $G$, and in fact, any maximal horospherical subgroup of $G$ arises in this way.

In our earlier paper \cite{LO}, we showed that all $NM$-invariant Burger-Roblin measures on $\Ga\ba G$, parameterized by $ \br^{\text{rank}\,G-1}$, are $NM$-ergodic and that they describe {{precisely}} all {non-trivial} $NM$-invariant ergodic and $\pc$-quasi-invariant
 Radon (i.e., locally finite Borel) measures on $\Ga\ba G$,
 where $P^\circ$ is the identity component of $P$.  One cannot replace $NM$ by $N$ in these statements, as the Burger-Roblin measures are not $N$-ergodic in general. The main aim of this paper is to describe the $N$-ergodic decompositions of Burger-Roblin measures as well as
to classify all {non-trivial} $N$-invariant ergodic and $\pc$-quasi-invariant
 Radon measures on $\Ga\ba G$.
 When $G$ has rank one, the class of Anosov subgroups of $G$ coincides with that of convex cocompact subgroups. If $P$ is connected in addition, which is equivalent to saying $G\not\simeq \op{SL}_2(\br)$, then there exists a unique non-trivial $N$-invariant ergodic measure, as shown by Burger, Roblin and Winter (\cite{Bu}, \cite{Ro}, \cite{Win}).
 This unique measure is called the Burger-Roblin measure.
 We also mention that 
  when $\Gamma<G$ is a lattice, the classification of ergodic invariant measures for a maximal horospherical subgroup action
 was obtained by Furstenberg, Veech and Dani  (\cite{Fu},  \cite{Vee}, \cite{Da}), prior to Ratner's more general measure classification theorem for any connected unipotent subgroup action \cite{Ra}.

\medskip

We begin by recalling the definition of an Anosov subgroup. Let $\cal F:=G/P$  denote the Furstenberg boundary, and
 $\cal F^{(2)}$ the unique open $G$-orbit in $\cal F\times \cal F$. 
A Zariski dense discrete subgroup $\Ga<G$ is called an {\it Anosov subgroup} (with respect to $P$) if it is a  finitely generated word hyperbolic group which admits a $\Ga$-equivariant continuous embedding $\zeta$ of the Gromov boundary $\partial \Ga$  into $\cal F$ such that $(\zeta(x),\zeta(y))\in\cal F^{(2)}$ for all $x\ne y$ in $\partial\Ga$ (\cite{La}, \cite{GW}, \cite{KLP}, \cite{Wie}).
The class of Anosov subgroups include the Zariski dense images of representations in the Hitchin component as well as Zariski
dense Schottky subgroups.

 Denote by $\fa$ the Lie algebra of $A$ and fix a positive Weyl chamber $\fa^+\subset\fa$
so that $\log N$ is the sum of positive root subspaces. Fix a maximal compact subgroup $K$ of $G$ as in section \ref{sec.pre},
so that the Cartan decomposition
$G=KA^+ K$ holds for $A^+=\exp \fa^+$  (Def. \ref{Cartan}). 

 Let $\cal L_\Ga\subset \fa^+$ denote the limit cone of $\Gamma$ (Def. \ref{lc}), which is known to be a  convex cone with non-empty interior by Benoist \cite{Ben}.
Let $\psi_\Ga : \fa\to\bb R\cup\{-\infty\}$ be the growth indicator function  of $\Gamma$ as defined by Quint
 (Def. \ref{gi}). 
Consider the following set of linear forms on $\fa$: $$   D_\Ga^\star :=
\{\psi\in \fa^*: \psi \ge \psi_\Gamma,  \psi(v)=\psi_\Gamma(v) \text{ for some
$v\in \inte \L_\Ga$}\}.$$

For each $\psi\in \dg$, we denote by
$m^{\BR}_\psi$ and $m^{\BMS}_\psi$ respectively the Burger-Roblin measure and the Bowen-Margulis-Sullivan measure on $\Gamma\ba G$ associated to $\psi$
(see \eqref{bmss} and \eqref{brr}). The Burger-Roblin measures are all supported on
the unique $P$-minimal subset of $\G\ba G$:
$$\cal E:=\{[g]\in \G\ba G: gP\in \La\}$$
where $\La\subset \F$ denotes the limit set of $\Ga$.
In \cite{LO}, we showed that for $\Ga$ Anosov, each $m_\psi^{\BR}$ is $NM$-ergodic and the map $$\psi\mapsto m_\psi^{\BR}$$ gives a homeomorphism between $\dg$
and the space of all $NM$-invariant ergodic and $P$-quasi invariant Radon measures supported on $\cal E$, up to constant multiples. 
We also showed that all
$m^{\BMS}_\psi$, $\psi\in \dg$, are $AM$-ergodic. 

Denote by $\mathfrak Y_\Ga$ the collection of  all $\pc$-minimal subsets of $\G\ba G$. Fixing $\E_0\in \mathfrak Y_\Ga$, we set
$$P_\Ga:=\{p\in P:\E_0 p=\E_0\}.$$
By the work of Guivarc'h and Raugi \cite{GR}, the subgroup $P_\Ga$ is independent of the choice of $\E_0\in \mathfrak Y_\Ga$, and is a co-abelian subgroup of $P$ containing $\pc$.   It follows that for any $\E_0\in \mathfrak Y_\Ga$,
the map $[p]\mapsto \E_0 p$ defines a bijection between $P/P_\Ga$ and $\mathfrak Y_\Ga$. Considering the partition $\E=\bigsqcup_{\E_0\in \mathfrak Y_\Ga}\E_0$,
the following is our main theorem:
\begin{thm}\label{main} For any Anosov subgroup $\Ga<G$ and $\psi\in \dg$,
\begin{enumerate}
\item $m^{\BR}_\psi=\sum_{\cal E_0\in \mathfrak Y_\Ga} m^{\BR}_\psi|_{\E_0} $
is an $N$-ergodic decomposition;

\item $m^{\BMS}_\psi=\sum_{\cal E_0\in \mathfrak Y_\Ga} m^{\BMS}_\psi|_{\E_0 } $
is an $A$-ergodic decomposition.
\end{enumerate}
In particular,  the number of the $N$-ergodic components of $m^{\BR}_\psi$ 
as well as the $A$-ergodic components of $m^{\BMS}_\psi$
are given by $\# \mathfrak Y_\Ga=[P:P_\Ga]$, independent of $\psi$. \end{thm}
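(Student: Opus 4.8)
The plan is to peel off the soft structural content of the statement, reducing everything to two ergodicity assertions about a single piece, and then to prove those by combining the Bowen--Margulis--Sullivan/Burger--Roblin duality with a frame-flow-type ergodicity statement localized to one $\pc$-minimal set.

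\emph{Structural reductions.} Since $\pc\le P_\Gamma$, we get $[P:P_\Gamma]\le[P:\pc]=\#(M/\mc)<\infty$, so $\yg$ is finite; and since $P/P_\Gamma$ is abelian, $P_\Gamma\supseteq[P,P]$ is normal in $P$, hence $P_\Gamma$ is the stabilizer of \emph{every} $\E_0\in\yg$ and in particular $N,A\le\pc\le P_\Gamma$ preserve each $\E_0$. Therefore each $m^{\BR}_\psi|_{\E_0}$ is an $N$-invariant Radon measure, each $m^{\BMS}_\psi|_{\E_0}$ is $A$-invariant, the pieces for distinct $\E_0$ are mutually singular (disjoint supports), and $m^{\BR}_\psi=\sum_{\E_0}m^{\BR}_\psi|_{\E_0}$, $m^{\BMS}_\psi=\sum_{\E_0}m^{\BMS}_\psi|_{\E_0}$ because $\E=\bigsqcup\E_0$ carries all the mass. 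Each piece is nonzero: $M$ acts transitively on $\yg\cong P/P_\Gamma$ by right translation (as $P=M\pc$) and $m^{\BR}_\psi$, $m^{\BMS}_\psi$ are $M$-invariant, so if one $m^{\BR}_\psi(\E_0)$ vanished then all would, forcing $m^{\BR}_\psi=0$. Since a finite sum of pairwise mutually singular ergodic measures is \emph{the} ergodic decomposition, and the bijection $\yg\leftrightarrow P/P_\Gamma$ has already been recorded, the whole theorem reduces to: (i) $m^{\BR}_\psi|_{\E_0}$ is $N$-ergodic, and (ii) $m^{\BMS}_\psi|_{\E_0}$ is $A$-ergodic.

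\emph{Reduction to killing a compact group.} Put $M_0:=M\cap P_\Gamma=\mathrm{Stab}_M(\E_0)$, an open subgroup of $M$ with $\mc\le M_0$. If $B\subseteq\E_0$ is $NM_0$-invariant with $m^{\BR}_\psi(B)>0$, then $BM$ is $M$-invariant and also $N$-invariant (because $M$ normalizes $N$ and $B$ is $N$-invariant), hence $NM$-invariant, so $NM$-ergodicity of $m^{\BR}_\psi$ (from \cite{LO}) makes $BM$ conull; intersecting with $\E_0$ and using that the sets $\E_0 m$ are pairwise disjoint gives $B=BM\cap\E_0=BM_0$ conull in $\E_0$. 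Hence $m^{\BR}_\psi|_{\E_0}$ is $NM_0$-ergodic; identically, using that $M$ centralizes $A$ and the $AM$-ergodicity of $m^{\BMS}_\psi$, the measure $m^{\BMS}_\psi|_{\E_0}$ is $AM_0$-ergodic. Thus $M_0$ acts transitively, modulo null sets, on the $N$-ergodic (resp. $A$-ergodic) components of $m^{\BR}_\psi|_{\E_0}$ (resp. $m^{\BMS}_\psi|_{\E_0}$), and what remains is to show this transitive action is \emph{trivial}: that $M_0$ preserves each component.

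\emph{The dynamical core and the main obstacle.} I would establish (ii) first. This is a frame-flow ergodicity statement: given $AM_0$-ergodicity of $m^{\BMS}_\psi|_{\E_0}$, one must rule out a nontrivial permutation of the $A$-ergodic components by $M_0$. The $N$-directions, being contracted by $a^t$ for $a=\exp v$ with $v\in\inte\L_\Gamma$, pose no obstruction by a Hopf argument on the $\pc$-minimal set $\E_0$ (stable horospheres collapse along the flow, and $\E_0$ is $\pc=\mc AN$-minimal so the argument is not obstructed by reducibility). The genuinely delicate point is the $\mc$-direction; here I would use the precise description of the $\pc$-minimal decomposition coming from Guivarc'h--Raugi \cite{GR}, namely that $M_0$ is exactly the closed subgroup of $M$ leaving invariant and ergodic the relevant conformal density downstairs, so that after restricting to $\E_0$ the ``$M$-saturation'' transverse to $AN$ has already been exhausted --- combined with the Anosov hypothesis (Zariski density, and the resulting mixing/symbolic coding of the flow) to run the conditional-measure argument. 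This yields (ii), and then (i) follows by the standard Burger--Roblin/Bowen--Margulis--Sullivan duality along the $N$-foliation: in Hopf coordinates on $\E_0$ the conditional measures of $m^{\BR}_\psi|_{\E_0}$ along weak-unstable ($NA$-)leaves are, up to the conformal density, Lebesgue on $N$ times $m^{\BMS}_\psi|_{\E_0}$ along $A$, so an $N$-invariant set for $m^{\BR}_\psi|_{\E_0}$ saturates to an $A$-invariant set for $m^{\BMS}_\psi|_{\E_0}$; then (ii) gives $\pc$-ergodicity of the quasi-invariant measure $m^{\BR}_\psi|_{\E_0}$, and $N$-ergodicity follows since the residual $\mc A$-directions act by scaling ($A$) or are absorbed as in the BMS case ($\mc$). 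The main obstacle is precisely the passage from $AM_0$- to $A$-ergodicity of $m^{\BMS}_\psi|_{\E_0}$; everything else is bookkeeping around that core.
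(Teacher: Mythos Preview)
Your structural reductions are clean and correct, and your reduction to ``$m^{\BMS}_\psi|_{\E_0}$ is $AM_0$-ergodic, so it remains to show $M_0$ acts trivially on the $A$-components'' is exactly the shape of the paper's argument for part (2). The paper implements this via a Hopf argument (Lemma~\ref{lem.sub} and Lemma~\ref{lem.m0}): one shows that any $\Sigma$-measurable function is automatically $AM_\Gamma$-invariant, by realizing every element of $b(\Gamma)=AM_\Gamma$ as a limit of $N$--$N^+$ displacements through a conull ``good set'' $E$. Your sketch gestures at this via Guivarc'h--Raugi, which is the right input, though the actual argument requires the careful approximation in Lemma~\ref{lem.m0} and is not just a citation.

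The genuine gap is in your route to part (1). The claimed duality --- ``an $N$-invariant set for $m^{\BR}_\psi|_{\E_0}$ saturates to an $A$-invariant set for $m^{\BMS}_\psi|_{\E_0}$'' --- does not work as stated. In Hopf coordinates the two measures differ precisely in the $g^-$ marginal: $m^{\BR}_\psi$ uses the Lebesgue class $m_o$ on $\F$ while $m^{\BMS}_\psi$ uses $\nu_{\psi\circ\i}$ supported on $\La$, so the measures are mutually singular transverse to $N$. There is no way to pass an $N$-invariant $m^{\BR}_\psi$-set, which is determined by data in $(g^+,a,m)$, to an $A$-invariant $m^{\BMS}_\psi$-set, which is determined by data in $(g^+,g^-,m)$, by any ``saturation along leaves'' argument. (Also, $N$ is the contracting horospherical, so $NA$ is weak-\emph{stable}, not weak-unstable.)

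The paper's mechanism for (1) is different: it introduces the closed subgroup $\mathsf{E}_{\nu_\psi}\le AM$ of $\nu_\psi$-essential values (Definition~\ref{def.ess}) and shows $AM^\circ\subset\mathsf{E}_{\nu_\psi}$. Part (2) does feed into this, but only indirectly: the $A$-ergodic decomposition of $m^{\BMS}_\psi$ yields a full-measure set of \emph{strong Myrberg} limit points (Corollary~\ref{full}), which is then used, together with an equicontinuity estimate for the $AM$-valued Busemann map (Proposition~\ref{lem.diam}) and a Vitali/Lebesgue-density argument on the conformal measure (Lemmas~\ref{spn} and~\ref{lem.sh2}), to prove that generalized Jordan projections $\hat\la(\ga_0)$ with $\psi(\la(\ga_0))$ large are essential values (Proposition~\ref{prop.E}). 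Once $M^\circ\subset\mathsf{E}_{\nu_\psi}$, Lemma~\ref{lem.inv1} forces each $N$-ergodic component of $m^{\BR}_\psi$ to be $M^\circ$-invariant, and then $NM$-ergodicity plus Proposition~\ref{prop.NMA} finish as in Proposition~\ref{lem.in}. This essential-values step is the real content of part (1) and is not bypassed by any BR/BMS ``duality''.
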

See the subsection \ref{thm1} and Theorem \ref{thm.AE} for the proofs
of (1) and (2) respectively. 

 As $\pc\subset P_\Ga$, $P_\Ga$ is of the form $M_\Ga AN$ 
where 
$$M_\Ga:=\{m\in M:\E_0 m=\E_0\}.$$

Moreover, by \cite[Prop. 4.9(a)]{BQ}, the subgroup $M_\Ga$ can be explicitly described
as follows: 
$$M_\Gamma=\text{closure of }{\{m\in M: g^{-1} h am n g \in \Gamma\text{ for some $h\in N^+, a\in A, n\in N$}}\}$$
for any $g\in G$ such that
 $g\Gamma g^{-1}\cap \inte A^+M\ne \emptyset$, where $N^+$ denotes the opposite horospherical subgroup to $N$. The subgroup $M_\Gamma$ is not equal to $M$ in general:
there exists a Zariski dense Schottky subgroup $\Gamma$ with $M_\Ga\ne M$ \cite{Be2}, and for an Anosov subgroup
$\Gamma$ which arises as the image of a Hitchin representation into $\op{PSL}_n(\br)$, it is known that $M_\Gamma=\{e\}$ \cite{La}.

Since each $\E_0\in \mathfrak Y_\Ga$ is a second countable topological space, almost all orbits are dense with respect to an ergodic measure
with full support in $\E_0$.
Hence
Theorem \ref{main} implies:
\begin{cor}\label{aeae} Let $\E_0$ be a $\pc$-minimal subset of $\Ga\ba G$. Then 
\begin{enumerate}
\item for $m_\psi^{\BR}|_{\E_0}$ almost all $x\in \E_0$,
$xN$ is dense in $\E_0$;
\item for $m_\psi^{\BMS}|_{\E_0}$ almost all $x\in \E_0$,
$xA$ is dense in $ \op{supp } m_\psi^{\BMS}\cap \cal E_0$
.
\end{enumerate} 
\end{cor}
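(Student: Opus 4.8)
The plan is to deduce both assertions from Theorem~\ref{main} together with the soft fact that an ergodic measure of full support has almost every orbit dense. Concretely, I would first record the following lemma: \emph{if a group $H$ acts continuously on a second countable space $X$ and $\mu$ is an $H$-invariant Radon measure on $X$ which is $H$-ergodic and has $\op{supp}\mu=X$, then $\overline{xH}=X$ for $\mu$-a.e.\ $x\in X$.} To see this, fix a countable basis $\{U_i\}$ of non-empty open subsets of $X$. For each $i$ the set $Z_i:=\{x\in X:xH\cap U_i=\emptyset\}$ is $H$-invariant and closed (its complement is $\bigcup_{h\in H}U_i h$), hence Borel; since it is $H$-invariant, ergodicity forces $\mu(Z_i)=0$ or $\mu(X\setminus Z_i)=0$. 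The latter is impossible, since $U_i\subseteq X\setminus Z_i$ — every $x\in U_i$ lies in $xH\cap U_i$ — so it would give $\mu(U_i)=0$, contradicting $\op{supp}\mu=X$. Hence $\mu(Z_i)=0$ for all $i$, and the conull set $X\setminus\bigcup_i Z_i$ is precisely $\{x:\overline{xH}=X\}$, as $\{U_i\}$ is a basis.

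It then remains to verify the lemma's hypotheses in the two cases, which is where the decomposition of Theorem~\ref{main} enters; note that $\Ga\ba G$ is second countable, hence so is every subspace occurring below. I will also use the elementary observation that for a Radon measure $\mu$ and a subset $Y$ that is both open and closed in $\op{supp}\mu$ one has $\op{supp}(\mu|_Y)=Y$. For (1): by Theorem~\ref{main}(1), $m^{\BR}_\psi|_{\E_0}$ is $N$-invariant and $N$-ergodic; since $\op{supp} m^{\BR}_\psi=\E$ and the finitely many $\pc$-minimal sets are closed and partition $\E$, the set $\E_0$ is clopen in $\E$, so $\op{supp}(m^{\BR}_\psi|_{\E_0})=\E_0$, and the lemma with $X=\E_0$, $H=N$ yields (1). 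For (2): the decomposition $m^{\BMS}_\psi=\sum_{\E_0\in\mathfrak Y_\Ga}m^{\BMS}_\psi|_{\E_0}$ shows $m^{\BMS}_\psi$ is concentrated on $\E$; since $A\subset\pc\subset P_\Ga$ we have $\E_0 A=\E_0$, so $m^{\BMS}_\psi|_{\E_0}$ is $A$-invariant, and it is $A$-ergodic by Theorem~\ref{main}(2). As $\E_0$ is clopen in $\E\supseteq\op{supp} m^{\BMS}_\psi$, the set $Y:=\op{supp} m^{\BMS}_\psi\cap\E_0$ is clopen in $\op{supp} m^{\BMS}_\psi$, so the observation gives $\op{supp}(m^{\BMS}_\psi|_{\E_0})=\op{supp}(m^{\BMS}_\psi|_Y)=Y$, an $A$-invariant set; the lemma with $X=Y$, $H=A$ yields (2).

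I do not expect a genuine obstacle here: the substance is Theorem~\ref{main}, and Corollary~\ref{aeae} is a formal consequence of it. The only two inputs beyond that theorem — that $\E_0$ is clopen in the support of each measure, so that passing to the restriction does not lose support, and the elementary ergodicity lemma above — are both routine.
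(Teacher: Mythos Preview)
Your proposal is correct and follows essentially the same approach as the paper: the authors also deduce the corollary directly from Theorem~\ref{main} via the general principle that an ergodic measure of full support on a second countable space has almost every orbit dense. You have simply spelled out the countable-basis argument for that principle and the verification of supports in more detail than the paper does.
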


Indeed, Corollary \ref{aeae}(2) holds for $A^+$-orbits as well (see Corollary \ref{full3}).

 In view of our earlier work \cite{LO}, Theorem \ref{main} implies:
\begin{thm} \label{mmm} The space  of all $N$-invariant ergodic and $P^\circ$-quasi-invariant
 Radon measures on $\cal E$, up to constant multiples, is given by
 $\{m_\psi^{\BR}|_{\cal E_0}: \psi\in D_\Ga^\star, \cal E_0\in \mathfrak Y_\Ga\}$ and hence homeomorphic
 to $ \br^{\text{rank}\,G-1} \times \{1, \cdots, \# M/M_\Ga\}$. 
\end{thm}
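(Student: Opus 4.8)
The plan is to prove that the map $(\psi,\E_0)\mapsto[\,m_\psi^{\BR}|_{\E_0}\,]$, with $[\cdot]$ denoting the class modulo positive scalars, is a homeomorphism from $\dg\times\yg$ onto the space $\mathsf M$ of (classes of nonzero) $N$-invariant, $N$-ergodic, $\pc$-quasi-invariant Radon measures on $\E$, where $\yg$ carries the discrete topology. A preliminary observation that streamlines everything: since $\pc\subset P_\Ga$ and $P/\pc\cong M/M^\circ$ is finite (a compact Lie group has finitely many components), the index $[P:P_\Ga]=\#M/M_\Ga$ is finite, so $\yg$ is a finite set, and each $\E_0\in\yg$ --- being $\pc$-minimal, hence closed --- is clopen in $\E=\bigsqcup_{\E_0\in\yg}\E_0$. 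It is this clopenness that will force the $\yg$-direction to be discrete.

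First I would check that each $m_\psi^{\BR}|_{\E_0}$ indeed belongs to $\mathsf M$: it is $N$-invariant and $N$-ergodic by Theorem \ref{main}(1), which asserts exactly that $m_\psi^{\BR}=\sum_{\E_0\in\yg}m_\psi^{\BR}|_{\E_0}$ is the $N$-ergodic decomposition; it is a $\pc$-quasi-invariant Radon measure because $m_\psi^{\BR}$ is a $P$-quasi-invariant Radon measure and $\E_0$ is a $\pc$-invariant Borel subset of $\E$; it is nonzero because $m_\psi^{\BR}$ is $M$-invariant and $M$ acts transitively on $\yg$, so all the masses $m_\psi^{\BR}(\E_0)$ coincide and cannot simultaneously vanish; and its support, a nonempty closed $\pc$-invariant subset of the $\pc$-minimal set $\E_0$, equals $\E_0$.

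The crux is surjectivity onto $\mathsf M$. Given a nonzero $\mu\in\mathsf M$, I would average over the compact group $M$. Since $M$ normalizes $N$, centralizes $A$ and normalizes $M^\circ$, every translate $\mu^m$ is again $N$-invariant and $\pc$-quasi-invariant, so $\tilde\mu:=\int_M\mu^m\,dm$ is a nonzero $M$-invariant and $\pc$-quasi-invariant --- hence $P$-quasi-invariant --- Radon measure supported on $\E$. Writing $M_1:=\{m\in M:\mu^m=\mu\}$, a closed subgroup, the measures $\mu^m$ for $M_1 m\in M_1\ba M$ are pairwise mutually singular $N$-ergodic measures whose superposition is $\tilde\mu$; since $M$ permutes them transitively, $\tilde\mu$ is $NM$-ergodic. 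By the classification of \cite{LO}, $\tilde\mu=c\,m_\psi^{\BR}$ for a unique $\psi\in\dg$ and some $c>0$; and by Theorem \ref{main}(1) this same measure has the \emph{finite} $N$-ergodic decomposition $c\sum_{\E_0}m_\psi^{\BR}|_{\E_0}$ into mutually singular pieces. Uniqueness of the ergodic decomposition forces the two lists of $N$-ergodic components to agree up to positive scalars; in particular $\mu=\mu^e$ is a positive multiple of $m_\psi^{\BR}|_{\E_0}$ for some $\E_0\in\yg$. Injectivity is then straightforward: if $c\,m_\psi^{\BR}|_{\E_0}=c'\,m_{\psi'}^{\BR}|_{\E_0'}$, equality of supports gives $\E_0=\E_0'$ by the previous paragraph, and averaging over $M$ --- using $M$-invariance of $m_\psi^{\BR}$ and transitivity of $M$ on $\yg$ --- yields $\int_M(m_\psi^{\BR}|_{\E_0})^m\,dm=\op{Haar}(M_\Ga)\cdot m_\psi^{\BR}$, whence $c\,m_\psi^{\BR}=c'\,m_{\psi'}^{\BR}$ and $\psi=\psi'$, $c=c'$ by the injectivity in \cite{LO}.

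It remains to upgrade this bijection to a homeomorphism, for the weak-$*$ topology on Radon measures on $\E$ modulo positive scalars. For fixed $\E_0$ the restriction map $m\mapsto m|_{\E_0}$ is weak-$*$ continuous because $\E_0$ is clopen in $\E$, and $M$-averaging gives a continuous one-sided inverse; composing with the homeomorphism $\psi\mapsto m_\psi^{\BR}$ of \cite{LO} and the identification $\dg\cong\br^{\op{rank}G-1}$ shows $\psi\mapsto[m_\psi^{\BR}|_{\E_0}]$ is a homeomorphism onto its image. Distinct slices are separated: for $\E_0\ne\E_0'$ in $\yg$, the continuous function on $\E$ equal to a given $f\in C_c(\E)$ on $\E_0$ and to $0$ on the remaining pieces is detected by measures supported on $\E_0$ while annihilating those supported on $\E_0'$, so no net from one slice can converge to a measure lying in another. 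This gives $\mathsf M\cong\br^{\op{rank}G-1}\times\{1,\dots,\#M/M_\Ga\}$ with the second factor discrete. The hard part of the whole argument is the surjectivity step --- specifically, checking that the $M$-average $\tilde\mu$ is genuinely $NM$-ergodic (which needs the components $\mu^m$ to be pairwise mutually singular and transitively permuted by $M$) and then reconciling its $M$-orbit ergodic decomposition with the finite one provided by Theorem \ref{main}(1); everything else is bookkeeping on top of \cite{LO} and Theorem \ref{main}.
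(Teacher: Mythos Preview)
Your argument is correct and takes a somewhat different route from the paper for the surjectivity step. The paper first observes that any $N$-ergodic $\pc$-quasi-invariant $\mu$ on $\cal E$ is automatically $\pc$-\emph{semi}-invariant (for each $p\in\pc$ the Radon--Nikodym derivative $d(p_*\mu)/d\mu$ is $N$-invariant, hence a.e.\ constant by ergodicity), and then applies Proposition~\ref{prop.NMA}, which classifies such measures directly via the product coordinates $G\simeq G/\pc\times AM^\circ N$ and the $\Gamma$-ergodic decomposition of $\tilde\nu_\psi$ on $G/\pc$. You instead average $\mu$ over $M$, land in the $NM$-ergodic setting already classified by \cite{LO}, and then match the finite $N$-ergodic decomposition of $m_\psi^{\BR}$ from Theorem~\ref{main}(1) against the $M$-orbit $\{\mu^m\}$ by uniqueness of ergodic decomposition. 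Both routes rest on the same ingredients; yours trades the structure theory packaged in Proposition~\ref{prop.NMA} for a reduction to the known $NM$ case.

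One point should be made explicit. When you treat $\{\mu^m:M_1m\in M_1\backslash M\}$ as a \emph{list} and compare ``the two lists of $N$-ergodic components,'' you are tacitly using that $M^\circ\subset M_1$, i.e.\ that $\mu$ is $M^\circ$-invariant. This is exactly the $\pc$-semi-invariance observation the paper makes: for $m_0\in M^\circ$, $N$-ergodicity together with $\pc$-quasi-invariance give $\mu^{m_0}=c(m_0)\mu$, and the positive character $c$ on the compact connected group $M^\circ$ must be trivial. Without this, $M_1\backslash M$ could a priori be a continuum, and the clean finite matching you describe would not apply as stated (one would instead have to invoke the general uniqueness of ergodic decomposition for integrals of ergodic measures and the weak-$*$ continuity of $m\mapsto\mu^m$ to conclude that $\mu=\mu^e$ lies in one of the finitely many rays $\br_{>0}\,m_\psi^{\BR}|_{\E_0}$). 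Adding this one-line remark closes the gap and, incidentally, shows that your $M$-averaging is really a finite sum.
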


We mention a recent measure classification result \cite{LLLO} 
which is based on the above theorem.
\medskip

\subsection*{On the proofs} For each $\psi\in \dg$, there exists a unique $(\Ga, \psi)$-Patterson-Sullivan measure, say, $\nu_\psi$, on the limit set $\La\subset G/P$.
Denote  by $\npc$ the $M$-invariant lift of $\nu_\psi$ to $G/\pc$.
We first show 
that the $\Gamma$-ergodic components of $\npc$ and the $A$-ergodic components of $m_\psi^{\BMS}$ are respectively
 given by their restrictions to $\Ga$-minimal subsets of $G/\pc$ and to $\pc$-minimal subsets of $\Ga\ba G$;
 hence Theorem \ref{main}(2). 
  We define the closed subgroup, say $\mathsf E_{\nu_\psi}$
 of $AM$, consisting of all $\nu_\psi$-essential values (Definition \ref{def.ess}), and show
 that elements of the generalized length spectrum of  $\Ga$, whose $\psi$-images are sufficiently large, are contained in $\mathsf E_{\nu_\psi}$ (Proposition \ref{prop.E}).
By Proposition \ref{cor.A},
this implies that $AM^\circ$ is contained in $\mathsf E_{\nu_\psi}$, from which we deduce 
Theorem \ref{main}(1), using the $NM$-ergodicity of $m_\psi^{\BR}$.  

\bigskip

\noindent{\bf Acknowledgement} 
We would like to thank Michael Hochman for helpful conversations, especially for telling us about the reference \cite{GS}. We also thank the referee for reading the manuscript carefully and making a useful suggestion.

\section{Preliminaries}\label{sec.pre}
 Let $G$ be a connected semisimple real algebraic group and $\Gamma <G$ be a Zariski dense discrete subgroup. We fix, once and for all, a Cartan involution $\theta$ of the Lie algebra $\mathfrak{g}$ of $G$ and decompose $\fg$ as $\frak g=\frak k\oplus\mathfrak{p}$, where $\fk$ and $\fp$ are the $+ 1$ and $-1$ eigenspaces of $\theta$, respectively. 
We denote by $K$ the maximal compact subgroup of $G$ with Lie algebra $\fk$. We use the notation $o$ for the coset $[K]$ in the associated Riemannian symmetric space $G/K$. We also choose a maximal abelian subalgebra $\fa$ of $\frak p$, and set $A:=\exp \frak a$. 
Choosing a closed positive Weyl chamber $\fa^+$ of $\fa$, we also set $A^+:=\exp \frak a^+$. The centralizer of $A$ in $K$ is denoted by $M$ and we set 
$N$ to be the contracting horospherical subgroup: for  $a\in \inte A^+$,
  $N=\{g\in G: a^{-n} g a^n\to e\text{ as $n\to +\infty$}\}$.
  Note that $\log   N $ is the sum of all positive root subspaces for our choice of $A^+$.
  Similarly, we also consider the expanding horospherical subgroup $N^+$:
for  $a\in \inte A^+$,  $N^+:=\{g\in G: a^n g a^{-n}\to e\text{ as $n\to +\infty$}\}$.
We set $P=MAN$ which is a minimal parabolic subgroup of $G$.
The quotient $\F=G/P$ is known as the Furstenberg boundary of $G$ and is isomorphic to $K/M$.
We let $\La\subset \F$ denote the limit set
of $\Ga$ as defined in \cite{Ben} (see also \cite[Lem. 2.13]{LO} for an equivalent definition), which is known to be the unique $\Gamma$-minimal subset of $\F$.

 \medskip 

We fix an element $w_0$ of the normalizer of $\fa$ such that $\op{Ad}_{w_0}\fa^+= -\fa^+$. 
The opposition involution $\i:\fa \to \fa$ is defined as $\i(u)=-\op{Ad}_{w_0} u$.
\begin{Def}[Visual maps]\rm   For each $g\in G$, we define 
   $$  g^+:=gP\in G/P\quad\text{and}\quad g^-:=gw_0P\in G/P.$$
For all $g\in G$ and $m\in M$, observe that  $g^{\pm}=(gm)^{\pm}=g(e^{\pm})$.
 Let $\F^{(2)}$ denote the unique open $G$-orbit in $\F\times \F$:
$$\F^{(2)}=G(e^+, e^-)=\{(g^+, g^-)\in \cal F\times \cal F: g\in G\}.$$ 
\end{Def}

 We say that $\xi, \eta\in \F$ are in general position
 if $(\xi, \eta)\in \F^{(2)}$.

\subsection{$A$-valued cocycles}
 \begin{Def}\rm The $A$-valued Iwasawa cocycle $
 \sigma^{A}
 : G\times \F \to A$ is defined as follows: for $(g, \xi)\in G\times \F$,
let $ \sigma^A(g,\xi)\in A$ be the unique element satisfying
\be\label{exchange} gk\in K \sigma^A(g, \xi) N\ee
where $k\in K$ is such that $\xi=k^+$.\end{Def}
\begin{Def}\label{Bu0} \rm The $A$-valued Busemann function $\beta^A: \F\times G\times G\to A $ is defined as follows: for $\xi\in \F$ and $g_1, g_2\in G$, set
$$
\beta_\xi^{A} (g_1,g_2):=\sigma^{A} (g_1^{-1},\xi)\,\sigma^{A} (g_2^{-1},\xi)^{-1}.
$$
\end{Def}

\subsection{$AM$-valued cocycles}
The product map $N^+\times P\to G$ is a diffeomorphism onto its image which is Zariski open and dense in $G$. Hence
for each $\xi \in N^+e^+$, we can define $h_\xi\in N^+$ to be the unique element such that \be\label{hxi} \xi=h_\xi e^+.\ee
Similarly, the product map $K\times A\times N\to G$ is a diffeomorphism, giving the Iwasawa decomposition $G=KAN$. 
We can therefore define $k_\xi \in K$ to be  the unique element such that \be\label{kxi} h_\xi \in k_\xi AN .\ee

\begin{definition} [Bruhat cocycle and Iwasawa cocycle] \label{co} Let $g\in G$ and $\xi\in \F$ be such that
$\xi, g\xi \in N^+e^+$.
\begin{enumerate}
\item We define the Bruhat cocycle $b(g,\xi)\in AM$ to be the unique element satisfying
$$ g h_\xi \in N^+  b(g,\xi) N .$$
Note that the condition $\xi\in N^+e^+$ allows us to get $h_\xi\in N^+$ and the condition $g\xi\in N^+e^+$ implies
$gh_\xi\in N^+AMN$.
\item  We define the Iwasawa cocycle  $\sigma^{AM}(g,\xi)\in AM$ to be the unique element satisfying
$$
g k_\xi\in k_{g\xi}\sigma^{AM}(g,\xi)N.
$$
\end{enumerate}
Note that $gh_\xi\in h_{g\xi} b(g, \xi) N$.
\end{definition}

We remark that although $\log \sigma^A(g, \xi)$ was defined as the Iwasawa cocycle in \cite{LO}, we find it more convenient to use the above notation in this paper.
In order to define the $AM$-valued Iwasawa cocycle, it is necessary to choose a Borel section of the
projection $K\simeq G/AN\to K/M\simeq G/P$. In the above definition, we have used a section $\mathsf s:G/P\to G/AN$
given by $\mathsf s (hP)= hAN$ for all $h\in N^+$, so that it is continuous on $N^+e^+\subset \F$. It follows that for each fixed $g\in G$, the maps $\xi \mapsto b(g,\xi)$ and $\xi\mapsto \sigma^{AM}(g,\xi)$  are continuous
on the set $\{\xi\in N^+e^+: g\xi \in N^+e^+\}$.

\begin{definition} [$AM$-valued Busemann map] \label{def.Bu}
For $(\xi, g_1, g_2)\in \cal F \times G\times G$ such that $\xi, g_1^{-1}\xi, g_2^{-1}\xi\in N^+e^+$, we define 
$$
\beta_\xi^{AM} (g_1,g_2):=\sigma^{AM} (g_1^{-1},\xi)\sigma^{AM} (g_2^{-1},\xi)^{-1}.
$$
\end{definition}

\noindent

%

\begin{remark}\label{rmk.WD}
For fixed $g_1, g_2\in G$, the map $\xi\mapsto 
\beta_\xi^{AM}
(g_1, g_2)$ is 
continuous on the set $\{\xi\in N^+e^+:g_1^{-1}\xi, g_2^{-1}\xi\in N^+e^+\}$.
\end{remark}
We  have the following whenever both sides are defined: for any $g_1,g_2, g_3 \in G$ and $\xi\in \F$,
\begin{enumerate}
\item (cocycle identity)
$
\beta_{\xi}^{AM}
(g_1 ,g_3)=
\beta_{\xi}^{AM}
(g_1 ,g_2 )\,
\beta_{\xi}^{AM}
(g_2 ,g_3)$; 
\item (equivariance)
$
\beta_{g_3\xi}^{AM}
(g_3g_1 ,g_3g_2)=
\beta_{\xi}^{AM}
(g_1 ,g_2)$.
\end{enumerate}
We define $\beta^M$ to be the projection of $\beta^{AM}$ to $M$; we then have $\beta^{AM}_\xi(g_1, g_2)=\beta_\xi^A(g_1, g_2)\beta_\xi^M(g_1, g_2)$.
It is simple to check the following:
\begin{example} \label{ex} If $g=ham n\in N^+AMN$, then
$\beta_{g^+}^M(e,g)=m$.
\end{example}
\subsection{Jordan projection and Cartan projection} Recall that for any loxodromic element $g\in G$, there exists $\varphi\in G$ such that
$$
g=\varphi am\varphi^{-1}
$$
for some element $am\in \op{int}A^+M$.
Moreover such $\varphi$  belongs to a unique coset in $G/AM$.
We set
$$y_g:=\varphi^+\in \F$$
which is called the attracting fixed point of $g$.
The element $a\in \op{int}A^+$ is uniquely determined and  called the Jordan projection of $g$. We denote it by $\la(g)$. For a general element $g\in G$, $g$ can be written as a
commuting product $g_hg_ug_e$ where $g_h$, $g_u$ and $g_e$ are hyperbolic, unipotent and elliptic respectively. The hyperbolic element $g_h$ belongs to $AM$ up to conjugation, and the Jordan projection $\lambda(g)$ of $g$ is defined as the unique element of $\fa^+$  such that $g_h\in \varphi \exp \lambda(g) m \varphi^{-1}$
for some $\varphi\in G$ and $m\in M$.

\begin{Def} \label{lc} The limit cone $\cal L_\Ga\subset \fa^+$ is defined as  the smallest closed cone containing
all $\la(\gamma)\in \fa^+$, $\gamma\in \Ga$. \end{Def}
This is known to be a convex cone with non-empty interior \cite{Ben}.

\begin{Def} [Cartan projection]\label{Cartan} \rm
For each $g\in G$, there exists a unique element $\mu(g)\in \frak a^+$, called the Cartan projection of $g$, such that
\begin{equation*}
g\in K\exp(\mu(g))K.
\end{equation*}
\end{Def}

\section{Generalized length spectrum}

In this section, we fix a discrete Zariski dense subgroup $\Ga$ of $G$.

\subsection{$\pc$-minimal subsets of $\G\ba G$.}\label{defyy}
Since $\La$  is the unique $\Ga$-minimal subset of $\F$, it follows that the set 
\begin{equation}\label{eq.calE}
\cal E:=\{[g]\in \G\ba G: g^+\in \La\}    
\end{equation}
is the unique $P$-minimal subset of $\G\ba G$.
We refer to \cite[Thm.~2 and Thm.~1.9]{GR} for results in this subsection.  Set $\fc=G/\pc$. For any $g\in G$ with $g^+\in \La$, the closure of $\Gamma g [\pc ]$ is a $\Ga$-minimal subset of $\fc$.
Moreover the following closed subgroup of $M$ is well-defined:
\be\label{mg} M_\G:=\{m\in M: \La_0 m=\La_0\}\ee
for any $\G$-minimal subset $\La_0$ of $\fc$.
The subgroup $\mc$ is a co-abelian subgroup of $M$ and $M_\Ga/\mc$ is isomorphic to
$(\z/2\z)^{p}$ for some $0\le p\le \text{dim} A$.

For any $\Ga$-minimal subset $\La_0$ of $\F_0$,
the map $s\mapsto \La_0s$ gives a bijection between $M_\Ga\ba M$ and the collection $\cal Y_\Ga$ of all $\Ga$-minimal subsets of $\fc$.
If we set $\tilde \La:=\{g\pc\in \fc : gP\in \La\}$,
then $$\tilde \La=\bigsqcup_{\La_0\in \cal Y_\Ga}\La_0.$$

These results can be translated into statements about $\pc$-minimal subsets of $\Ga\ba G$ by duality.
Each $\La_0\in \cal Y_\Ga$ is of the form $E(\La_0)  /\pc$ for some left $\Gamma$-invariant and right 
$\pc$-invariant closed subset $E(\La_0)$ of $G$. The map $\La_0\mapsto \Ga\ba E(\La_0)$
gives a bijection between $\cal Y_\Ga$ and the collection of all $\pc$-minimal subsets of $\Ga\ba G$, say $\mathfrak Y_\Ga$.
Moreover, if we set \be\label{pg} P_\Ga:=M_\Ga AN,\ee then
$P_\Ga=\{p\in P: \cal E_0 p=\cal E_0\}$ for all $\cal E_0\in \mathfrak Y_\Ga$. We also have
$$\cal E=\bigsqcup_{\cal E_0\in \mathfrak Y_\Ga} \cal E_0.$$

 We remark that each $\pc$-minimal subset of $\Ga\ba G$ is in fact $AN$-minimal; this follows from \cite[Thm.~2]{GR}.
\subsection{Generalized length spectrum}
We  define
\begin{equation}\label{eq.GS}
\Ga^\star:=\{\ga\in \Ga : \text{there exists }\varphi\in N^+N \text{ with } \ga\in  \varphi  (\op{int}A^+M )\varphi^{-1}\}.
\end{equation}
Note that if $\ga\in\Ga$
is loxodromic 
and $y_{\ga}\in N^+e^+$, then $\ga\in\Ga^\star$.
As $\Gamma$ is Zariski dense, the set of loxodromic elements of $\Ga$ is Zariski dense in $G$ \cite{Ben}.
It follows that $\Ga^\star$ is Zariski dense in $G$ as well.

\begin{Def}\label{genJo} For $\ga\in\Ga^\star$, we define its {\it generalized Jordan projection} $\hat\la(\ga)$ to be the unique element of $\op{int}A^+M$
such that $$\ga=\varphi\hat \la(\ga)\varphi^{-1}\quad\text{for some $\varphi\in N^+N $.}$$ 
\end{Def}
\begin{Def} \label{spec} We call the following set the {\it generalized length spectrum} of $\Ga$:
$$
\hat\la(\Ga):=\{\hat\la(\ga)\in AM:\ga\in\Ga^\star\}.
$$

We denote by $$\mathsf s (\Gamma)$$ the closed subgroup of $AM$ generated by $\hat\la(\Ga)$.
\end{Def}
We refer to Remark \ref{choice} for the independence of $\mathsf s (\Gamma)$ on some choices.

\begin{lemma}\label{lem.Jd}
For all $\ga\in\Ga^\star$, we have 
$$\hat\la(\ga)=b(\ga,y_{\ga})=\beta^{AM}_{y_{\ga}}(e,\ga).$$
\end{lemma}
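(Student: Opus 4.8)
The plan is to unwind the definitions of the Bruhat cocycle and the $AM$-valued Busemann map and check that they agree with the generalized Jordan projection of a loxodromic element.

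First I would handle the equality $\hat\la(\ga)=b(\ga,y_\ga)$. By Definition \ref{genJo}, for $\ga\in\Ga^\star$ we may write $\ga=\varphi\hat\la(\ga)\varphi^{-1}$ with $\varphi\in N^+N$ and $\hat\la(\ga)\in\inte A^+M$, and then $y_\ga=\varphi^+=\varphi e^+$. Writing $\varphi=h n$ with $h\in N^+$ and $n\in N$, we have $y_\ga=h e^+$ (since $N$ fixes $e^+$), so by the uniqueness in \eqref{hxi} we get $h_{y_\ga}=h$. Now compute $\ga h_{y_\ga}=\ga h=\varphi\hat\la(\ga)\varphi^{-1}h=h n\hat\la(\ga)n^{-1}h^{-1}h = h\,(n\hat\la(\ga)n^{-1})\in h N \hat\la(\ga) N \cdot(\text{lower-order }N\text{ terms})$; more cleanly, $\ga h_{y_\ga} = h n \hat\la(\ga) n^{-1} \in N^+\,\hat\la(\ga)\,N$ because $n\in N$, $\hat\la(\ga)\in AM$ normalizes $N$, and $h\in N^+$. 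Comparing with the defining relation $\ga h_{y_\ga}\in N^+ b(\ga,y_\ga)N$ and using the uniqueness of the Bruhat decomposition in $N^+ AM N$ (which applies since $\ga y_\ga=y_\ga\in N^+e^+$, as $y_\ga$ is $\ga$-fixed), we conclude $b(\ga,y_\ga)=\hat\la(\ga)$.

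Next, for the equality $b(\ga,y_\ga)=\beta^{AM}_{y_\ga}(e,\ga)$, I would invoke Example \ref{ex}. Since $\ga=\varphi\hat\la(\ga)\varphi^{-1}=hn\hat\la(\ga)n^{-1}h^{-1}$, and writing $\hat\la(\ga)=am$ with $a\in\inte A^+$, $m\in M$, we see that $\ga$ itself is of the form $h'\,a\,m\,n'$ for suitable $h'\in N^+$, $n'\in N$ — that is, $\ga\in N^+AMN$ with $M$-component $m=\hat\la(\ga)$'s $M$-part. But I need the full $AM$ part, so instead I would directly relate $\beta^{AM}_{y_\ga}(e,\ga)=\sigma^{AM}(e,y_\ga)\sigma^{AM}(\ga^{-1},y_\ga)^{-1}=\sigma^{AM}(\ga^{-1},y_\ga)^{-1}$ (using $\sigma^{AM}(e,\cdot)=e$) to the Bruhat cocycle. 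By equivariance of $\beta^{AM}$ and the fact that $y_\ga$ is the attracting fixed point, $\beta^{AM}_{y_\ga}(e,\ga)=\beta^{AM}_{\ga y_\ga}(e,\ga)=\beta^{AM}_{y_\ga}(\ga^{-1},e)^{-1}$; combined with the cocycle identity this pins things down. The cleaner route: establish the general identity $\beta^{AM}_\xi(e,g)=b(g,\xi)$ whenever $\xi,g^{-1}\xi\in N^+e^+$ and $\xi$ is $g$-fixed, by comparing the defining relations $gk_\xi\in k_{g\xi}\sigma^{AM}(g,\xi)^{-1}\cdots$ wait — rather, note $g h_\xi\in h_{g\xi}b(g,\xi)N$ from the displayed remark after Definition \ref{co}, and $h_\xi\in k_\xi AN$, so conjugating by the $AN$-ambiguity (which is absorbed on the right) shows $\sigma^{AM}$ and $b$ differ by the $AM$-components of the $k_\xi\mapsto h_\xi$ change of section, which cancel in the Busemann difference when $\xi=g\xi$.

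The main obstacle I anticipate is bookkeeping the discrepancy between the two sections used to define $b$ (via $h_\xi\in N^+$) and $\sigma^{AM}$ (via $k_\xi\in K$), and verifying that this discrepancy genuinely cancels in $\beta^{AM}_{y_\ga}(e,\ga)$ precisely because $y_\ga$ is a fixed point of $\ga$, so that the "reference frame" at $y_\ga$ is the same on both sides of the Busemann difference. Everything else is a formal manipulation of the Bruhat and Iwasawa decompositions together with the uniqueness statements in Definition \ref{co}; the key conceptual input is just that a loxodromic $\ga$ with $y_\ga\in N^+e^+$ lies in $\Ga^\star$ and its generalized Jordan projection is read off directly from its Bruhat decomposition relative to $h_{y_\ga}$.
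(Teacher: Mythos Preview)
Your argument for $\hat\la(\ga)=b(\ga,y_\ga)$ is correct and identical to the paper's. For the second equality, your ``cleaner route'' intuition (that the $AN$-discrepancy between $h_\xi$ and $k_\xi$ cancels because $\ga y_\ga=y_\ga$) is correct, but you never actually carry it out; the detours through Example~\ref{ex} (which only yields the $M$-component) and the equivariance/cocycle manipulations are dead ends as written.

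The paper replaces all of this with a one-line direct computation. Since $\ga\xi=\xi$, the defining relation for $\beta^{AM}_\xi(e,\ga)$ reduces to $\ga k_\xi\in k_\xi\,\beta^{AM}_\xi(e,\ga)\,N$. Using $h=h_\xi\in k_\xi AN$ (so $k_\xi^{-1}h,\,h^{-1}k_\xi\in AN$), one computes
\[
\ga k_\xi=\varphi\hat\la(\ga)\varphi^{-1}k_\xi
=k_\xi\,(k_\xi^{-1}h)\,n\,\hat\la(\ga)\,n^{-1}\,(h^{-1}k_\xi)
\in k_\xi\,(AN)\,\hat\la(\ga)\,(NA)=k_\xi\,\hat\la(\ga)\,N,
\]
since $AM$ normalizes $N$ and $A$ commutes with $AM$. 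This is precisely the ``section cancellation'' you were reaching for, executed concretely; there is no need to first establish a general formula relating $b$ and $\sigma^{AM}$.
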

\begin{proof}
Since $\ga\in\Ga^\star$, we have $\ga=\varphi  \hat\la(\ga)\varphi^{-1}$ for some $\varphi=hn$, where $h\in N^+$ and $n\in N$.
Set $\xi:=y_{\ga}=\varphi^+$. In particular, $h_{\xi}=h$ and $h\in k_{\xi}AN$.
The defining relations for $b(\ga,\xi)$ and $\beta_{\xi}^{AM}(e,\ga)$ are
$$
\ga h\in h b(\ga,\xi)N \text{ and }\ga k_{\xi}\in k_{\xi}
\beta_{\xi}^{AM}
(e,\ga)N. 
$$
Now observe that
\begin{align*}
\ga h&=\varphi \hat\la (\ga)\varphi^{-1}h=hn \hat\la (\ga)n^{-1}\in h \hat\la(\ga)N \text{ and }\\
\ga k_{\xi}&=\varphi \hat\la (\ga)\varphi^{-1}k_{\xi}=k_{\xi}(k_{\xi}^{-1}h)n \hat\la (\ga)n^{-1}(h^{-1}k_{\xi})\in k_{\xi} \hat\la(\ga)N.
\end{align*}
Therefore $\hat \la(\ga)=b(\ga,\xi)=\beta^{AM}_{\xi}(e,\ga)$.
\end{proof}
For each $\xi\in\La\cap N^+e^+$, we define $b_{\xi}(\Ga)$ to be the closed subgroup of $AM$ generated by
all $b(\ga,\xi) $ where $ \ga\in\Ga $ and $\ga\xi\in N^+e^+ $.

\begin{lemma}\label{lem.xi}
 The subgroup $b_{\xi}(\Ga)<AM$ is independent of $\xi\in \La\cap N^+e^+$.
\end{lemma}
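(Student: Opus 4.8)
The plan is to show that the groups $b_\xi(\Ga)$ and $b_\eta(\Ga)$ coincide for any two points $\xi,\eta\in\La\cap N^+e^+$ by conjugating one into the other inside $AM$, exploiting the cocycle identity and the density of $\Ga^\star$.

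First I would record the relevant transformation rule. Suppose $g\in G$ satisfies $\xi,g\xi\in N^+e^+$, so that $b(g,\xi)$ is defined, and suppose in addition $\eta:=g\xi\in N^+e^+$. For any $\ga\in\Ga$ with $\ga\eta\in N^+e^+$, the cocycle identity for $\beta^{AM}$ gives $\beta^{AM}_\xi(e,g^{-1}\ga g)=\beta^{AM}_\xi(e,g)\,\beta^{AM}_{\xi}(g,\ga g)\,\beta^{AM}_\xi(\ga g, g)^{\,}$, and using the equivariance property $\beta^{AM}_{g\xi}(gx,gy)=\beta^{AM}_{\xi}(x,y)$ together with the relation $gh_\xi\in h_{g\xi}\,b(g,\xi)\,N$ from Definition \ref{co}, one obtains the conjugation formula
$$
b\bigl(g^{-1}\ga g,\ \xi\bigr)\;=\;b(g,\xi)^{-1}\, b(\ga,\eta)\, b(g,\xi).
$$
(The point is that $b(\cdot,\cdot)$ and $\beta^{AM}_{(\cdot)}(e,\cdot)$ agree when the second argument is fixed by a conjugate written over $N^+N$, exactly as in the proof of Lemma \ref{lem.Jd}, and more generally $b$ transforms like the Busemann cocycle.) I would then want $g$ to normalize $\Ga$; since that is too much to ask, instead I choose $g\in\Ga^\star$ itself. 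For $\ga_0\in\Ga^\star$ with attracting fixed point $y_{\ga_0}\in N^+e^+$, conjugation by $\ga_0$ preserves $\Ga$, and the displayed identity shows that conjugating the generating set $\{b(\ga,\eta):\ga\in\Ga,\ \ga\eta\in N^+e^+\}$ for $b_\eta(\Ga)$ by $b(\ga_0,\eta)\in AM$ lands inside (in fact equals, by symmetry) the generating set for $b_{\eta'}(\Ga)$ where $\eta'=\ga_0^{-1}\eta$. Hence $b_\eta(\Ga)$ and $b_{\ga_0^{-1}\eta}(\Ga)$ are conjugate subgroups of $AM$; to upgrade ``conjugate'' to ``equal'' I observe that both contain $\hat\la(\ga)=b(\ga,y_\ga)$ for all $\ga\in\Ga^\star$ with $y_\ga\in N^+e^+$ in a suitable sense — more precisely, $b_\xi(\Ga)\supseteq \mathsf s(\Ga)$ always, and the conjugating element $b(\ga_0,\eta)$ can be absorbed because it differs from $\hat\la(\ga_0)\in b_\eta(\Ga)$ by an element that is itself expressible via $b$-cocycles.

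The remaining issue is connectivity of the ``orbit'' of admissible base points: I must pass from $\xi$ to an arbitrary $\eta$, not just to $\Ga^\star$-translates of $\xi$. Here I would use that $\La$ is the unique $\Ga$-minimal subset of $\F$, so the $\Ga$-orbit of any $\xi\in\La$ is dense in $\La$; combined with the fact that $N^+e^+$ is Zariski open and dense and that $b_\xi(\Ga)$ depends on $\xi$ only through which generators $b(\ga,\xi)$ are defined, a continuity/limiting argument (using Remark \ref{rmk.WD}, the continuity of $\xi\mapsto b(\ga,\xi)$ on $\{\xi\in N^+e^+:\ga\xi\in N^+e^+\}$) shows the closed group $b_\xi(\Ga)$ is locally constant, hence constant, on the connected pieces, and then the minimality of $\La$ glues everything: for any $\xi,\eta\in\La\cap N^+e^+$ there is $\ga\in\Ga$ with $\ga\xi$ close to $\eta$, and the transformation formula plus continuity force $b_\xi(\Ga)=b_\eta(\Ga)$.

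I expect the main obstacle to be the bookkeeping in the conjugation formula — specifically verifying cleanly that $b(g^{-1}\ga g,\xi)=b(g,\xi)^{-1}b(\ga,g\xi)b(g,\xi)$ holds whenever all three quantities are defined, since $b$ is only a cocycle on the partially-defined domain $\{\xi\in N^+e^+:\ga\xi\in N^+e^+\}$ and one has to check that the intermediate expressions stay inside $N^+AMN$. Once that algebraic identity is in hand together with the density of $\Ga^\star$ and minimality of $\La$, the conclusion that $b_\xi(\Ga)$ is independent of $\xi\in\La\cap N^+e^+$ follows formally.
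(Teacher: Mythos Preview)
Your proposal has a genuine error and an unnecessary detour.

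\textbf{The conjugation formula is false.} You claim
\[
b(g^{-1}\ga g,\xi)=b(g,\xi)^{-1}\,b(\ga,g\xi)\,b(g,\xi),
\]
but the cocycle identity actually gives
\[
b(g^{-1}\ga g,\xi)=b(g^{-1},\ga g\xi)\,b(\ga,g\xi)\,b(g,\xi),
\]
and $b(g^{-1},\ga g\xi)$ equals $b(g,\xi)^{-1}=b(g^{-1},g\xi)$ only when $\ga g\xi=g\xi$, which is not the generic situation. So the displayed identity does not hold, and the whole ``conjugate, then upgrade conjugate to equal'' scheme collapses. Separately, your appeal to $b_\xi(\Ga)\supseteq \mathsf s(\Ga)$ is circular: that containment is Lemma~\ref{lem.spec}, whose proof \emph{uses} Lemma~\ref{lem.xi}.

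\textbf{The paper's argument is much simpler and avoids all of this.} Given $\xi_1,\xi_2\in\La\cap N^+e^+$ and $\ga\in\Ga$ with $\ga\xi_2\in N^+e^+$, use minimality of $\La$ to pick $\ga_n\in\Ga$ with $\ga_n\xi_1\to\xi_2$. For large $n$ both $\ga_n\xi_1$ and $\ga\ga_n\xi_1$ lie in the open set $N^+e^+$, and the cocycle identity in its plain form gives
\[
b(\ga,\ga_n\xi_1)=b(\ga\ga_n,\xi_1)\,b(\ga_n,\xi_1)^{-1}\in b_{\xi_1}(\Ga).
\]
Passing to the limit using continuity of $\xi\mapsto b(\ga,\xi)$ (Remark~\ref{rmk.WD}) and closedness of $b_{\xi_1}(\Ga)$ yields $b(\ga,\xi_2)\in b_{\xi_1}(\Ga)$, hence $b_{\xi_2}(\Ga)\subset b_{\xi_1}(\Ga)$; the reverse inclusion is symmetric. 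No conjugation, no $\Ga^\star$, no $\mathsf s(\Ga)$, no ``locally constant'' claims are needed. The ingredients you identified (cocycle identity, minimality, continuity) are exactly right---you just need to apply the cocycle identity directly rather than trying to extract a conjugation relation from it.
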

\begin{proof}
Let $\xi_1,\xi_2\in \La\cap N^+e^+$.
To show that $b_{\xi_1}(\Ga)= b_{\xi_2}(\Ga)$, it suffices to check that $b(\ga,\xi_2)\in b_{\xi_1}(\Ga)$ for 
any $\ga\in\Ga$ such that $\ga\xi_2\in N^+e^+$. Since $\La$ is $\Ga$-minimal, there exists a sequence $\ga_n\in\Ga$ such that $\lim_{n\to
\infty} \ga_n\xi_1 =\xi_2$.
Since $N^+e^+$ is open and $\xi_2, \ga \xi_2\in N^+e^+$, , we have $\ga_n\xi_1, \ga\ga_n\xi_1\in N^+e^+$ for all large $n$ and $b(\ga\ga_n,\xi_1)=b(\ga,\ga_n\xi_1)b(\ga_n,\xi_1)$.
Hence
$$
b(\ga,\xi_2)=\lim\limits_{n\to\infty}b(\ga,\ga_n\xi_1)=\lim\limits_{n\to\infty} b(\ga\ga_n,\xi_1)b(\ga_n,\xi_1)^{-1}\in b_{\xi_1}(\Ga),
$$
from which the lemma follows.
\end{proof}

By Lemma \ref{lem.xi}, we may define
$$
b(\Ga):=b_\xi (\Ga)\quad\text{for any $\xi\in \La\cap N^+e^+$}.    
$$
In the rest of this section, we assume that 
$$\Ga\cap \inte A^+ M \ne 
\emptyset
.$$

\begin{lemma}\label{lem.spec}
We have $b(\Ga)=\mathsf s (\Gamma).$
\end{lemma}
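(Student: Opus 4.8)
The plan is to prove the two inclusions $b(\Ga) \subseteq \mathsf s(\Gamma)$ and $\mathsf s(\Gamma) \subseteq b(\Ga)$ separately. For the second inclusion, recall that under the standing assumption $\Ga \cap \inte A^+M \ne \emptyset$, the set $\Ga^\star$ consists of those $\ga \in \Ga$ conjugate into $\inte A^+M$ by an element of $N^+N$, and its generalized Jordan projections generate $\mathsf s(\Gamma)$. Given $\ga \in \Ga^\star$, write $\ga = \varphi \hat\la(\ga)\varphi^{-1}$ with $\varphi = hn \in N^+N$ and set $\xi := y_\ga = \varphi^+ \in \La \cap N^+e^+$. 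Then $\ga\xi = \xi \in N^+e^+$, and by Lemma \ref{lem.Jd} we have $\hat\la(\ga) = b(\ga,\xi) \in b_\xi(\Ga) = b(\Ga)$. Since this holds for every $\ga \in \Ga^\star$ and $b(\Ga)$ is a closed subgroup, it contains the closed subgroup generated by $\hat\la(\Ga)$, i.e.\ $\mathsf s(\Gamma) \subseteq b(\Ga)$.

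For the first inclusion, fix $\xi \in \La \cap N^+e^+$ and let $\ga \in \Ga$ with $\ga\xi \in N^+e^+$; we must show $b(\ga,\xi) \in \mathsf s(\Gamma)$. The idea is to realize $b(\ga,\xi)$ as a limit of generalized Jordan projections. Using that $\Ga^\star$ is Zariski dense, hence in particular that loxodromic elements of $\Ga$ with attracting fixed point arbitrarily close to $\xi$ exist (by $\Ga$-minimality of $\La$, pushing a fixed loxodromic element's attracting point toward $\xi$ by elements of $\Ga$, and using that $y_{\ga_n \ga_0 \ga_n^{-1}} = \ga_n y_{\ga_0}$), I would produce a sequence $\eta_n \in \Ga^\star$ with $y_{\eta_n} \to \xi$ and $y_{\eta_n} \in N^+e^+$. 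Then consider the elements $\ga\eta_n \in \Ga$: for $n$ large, $y_{\eta_n}$ and $\ga y_{\eta_n}$ both lie in the open set $N^+e^+$, so by the cocycle identity $b(\ga\eta_n, y_{\eta_n}) = b(\ga, \eta_n y_{\eta_n}) \, b(\eta_n, y_{\eta_n}) = b(\ga, \ga y_{\eta_n})^{-1}$... more precisely one uses $b(\ga, y_{\eta_n}) = b(\ga\eta_n, y_{\eta_n}) \, b(\eta_n, y_{\eta_n})^{-1}$, exactly as in the proof of Lemma \ref{lem.xi}. Now $b(\eta_n, y_{\eta_n}) = \hat\la(\eta_n) \in \mathsf s(\Gamma)$ by Lemma \ref{lem.Jd}. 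The remaining issue is the factor $b(\ga\eta_n, y_{\eta_n})$: I would like each $\ga\eta_n$ to lie in $\Ga^\star$ with $b(\ga\eta_n, y_{\eta_n})$ equal to $\hat\la(\ga\eta_n)$, which requires $y_{\ga\eta_n} = y_{\eta_n}$; since $\ga\eta_n$ need not be loxodromic with that fixed point, instead I would argue that $b(\ga\eta_n, y_{\eta_n})$ lies in the closed set $\mathsf s(\Gamma)$ by a separate density/approximation argument — e.g.\ perturbing $\ga\eta_n$ within $\Ga^\star$ or using that $\ga\eta_n$ times a suitable high power of a loxodromic element in $\Ga^\star$ fixing a point near $\xi$ becomes loxodromic with attracting point near $\xi$, so that its generalized Jordan projection approximates $b(\ga, y_{\eta_n})\hat\la(\cdot)$. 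Passing to the limit using continuity of $\xi \mapsto b(\ga,\xi)$ on $\{\xi \in N^+e^+ : \ga\xi \in N^+e^+\}$ (Remark following Definition \ref{co}) then gives $b(\ga,\xi) \in \mathsf s(\Gamma)$.

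The main obstacle is the control of the "error" factor $b(\ga\eta_n, y_{\eta_n})$ in the first inclusion: a priori $\ga\eta_n$ is an arbitrary element of $\Ga$, not in $\Ga^\star$, so one cannot directly identify its Bruhat cocycle with a generalized Jordan projection. The clean way around this is the classical trick of multiplying by a large power $\eta_n^k$ of a loxodromic element whose attracting and repelling fixed points are near $\xi$ and away from the relevant bad loci: for $k$ large, $\ga\eta_n^k$ is loxodromic with attracting fixed point close to $y_{\eta_n}$ and lies in $\Ga^\star$, and its generalized Jordan projection is close to $b(\ga, y_{\eta_n})\hat\la(\eta_n)^k$; combining this with $\hat\la(\eta_n)^k \in \mathsf s(\Gamma)$ and closedness of $\mathsf s(\Gamma)$ isolates $b(\ga, y_{\eta_n})$, hence $b(\ga,\xi)$, inside $\mathsf s(\Gamma)$. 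I expect the verification that $\ga\eta_n^k \in \Ga^\star$ with the asserted fixed point and Jordan projection estimates — essentially a north-south dynamics argument on $\F$ together with the cocycle identity — to be the technical heart of the argument, while everything else is a routine application of Lemmas \ref{lem.Jd}, \ref{lem.xi} and the continuity of the Bruhat cocycle.
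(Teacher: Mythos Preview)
Your argument for $\mathsf s(\Gamma)\subseteq b(\Ga)$ is correct and is exactly the paper's: $\hat\la(\ga)=b(\ga,y_\ga)\in b_{y_\ga}(\Ga)=b(\Ga)$ via Lemmas~\ref{lem.Jd} and~\ref{lem.xi}.

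For $b(\Ga)\subseteq\mathsf s(\Gamma)$, however, you overlook the standing hypothesis $\Ga\cap\inte A^+M\neq\emptyset$, which is precisely what dissolves the ``obstacle'' you identify. The paper picks $s_0\in\Ga\cap\inte A^+M$ and, using Lemma~\ref{lem.xi}, works at the single base point $\xi=e^+\in\La\cap N^+e^+$. For $\ga\in\Ga$ with $\ga e^+\in N^+e^+$, one considers $s_0^n\ga$: since $\ga e^+$ is in general position with $e^-$, for large $n$ this lies in $\Ga^\star$ with $x_n:=y_{s_0^n\ga}\to e^+$. The key point is that $s_0\in AM$ normalizes $N^+$, so $b(s_0^n,\zeta)=s_0^n=\hat\la(s_0)^n$ for \emph{every} $\zeta\in N^+e^+$; there is no error factor to control. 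Then
\[
b(\ga,e^+)=\lim_n b(\ga,x_n)=\lim_n b(s_0^n,\ga x_n)^{-1}b(s_0^n\ga,x_n)=\lim_n \hat\la(s_0)^{-n}\hat\la(s_0^n\ga)\in\mathsf s(\Gamma).
\]
Your proposed route via general $\eta_n\in\Ga^\star$ and powers $\ga\eta_n^k$ also contains an error as stated: by north--south dynamics the attracting fixed point of $\ga\eta_n^k$ tends to $\ga y_{\eta_n}$, not to $y_{\eta_n}$. Since $\hat\la(\ga\eta_n^k)=b(\ga\eta_n^k,y_{\ga\eta_n^k})$ is evaluated near $\ga y_{\eta_n}$ while $b(\ga,y_{\eta_n})\hat\la(\eta_n)^k=b(\ga\eta_n^k,y_{\eta_n})$ is evaluated at $y_{\eta_n}$, your asserted estimate $\hat\la(\ga\eta_n^k)\approx b(\ga,y_{\eta_n})\hat\la(\eta_n)^k$ does not follow, and you are left with an additional factor $b(\eta_n^k,\,\cdot\,)$ at a point \emph{other than} the fixed point of $\eta_n$---exactly the difficulty the choice $s_0\in AM$ is designed to avoid.
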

\begin{proof} 
We first claim that $b(\Ga)\subset \mathsf s (\Gamma)$.
By Lemma \ref{lem.xi}, it suffices to show that $b(\ga,e^+)\in \mathsf s (\Gamma)$ for any $\ga\in\Ga$ with $\ga e^+\in N^+e^+$.
Set $s_0:=a_0m_0\in \Ga\cap \op{int} A^+M
$.
Since $\gamma e^+$ and $e^-$ are in general position,
 for all sufficiently large $n$, $s_0^n\ga$ is a loxodromic element and $x_n:=y_{s_0^n\ga}$ converges to $e^+$ as $n\to\infty$.
Since $y_{s_0^n\ga}\in N^+e^+$, we have $s_0^n\ga\in\Ga^\star$ for all large $n$.
Now the claim follows from
\begin{align*}
b(\ga,e^+)&=\lim\limits_{n\to\infty}b(\ga,x_n)=\lim\limits_{n\to\infty} b(s_0^n,\ga x_n)^{-1}b(s_0^n\ga,x_n)\\
&=\lim\limits_{n\to\infty}\hat\la(s_0^n)^{-1}\hat\la(s_0^n\ga)\in\mathsf s (\Gamma)
\end{align*}
We next claim $\mathsf s (\Gamma)\subset b(\Ga)$.
Let $\ga\in\Ga^\star$ be arbitrary.
Note that $y_{\ga}\in N^+e^+$.
By Lemma \ref{lem.Jd}, $\hat\la(\ga)=b(\ga,y_{\ga})\in b_{y_{\ga}}(\Ga)$.
Since $b(\Ga)=b_{y_{\ga}}(\Ga)$ by Lemma \ref{lem.xi}, we have $\hat\la(\ga)\in b(\Ga)$, proving the claim.
\end{proof}

\begin{proposition}\label{prop.GR}
We have
\begin{enumerate}
\item $b(\Ga)=b(g^{-1}\Ga g)$ for all $g\in G$ with $g^\pm\in\La$;
\item $b(\Ga)$ is a co-abelian subgroup of $AM$ containing $AM^\circ$;
\item $b(\Ga)=AM_\Ga$.
\end{enumerate}
\end{proposition}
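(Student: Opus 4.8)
\textbf{Proof proposal for Proposition \ref{prop.GR}.}

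The plan is to establish the three claims in order, leveraging Lemma \ref{lem.spec} to translate between $b(\Ga)$ and $\mathsf s(\Gamma)$, and then to identify the resulting group with $AM_\Ga$ using the description of $\pc$-minimal sets from Section \ref{defyy}.

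For part (1), I would work with $\mathsf s(\Gamma)$ instead of $b(\Ga)$. Given $g\in G$ with $g^\pm\in\La$, I want to compare generalized Jordan projections of $\Ga$ and of $g^{-1}\Ga g$. The key point is equivariance of the Busemann cocycle (property (2) after Remark \ref{rmk.WD}): for a loxodromic $\ga\in\Ga^\star$ with $y_\ga\in N^+e^+$, the conjugate $g^{-1}\ga g$ is loxodromic with attracting fixed point $g^{-1}y_\ga$, and $\beta^{AM}_{g^{-1}y_\ga}(e, g^{-1}\ga g)$ is conjugate (in $AM$, via an $AM$-valued cocycle term) to $\beta^{AM}_{y_\ga}(e,\ga)=\hat\la(\ga)$. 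Since $AM/[AM,AM]$ is abelian and $b(\Ga)$ is shown in part (2) to be co-abelian, conjugation does not change which coset structure one lands in; more carefully, I would use density of $\Ga^\star$ together with a limiting argument as in Lemma \ref{lem.spec} to reduce to generators $\hat\la(\ga)$ with $y_\ga$ ranging over a dense subset of $\La\cap N^+e^+$, and note that the conjugating elements can be absorbed once we pass to the closed group generated. This gives $\mathsf s(g^{-1}\Ga g)=\mathsf s(\Gamma)$ up to the ambiguity resolved by part (2).

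For part (2), this is where I expect to invoke the external input most heavily; I would cite the Guivarc'h–Raugi structure theory \cite{GR} referenced at the start of Section \ref{defyy}. The subgroup $M_\Ga$ is co-abelian in $M$ and contains $\mc$, with $\mc\supset M^\circ$ (the latter because $M^\circ$ is connected and acts trivially on the space of $\Ga$-minimal subsets of $\fc$, which is discrete). Hence $P_\Ga=M_\Ga AN$ contains $AM^\circ$, and translating through the bijection between $\cal Y_\Ga$ and $\mathfrak Y_\Ga$, the group $b(\Ga)=\mathsf s(\Gamma)$ — being the closed group generated by Busemann cocycle values along $\Ga$ at a point of $\La$ — is precisely the stabilizer of a $\Ga$-minimal subset $\La_0\subset\fc$ inside $AM$, which is $AM_\Ga$; co-abelianness in $AM$ then follows from co-abelianness of $M_\Ga$ in $M$. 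This simultaneously proves (3). The honest content is matching the cocycle-theoretic group $b(\Ga)$ with the dynamically-defined stabilizer $AM_\Ga$: one direction, $b(\ga,\xi)$ preserves $\La_0$, follows because $\ga$ maps $\La_0$ to $\La_0$ and the Bruhat cocycle records exactly the $AM$-part of this action on the $N^+$-coordinate chart; the reverse inclusion uses minimality of $\La_0$ to realize every element of the stabilizer as a limit of cocycle values, exactly as in the proof of Lemma \ref{lem.xi}.

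The main obstacle will be part (1): controlling the conjugation term in the equivariance identity. The subtlety is that $\beta^{AM}$ is only a cocycle where defined (on the $N^+e^+$ chart), so conjugating by a general $g$ moves points out of the chart and the naive equivariance formula needs care; the resolution is to first reduce, via density of $\Ga^\star$ and the limiting trick from Lemma \ref{lem.spec}, to the case where all relevant points lie in $N^+e^+$, and then to observe that since the target group $b(\Ga)$ is co-abelian (part (2)), the $AM$-conjugation ambiguity is invisible at the level of the closed generated subgroup. So parts (2) and (3) should be carried out first, or at least their co-abelianness consequence, and then (1) follows cleanly.
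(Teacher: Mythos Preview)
The paper's proof is almost entirely by citation: claims (1) and (2) are taken directly from \cite[Thm.~1.9]{GR}, with no argument given; claim (3) is then a one-line deduction from (2) together with \cite[Prop.~4.9(a)]{BQ}, which identifies $M_\Ga$ with the closure of $\{m\in M:\Ga\cap N^+AmN\ne\emptyset\}$. Since $b(\Ga)=b_{e^+}(\Ga)$ by Lemma~\ref{lem.xi}, and for $\ga=hamn\in\Ga\cap N^+AMN$ one has $b(\ga,e^+)=am$, the $M$-projection of $b(\Ga)$ is exactly that closure; combined with $A\subset b(\Ga)$ from (2), this gives $b(\Ga)=AM_\Ga$.

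Your approach is genuinely different: you try to deduce (1) from (2), and you try to prove (3) by identifying $b(\Ga)$ with the $AM$-stabilizer of a minimal set $\La_0\subset\fc$. On (1): your idea is essentially correct --- if $\ga=\varphi\hat\la(\ga)\varphi^{-1}$ with $\varphi\in N^+N$, and $g^{-1}\varphi\in N^+N\cdot p$ for some $p\in AM$, then $\hat\la(g^{-1}\ga g)=p\,\hat\la(\ga)\,p^{-1}$; since (2) gives $b(\Ga)\supset[AM,AM]$, the group $b(\Ga)$ is normal in $AM$ and such conjugates stay inside. But you gloss over why the Bruhat decomposition of $g^{-1}\varphi$ exists (it requires $g^{-1}y_\ga\in N^+e^+$, i.e.\ $y_\ga\ne g^-$), and your phrasing ``$AM/[AM,AM]$ is abelian and $b(\Ga)$ is co-abelian'' confuses which implication you need. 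On (3): your stabilizer argument is morally the content of \cite[Prop.~4.9(a)]{BQ} itself, so you are not avoiding the external input, only repackaging it; the paper's route through \cite{BQ} is shorter and avoids having to set up the $\fc$-coordinates carefully.

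In short, your proposal would work with some cleanup, but the paper simply cites \cite{GR} for (1) and (2) outright --- there is nothing to prove --- and uses \cite{BQ} for the short step to (3). Your reordering ``(2)/(3) first, then (1)'' is unnecessary once you realize (1) is already a theorem in \cite{GR}.
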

\begin{proof}
Claims (1) and (2) are proved in \cite[Thm.~1.9]{GR}. Claim (3) follows since $A\subset b(\Ga)$ by (2) and
the closure of $\{m\in M: \G\cap N^+A m N \ne \emptyset\}$ is equal to $M_\Ga$ \cite[Prop. 4.9(a)]{BQ}.
\end{proof}

Hence we deduce the following from Lemma \ref{lem.spec} and Proposition \ref{prop.GR}.
\begin{cor}\label{jordan}
We have  
$$\mathsf s (\Gamma)=AM_\Ga.$$
\end{cor}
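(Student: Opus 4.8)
The plan is to assemble the final identity $\mathsf s(\Gamma)=AM_\Ga$ by chaining together the two results already in hand. First I would invoke Lemma \ref{lem.spec}, which gives $\mathsf s(\Gamma)=b(\Gamma)$ under the standing hypothesis $\Gamma\cap\inte A^+M\ne\emptyset$. Then I would invoke Proposition \ref{prop.GR}(3), which identifies $b(\Gamma)=AM_\Ga$. Concatenating these two equalities yields the corollary immediately, so the body of the proof is essentially the single display
\[
\mathsf s(\Gamma)=b(\Gamma)=AM_\Ga,
\]
with a pointer to where each equality comes from.

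The only thing worth saying beyond that is why the hypotheses line up. Lemma \ref{lem.spec} was proved \emph{in the scope of the assumption} $\Gamma\cap\inte A^+M\ne\emptyset$ (stated just before it), and $b(\Gamma)$ itself is only defined once $\Lambda\cap N^+e^+\ne\emptyset$, which holds for Zariski dense $\Gamma$ since $N^+e^+$ is Zariski open and dense in $\cal F$ while $\Lambda$ is infinite (indeed Zariski dense). Proposition \ref{prop.GR}(3) in turn rests on Proposition \ref{prop.GR}(2), i.e.\ $AM^\circ\subset b(\Gamma)$ from \cite{GR}, together with the description of $M_\Ga$ as the closure of $\{m\in M:\Gamma\cap N^+AmN\ne\emptyset\}$ from \cite[Prop.~4.9(a)]{BQ}; both of these are available, so there is no gap. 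So the proof reduces to quoting Lemma \ref{lem.spec} and Proposition \ref{prop.GR}(3) and observing the standing assumption is exactly the one both require.

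There is essentially no obstacle here — this is a one-line corollary whose entire content is bookkeeping. If anything needs care it is making sure the reader sees that the generalized length spectrum $\hat\la(\Gamma)$, the Bruhat-cocycle group $b(\Gamma)$, and the group $AM_\Ga$ really are the same object and not merely equal up to closure or finite index; but this has already been dealt with inside Lemmas \ref{lem.xi} and \ref{lem.spec} (where limits of products of generalized Jordan projections are shown to land in $b(\Gamma)$, and conversely) and in Proposition \ref{prop.GR}. Thus I would keep the proof to a single sentence: "This is immediate from Lemma \ref{lem.spec} and Proposition \ref{prop.GR}(3)."

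\begin{proof}
By Lemma \ref{lem.spec}, $\mathsf s(\Gamma)=b(\Gamma)$, and by Proposition \ref{prop.GR}(3), $b(\Gamma)=AM_\Ga$; hence $\mathsf s(\Gamma)=AM_\Ga$.
\end{proof}
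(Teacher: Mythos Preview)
Your proof is correct and matches the paper's approach exactly: the paper also derives the corollary directly from Lemma \ref{lem.spec} and Proposition \ref{prop.GR}(3).
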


\begin{Rmk}\label{choice} We mention that as long as $g\in G$ satisfies  $g^\pm\in\La$, we can use $\varphi\in g^{-1}N^+N^-$ and $\xi\in \La\cap g^{-1}N^+e^+$  in defining $\Ga^\star$, $\hat \la(\ga)$ and  $b_{\xi}(\Ga)$, and get the same $\mathsf s(\Ga)=AM_\Ga$ by \cite[Prop. 1.8 and Thm.~1.9]{GR}.
\end{Rmk}

\section{$A$-ergodic decompositions of BMS-measures}
As before, let $\G$ be a discrete Zariski dense subgroup of $G$.

\begin{Def}[Growth indicator function]\label{gi}\rm The growth indicator function $\psi_{\Gamma}\,:\,\fa^+ \rightarrow \br \cup\lbrace- \infty\rbrace$  is defined as follows:
  for any vector $u\in \fa^+$,
\begin{equation*}
\psi_{\Gamma}(u):=\|u\| \cdot \inf_{\underset{u \in\cal C}{\mathrm{open\;cones\;}\cal C\subset \fa^+}}\tau_{\cal C}
\end{equation*}
where $\tau_{\cal C}$ is the abscissa of convergence of the series $\sum_{\ga\in\Ga, \mu(\ga)\in\cal C}e^{-t\norm{\mu(\ga)}}$.
\end{Def}
We consider $\psi_\Ga$ as a function on $\fa$ by setting $\psi_\Ga=-\infty$ outside of $\fa^+$.

For a linear form $\psi\in \fa^*$, a Borel probability measure $\nu$ on $\La$ is called a $(\Ga,\psi)$-Patterson-Sullivan measure if 
 for all $\ga\in \Ga$ and $\xi\in \cal F$,
 \be\label{gc000} \frac{d\ga_* \nu}{d\nu}(\xi) =e^{\psi (\log \beta_\xi^A (e, \gamma ))} .\ee

Set $$   D_\Ga^\star :=
\{\psi\in \fa^*: \psi \ge \psi_\Gamma,  \psi(u)=\psi_\Gamma(u) \text{ for some
$u\in \inte \L_\Ga$}\}.$$
For each linear form $\psi\in \dg$, Quint constructed a $(\Gamma, \psi)$-Patterson-Sullivan measure,  say, $\nu_\psi$  \cite[Thm.~4.10]{Quint2}.
For an Anosov group $\Ga$, it was shown in \cite[Thm.~1.3]{LO} that the map $\psi \mapsto \nu_\psi$ is a homeomorphism between
$\dg$ and the space of all $\Gamma$ Patterson-Sullivan measures.

\subsection{Antipodality of $\Gamma$}
When  $\Gamma$ is Anosov, we have the following so-called antipodal property from its definition: 
\be\label{ant2} \{(\xi, \eta)\in \La\times \La: \xi\ne \eta\}\subset\F^{(2)}.\ee 
\begin{lem} \label{ant} Let $\Ga$ be Anosov. 
If $g\in G$ satisfies $g^-\in \La$, then $g^{-1}\La\subset N^+e^+\cup \{e^-\}$.
\end{lem}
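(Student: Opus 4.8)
The statement to prove is: if $\Ga$ is Anosov and $g \in G$ satisfies $g^- \in \La$, then $g^{-1}\La \subset N^+e^+ \cup \{e^-\}$. The plan is to exploit the antipodality \eqref{ant2} together with a description of which points of $\F$ fail to lie in $N^+e^+$. The key observation is that, because the product map $N^+ \times P \to G$ is a diffeomorphism onto a Zariski open dense set, the orbit $N^+e^+$ is precisely the ``big Bruhat cell,'' and its complement in $\F$ is exactly the set of points $\eta \in \F$ that are \emph{not} in general position with $e^-$; equivalently, $\eta \in N^+e^+$ if and only if $(\eta, e^-) \in \F^{(2)}$. This is a standard fact about the Bruhat decomposition $\F = \bigsqcup_{w} N^+ w e^+$ relative to the opposite horospherical $N^+$, where the open cell $N^+ e^+$ consists of all flags transverse to the flag $e^- = w_0 e^+$.

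Granting that characterization, the argument is short. Let $\eta \in g^{-1}\La$, so $g\eta \in \La$. Since $g^- = g w_0 P \in \La$ as well, antipodality \eqref{ant2} gives that either $g\eta = g^-$, or $(g\eta, g^-) \in \F^{(2)}$. In the first case, applying $g^{-1}$ yields $\eta = w_0 P = e^-$. In the second case, by $G$-equivariance of $\F^{(2)}$ (it is a single $G$-orbit), applying $g^{-1}$ gives $(\eta, e^-) \in \F^{(2)}$, i.e.\ $\eta$ and $e^-$ are in general position, hence $\eta \in N^+e^+$ by the characterization of the big cell. Combining the two cases, $\eta \in N^+ e^+ \cup \{e^-\}$, which is what we wanted.

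The only real content is verifying the claim that $N^+e^+ = \{\eta \in \F : (\eta, e^-) \in \F^{(2)}\}$. First I would note the inclusion $\subseteq$: for $\eta = h e^+$ with $h \in N^+$, we have $(\eta, e^-) = (h e^+, e^-) = (h e^+, h e^-)$ because $h \in N^+$ fixes $e^- = w_0 P$ (indeed $N^+$ is the unipotent radical of the opposite parabolic $w_0 P w_0^{-1}$, which stabilizes $e^-$), and $(h e^+, h e^-) = h\cdot(e^+, e^-) \in \F^{(2)}$ since $(e^+, e^-) \in \F^{(2)}$ by definition. For the reverse inclusion $\supseteq$: if $(\eta, e^-) \in \F^{(2)} = G(e^+, e^-)$, write $(\eta, e^-) = g_0 (e^+, e^-)$; then $g_0$ fixes $e^-$, so $g_0 \in w_0 P w_0^{-1} = M A N^+$, and writing $g_0 = h p$ with $h \in N^+$, $p \in MA N^+ \cap P = MA$ (wait — more carefully, $g_0 = man h$ or we decompose the stabilizer of $e^-$), we get $\eta = g_0 e^+ = h e^+ \in N^+ e^+$ since the $MAN^+$-part beyond $N^+$ fixes $e^+$ only if it lies in $P$; one checks $w_0 P w_0^{-1} \cap P = MA$, so $g_0 e^+ \in N^+ e^+$. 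I would phrase this last step via the Bruhat decomposition to be safe. This verification is the main obstacle in the sense that it requires care with the two opposite parabolics, but it is entirely standard structure theory; no analytic input beyond antipodality \eqref{ant2} is needed.
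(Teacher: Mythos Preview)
Your proposal is correct and follows essentially the same approach as the paper: use antipodality \eqref{ant2} to get $(g^{-1}\xi, e^-)\in\F^{(2)}$ whenever $g^{-1}\xi\ne e^-$, and then invoke the identification $N^+e^+=\{\eta\in\F:(\eta,e^-)\in\F^{(2)}\}$. The paper simply asserts this last identification, whereas you spell out a verification via the Bruhat decomposition; your reverse inclusion is a bit tangled but the idea (the stabilizer of $e^-$ is $w_0Pw_0^{-1}=N^+MA$, and $MA$ fixes $e^+$) is exactly right.
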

\begin{proof} 
Suppose that  $\xi\in \La$ and $g^{-1}\xi\ne e^-$. Then $\xi\ne g^-$ in $\La$. Hence by \eqref{ant2}, $(\xi, g^-)\in \cal F^{(2)}$, or equivalently,
$(g^{-1}\xi, e^-)\in \cal F^{(2)}$. Since $\{\eta\in \F: (\eta, e^-)\in \F^{(2)}\}=
N^+e^+$,
$g^{-1}\xi\in N^+e^+$, proving the claim.
\end{proof}

\begin{cor}\label{full1} 
Let  $\psi\in \dg$. For any  $g\in G$ with $g^{\pm}\in \La$,
$$\nu_\psi(\La\cap gN^+e^+)=1.$$
\end{cor}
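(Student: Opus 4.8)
The plan is to derive this from Lemma~\ref{ant} together with the $(\Ga,\psi)$-equivariance of the Patterson--Sullivan measure $\nu_\psi$, exactly as one does for the analogous statements in rank one. First I would observe that since $g^-\in\La$, Lemma~\ref{ant} gives $g^{-1}\La\subseteq N^+e^+\cup\{e^-\}$; equivalently $\La\subseteq gN^+e^+\cup\{g^-\}$. Hence $\La\setminus gN^+e^+$ is contained in the single point $\{g^-\}$, so it suffices to show that $\nu_\psi(\{g^-\})=0$, i.e. that $\nu_\psi$ has no atoms. (If $g^-\notin\La$ this is immediate, but we are assuming $g^\pm\in\La$, so we genuinely need atomlessness at that point.)

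To prove $\nu_\psi$ is non-atomic, I would argue by contradiction. Suppose $\nu_\psi(\{\xi_0\})>0$ for some $\xi_0\in\La$. Let $c:=\max_{\xi\in\La}\nu_\psi(\{\xi\})$, attained since $\La$ is compact and $\nu_\psi$ is finite; let $F:=\{\xi\in\La:\nu_\psi(\{\xi\})=c\}$, a finite non-empty set. The key point is that the Radon--Nikodym cocycle $\frac{d\ga_*\nu_\psi}{d\nu_\psi}(\xi)=e^{\psi(\log\beta^A_\xi(e,\ga))}$ is finite and positive for every $\ga\in\Ga$ and $\xi\in\F$, so $\ga$ preserves the property of being a maximal atom: $\ga F=F$ for every $\ga\in\Ga$. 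Thus $F$ is a finite $\Ga$-invariant subset of $\La$. But $\Ga$ is Zariski dense (indeed Anosov, hence non-elementary and word-hyperbolic), so it cannot have a finite orbit on its limit set — concretely, any loxodromic $\ga\in\Ga$ with attracting fixed point not already in the finite set $F$ would, upon iteration, push points of $F$ toward $y_\ga\notin F$, contradicting $\ga F=F$; and such loxodromic elements exist in abundance since the loxodromic elements of $\Ga$ are Zariski dense in $G$. This contradiction shows $\nu_\psi$ is atomless, hence $\nu_\psi(\{g^-\})=0$ and $\nu_\psi(\La\cap gN^+e^+)=\nu_\psi(\La)=1$.

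The main obstacle is the atomlessness argument, and within it the cleanest route is to exploit that $\nu_\psi$, being a $(\Ga,\psi)$-Patterson--Sullivan measure with $\psi\in\dg$, is quasi-invariant under $\Ga$ with a \emph{strictly positive finite} cocycle; this makes the set of maximal atoms $\Ga$-invariant and finite, contradicting Zariski density of $\Ga$ acting on $\La$. An alternative, if one wants to avoid even mentioning atoms, is to invoke directly that the support of $\nu_\psi$ is $\La$ and that $\La$ is a perfect set (it is the limit set of a non-elementary word-hyperbolic group), but the quantity $\nu_\psi(\{g^-\})$ must still be shown to vanish, so some form of the no-atoms statement seems unavoidable. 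I would present the contradiction argument as above, since it uses only tools already introduced: the defining relation \eqref{gc000} for $\nu_\psi$, compactness and $\Ga$-minimality of $\La$, and Zariski density of $\Ga$ (equivalently, Zariski density of its loxodromic elements, cited from \cite{Ben} in the excerpt).
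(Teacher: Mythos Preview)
Your reduction step is correct and matches the paper exactly: by Lemma~\ref{ant} one has $\La\setminus\{g^-\}=\La\cap gN^+e^+$, so the corollary follows once $\nu_\psi$ is known to be atomless. The paper does not reprove atomlessness here; it simply cites \cite[Lem.~7.8]{LO}.

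Your attempt to prove atomlessness from scratch has a genuine gap. The assertion ``the Radon--Nikodym cocycle is finite and positive, so $\ga$ preserves the property of being a maximal atom: $\ga F=F$'' is not valid. Quasi-invariance with a finite positive cocycle only tells you that atoms map to atoms; it says nothing about preservation of the \emph{mass} of an atom. Concretely, from \eqref{gc000} one gets
\[
\nu_\psi(\{\ga\xi\})=e^{-\psi(\log\beta^A_{\ga\xi}(e,\ga))}\,\nu_\psi(\{\xi\}),
\]
and the exponent is typically nonzero. So if $\xi\in F$ has mass $c$, then $\ga\xi$ has mass $e^{-\psi(\log\beta^A_{\ga\xi}(e,\ga))}c$, which can be strictly smaller than $c$; there is no reason $\ga\xi$ should lie in $F$. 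The subsequent paragraph (no finite $\Ga$-invariant subset of $\La$) is fine, but it is applied to a set $F$ that you have not shown to be $\Ga$-invariant.

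The actual proof of atomlessness in \cite[Lem.~7.8]{LO} does not go through the ``maximal atom'' trick; it uses the shadow lemma together with divergence/growth estimates specific to $\psi\in\dg$ to bound $\nu_\psi$ of small balls, which forces all point masses to vanish. If you want a self-contained argument, that is the route to take; the shortcut you propose would work for a $\Ga$-\emph{invariant} measure but not for a genuinely conformal one.
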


\begin{proof}
By Lemma \ref{ant}, $\La-\{g^-\}= \La\cap gN^+e^+$.
Hence the claim follows from the fact that $\nu_\psi$ is atom-free \cite[Lem. 7.8]{LO}. \end{proof}


In the rest of this section, we assume that $\G<G$ is an Anosov subgroup. We will assume that
$$\Ga\cap 
\op{int} A^+ M\ne \emptyset
;$$
this can be achieved by replacing $\Gamma$ by one of its conjugates, and hence we do not lose any generality of our discussion by making such an assumption.

By Corollary \ref{full1}, this assumption implies that  $$\nu_\psi(\La\cap N^+e^+)=1\quad\text{ for any $\psi\in \dg$.}$$

\subsection{Hopf parametrization of $G$}
The map $\op{i}(gM)= (g^+, g^-, \beta_{g^+}^{A} (e, g))$ gives a 
$G$-equivariant homeomorphism between $G/M$ and $\F^{(2)}\times A$, where
the $G$-action on the latter is given by
$$g. (\xi, \eta, a)= (g\xi, g\eta, \beta_{g\xi}^{A} (e, g) a)\quad\text{for $g\in G$ and }((\xi, \eta), a)\in \F^{(2)}\times A.$$

For the principal $M$-bundle $G\to G/M$, we fix a Borel section $\mathsf s:G/M\to G$ so that $\mathsf s(han M)=han$ for all $han\in N^+AN$.
Now for any $g\in G$, there exists a unique $m_g\in M$ such that  $g=\mathsf s(gM) m_g $.
Then the map $\op{j} (g)= (\op{i} (gM), m_g)$ gives a $G$-equivariant
Borel isomorphism of $G$ with $\F^{(2)}\times AM$
where the $G$ action on the latter is given by
\begin{equation}\label{eq.BC}
g. (\xi, \eta, am)= (g\xi, g\eta, \beta_{g\xi}^{AM} (e, g) am)
\end{equation}
whenever $\xi, g\xi\in N^+e^+$.
We call this map the Hopf parametrization of $G$ (relative to the choice of $\mathsf s$).
We mention that this map was also considered in \cite{Dang}.

The restriction of $\op{j}$ to $N^+P$ is given by
\begin{equation}\label{eq.j0}
    \op{j}(g)= (g^+, g^-, \beta_{g^+}^{AM} (e, g))\quad\text{for $g\in N^+P$}
\end{equation}
which gives
a homeomorphism 
$$N^+P\simeq 
\{(\xi,\eta,am)\in\cal F^{(2)}\times AM:\xi\in N^+e^+\}.$$

Fix $\psi\in\dg$ in the rest of this section. 
For $(\xi_1,\xi_2)\in\cal \F^{(2)}$, define the $\psi$-Gromov product:
\be \label{gro}
[\xi_1,\xi_2]_{\psi} :=\psi(\log \beta^A_{g^+}(e,g)+\op i\log \beta^A_{g^-}(e,g) )
\ee
where $g\in G$ is such that $g^+=\xi_1$ and $g^-=\xi_2$.

In terms of the Hopf parametrization of $G$, the following  defines a left $\Gamma$-invariant and right $AM$-invariant measure on $G$:
\begin{align} \label{bmss}
d\tilde m^{\BMS}_{\psi}(g)&=e^{\psi (\log\beta^A_{g^+}(e, g) +\op{i} \log\beta^A_{g^-} (e, g )) } \;  d\nu_{\psi} (g^+) d\nu_{\psi\circ \i}(g^-)\,da\, dm\\ &=
e^{[\xi_1,\xi_2]_{\psi} } \;  d\nu_{\psi} (g^+) d\nu_{\psi\circ \i}(g^-)\,da\, dm .\notag
\end{align}
We denote by  $m^{\BMS}_{\psi}$  the measure on $\Ga\ba G$ induced by  $\tilde m^{\BMS}_{\psi}$ and call it
the Bowen-Margulis-Sullivan measure (associated to $\psi$). Note that its support is equal to  
\be\label{ome} \Omega:=\{x\in \Gamma\ba G: x^{\pm}\in \La\}.\ee
In (\cite{Sambarino}, \cite{LO}), it was noted that $m_\psi^{\BMS}$ is an $AM$-ergodic  measure and that it is infinite whenever $\op{rank}G\ge 2$. 

Similarly, the Burger-Roblin measure $m_\psi^{\BR}$ on $\Ga\ba G$ is induced from the following
  left $\Gamma$-invariant and right $NM$-invariant measure on $G$: \be\label{brr}
d\tilde m^{\BR}_{\psi}(g)=e^{\psi (\log\beta^A_{g^+}(e, g)) +2\rho{( \log\beta^A_{g^-} (e, g )) }} \;  d\nu_{\psi} (g^+) dm_o(g^-)\,da\, dm,
\ee
where $\rho$ denotes the half sum of all positive roots with respect to $\fa^+$ and $m_o$ denotes the $K$-invariant probability measure on $G/P$.
Note that the support $m_\psi^{\BR}$ is equal to  
$\cal E$, which was defined in \eqref{eq.calE}.  This was first defined in \cite{ELO}.

By Corollary \ref{full1}, $$\tilde m_\psi^{\BMS}(G- N^+P)=0=\tilde m_\psi^{\BR}(G- N^+P).$$

\subsection{Ergodic decomposition of $m_\psi^{\BMS}$.} Recall  from subsection \ref{defyy}:
 $$\tilde \La=\bigsqcup_{\La_0\in \cal Y_\Ga} \La_0
 \quad \text{and}\quad  \cal E=\bigsqcup_{\cal E_0\in \mathfrak Y_\Ga} \cal E_0.$$
We denote by $\tilde \nu_\psi$ the $M/\mc$-invariant lift of $\nu_\psi$ to $\tilde \Lambda\subset \fc$, i.e., for $f\in C(\fc)$,
$$\npc(f):=\nu_\psi (\sum_{m\in M/\mc} m.f)=\nu_\psi (\int_{m\in M} m.f \,dm)$$ where $m.f (x)=f(xm)$. 

\begin{thm}\label{thm.AE}  Let $\G<G$ be an Anosov subgroup.  
\begin{enumerate}
\item The restriction $\npc$ to each $\Gamma$-minimal subset of $\fc$ is $\G$-ergodic. 
In particular, $\npc=\sum_{\La_0\in \cal Y_\Ga} \npc|_{{\La_0}}$
is a $\Ga$-ergodic decomposition.
\item The restriction of $m_\psi^{\BMS}$ to each $\pc$-minimal subset of $\Ga\ba G$ is $A$-ergodic.
\end{enumerate}
In particular, $$m_{\psi}^{\BMS}=\sum_{\cal E_0\in \mathfrak Y_\Ga} m_\psi^{\BMS}|_{{\cal E_0}}$$
is an $A$-ergodic decomposition.
\end{thm}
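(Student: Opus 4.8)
The plan is to prove the two parts essentially in parallel, since (2) is the ``dual'' statement of (1) obtained by the identification $\Ga\ba G \supset \E_0 = \Ga\ba E(\La_0)$ with $\La_0 = E(\La_0)/\pc$ and the fact that $\pc$-minimal subsets of $\Ga\ba G$ correspond bijectively to $\Ga$-minimal subsets $\La_0 \in \cal Y_\Ga$ of $\fc$. So I would first dispose of the bookkeeping: observe that $\npc$ is, by construction, supported on $\tilde\La = \bigsqcup_{\La_0\in\cal Y_\Ga}\La_0$, that each $\La_0$ is $\Ga$-invariant and closed, and that the $\npc|_{\La_0}$ are mutually singular with $\npc = \sum_{\La_0}\npc|_{\La_0}$; likewise $\tilde m_\psi^{\BMS}$ is supported on $\bigsqcup_{\E_0}E(\La_0)$ (up to the null set $G - N^+P$, by Corollary \ref{full1}), the pieces are $A$-invariant, closed, mutually singular, and sum to $m_\psi^{\BMS}$. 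Granting this, the theorem reduces entirely to the \emph{ergodicity} of each restriction, i.e.\ to part (1): that $\npc|_{\La_0}$ is $\Ga$-ergodic for each $\La_0 \in \cal Y_\Ga$. The passage from (1) to (2) is then the standard correspondence between $\Ga$-ergodicity of a measure on $G/\pc$ built from a conformal density on $\F$ and $A$-ergodicity of the associated BMS measure on $\Ga\ba G$ (using the Hopf parametrization \eqref{eq.BC} and that the $M$-coordinate carries the $\pc$-minimal decomposition, while the flow direction is $A$); I would spell this out via the product structure $N^+P \simeq \{(\xi,\eta,am)\}$ and the fact that the $\eta$- and $a$-directions are, from the point of view of $\Ga$, handled by the Patterson–Sullivan/Lebesgue factors.

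For the core claim that $\npc|_{\La_0}$ is $\Ga$-ergodic, the natural route is via \emph{essential values}. The point of the later sections of the paper is precisely to introduce the group $\mathsf E_{\nu_\psi}$ (or rather $\mathsf s(\Ga) = AM_\Ga$ on the cocycle side) of essential values of the $AM$-valued cocycle, and Corollary \ref{jordan} already identifies $b(\Ga) = \mathsf s(\Ga) = AM_\Ga$. The mechanism is this: $\nu_\psi$ is $\Ga$-ergodic on $\La$ (this is part of the Anosov Patterson–Sullivan theory, or follows from ergodicity of the $NM$-action established in \cite{LO}), and $\npc$ is the push-up of $\nu_\psi$ along the $M/\mc$-fibers; a $\Ga$-invariant measurable partition of $\npc|_{\La_0}$ into ergodic components therefore descends to a $\Ga$-invariant measurable family of subsets of the $M/\mc$-fiber over $\nu_\psi$-a.e.\ point, equivariant for the $AM$-valued Bruhat cocycle $b(\ga,\cdot)$ acting on the fiber. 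Because $M_\Ga$ is exactly the group of ``displacements'' $m$ with $\La_0 m = \La_0$, and because $b(\Ga)$ projects onto $M_\Ga$ in $M$ (Proposition \ref{prop.GR}(3)), the cocycle acts transitively on the $M_\Ga/\mc$-orbit that constitutes the relevant fiber of $\La_0 \to \La$; an invariant measurable sub-field must then be a.e.\ the whole fiber or a.e.\ empty, giving ergodicity. I would phrase this cleanly as: the $\Ga$-space $(\La_0,\npc|_{\La_0})$ is measurably isomorphic to $(\La,\nu_\psi) \times M_\Ga/\mc$ with $\Ga$ acting on $\La$ in the given way and on $M_\Ga/\mc$ through $\ga \mapsto (m \mapsto \beta_\xi^M(e,\ga)\,m)$ (cf.\ Example \ref{ex}), and this skew product is ergodic because the base is ergodic and the fiber cocycle takes values whose closed group generated is all of $M_\Ga/\mc$ (Corollary \ref{jordan}), so there is no intermediate invariant sub-$\sigma$-algebra.

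The main obstacle, and where real work is needed, is establishing that last ``no intermediate factor'' statement — i.e.\ that the fiber cocycle $\ga\mapsto\beta_\xi^M(e,\ga)$ is \emph{not cohomologous to a cocycle into a proper closed subgroup} of $M_\Ga/\mc$, equivalently that its group of essential values is the full $M_\Ga/\mc$. The generated-group statement $\overline{\langle b(\ga,\xi)\rangle} = AM_\Ga$ from Corollary \ref{jordan} is about the values themselves, not about essential values after a coboundary change, so one genuinely needs the essential-value machinery: introduce $\mathsf E_{\nu_\psi} < AM$ as in Definition \ref{def.ess}, show via a density/recurrence argument (Zariski density of $\Ga^\star$, the fact that loxodromic elements with attracting point near a given $\xi$ are abundant, and a Poincaré-recurrence style argument on the suspension) that every generalized Jordan projection $\hat\la(\ga)$ with $\psi(\log\text{-part})$ large enough lies in $\mathsf E_{\nu_\psi}$ (this is the content of the promised Proposition \ref{prop.E}), and then that these generate all of $AM_\Ga$ (via Proposition \ref{cor.A} / Corollary \ref{jordan}). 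The technical heart is the recurrence argument identifying length-spectrum elements as essential values, since it is there that the geometry of $\Ga$ (antipodality, Lemma \ref{ant}; mixing/genericity of BMS along $A^+$; finiteness issues when $\mathrm{rank}\,G \ge 2$ must be handled measure-theoretically rather than topologically) enters in an essential way. Once that is in place, both (1) and (2) follow formally, and the displayed equality $m_\psi^{\BMS} = \sum_{\E_0\in\yg}m_\psi^{\BMS}|_{\E_0}$ being an $A$-ergodic decomposition is immediate from ergodicity of the summands plus their mutual singularity and $A$-invariance.
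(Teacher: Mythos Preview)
Your proposal has a genuine circularity problem. You propose to establish the $\Gamma$-ergodicity of $\npc|_{\La_0}$ by showing that the essential-value group $\mathsf E_{\nu_\psi}$ contains $M_\Ga/M^\circ$, and for this you invoke Proposition~\ref{prop.E}. But the proof of Proposition~\ref{prop.E} (via Lemmas~\ref{lem.C2}, \ref{lem.ge}, \ref{lem.sh2}) rests on the set $\spa$ of \emph{strong} Myrberg limit points having full $\nu_\psi$-measure (Corollary~\ref{full}), and that full-measure statement is deduced from the $\br$-ergodicity of $\hat m_\psi|_{Z_{\E_0}}$ in Proposition~\ref{lem.AI}, which is the substance of Theorem~\ref{thm.AE} itself. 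In the paper's logical order the essential-value machinery is downstream of Theorem~\ref{thm.AE}, not an input to it; it is used for the $N$-ergodicity of $m_\psi^{\BR}$ (Theorem~\ref{main}(1)), not for the $A$-ergodicity of $m_\psi^{\BMS}$.

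The paper's actual argument is quite different in character and avoids essential values entirely. One works directly with $m_\psi^{\BMS}$ and runs a Hopf argument: any bounded $\Sigma$-measurable function (i.e.\ one that is, up to null sets, simultaneously $\Ga AN$- and $\Ga AN^+$-invariant) is shown to be $\Ga AM_\Ga$-invariant. The key step (Lemma~\ref{lem.m0}) realizes every element of $b(g^{-1}\Ga g)=AM_\Ga$ (Proposition~\ref{prop.GR}) as the endpoint of a finite chain of $N^\pm$-moves staying in a conull set, so $f^\e$ picks up $M_\Ga$-invariance pointwise. This yields that the invariant $\sigma$-algebra is generated by finitely many $M_\Ga\backslash M$-translates of a single set (Corollary~\ref{cor.dec}). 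Then one passes to the \emph{finite} measure $\hat m_\psi$ on $Z=\Ga\backslash \La^{(2)}\times \br\times M$ (this is where the Sambarino/Carvajales reparametrization enters), applies Birkhoff, and shows $f_\sharp\circ\Psi$ is $\Sigma$-measurable, hence constant on each atom; this gives $\br$-ergodicity of $\hat m_\psi|_{Z_{\E_0}}$ and then $A$-ergodicity of $m_\psi^{\BMS}|_{\E_0}$ via the product $A=\exp(\br u)\times\exp(\ker\psi)$. Part~(1) falls out of Proposition~\ref{lem.AI} along the way, so the flow of implication is closer to $(2)\Rightarrow(1)$ than the $(1)\Rightarrow(2)$ you sketch. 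Your reduction to a skew product over $(\La,\nu_\psi)$ with fiber $M_\Ga/M^\circ$ is a reasonable heuristic, but to make it rigorous without circularity you would need an independent proof that the $M$-cocycle has full essential values; the paper sidesteps this by the Hopf/transitivity argument instead.
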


The rest of this section is devoted to the proof of this theorem. 
Set $$\tilde\Omega:=\{g\in G : \Ga g\in\Omega\}=\{g\in G: g^{\pm}\in \La\}.$$
Let $\cal B$ denote the Borel $\sigma$-algebra on $G$.
 We set
$$
\Sigma_\pm:=\{B\cap\tilde \Omega : B\in\cal B\text{ with  }B=\Ga B A N^\pm\}.$$
We also define $\Sigma$ to be the collection of all $B\in \cal B$ such that
 $m_{\psi}^{\BMS}(B\bigtriangleup B_+)=
m_{\psi}^{\BMS}(B\bigtriangleup B_-)=0$
 for some $B_\pm\in \Sigma_\pm$.
Recall the subgroup $M_\Ga<M$ given in \eqref{mg}, and define
$$
\Sigma_0:=\{B\cap\tilde\Omega : B\in\cal B\text{ with }B=\Ga BA M_\Ga\}.
$$
The following is a main technical ingredient of the proof of Theorem \ref{thm.AE}:
\begin{lemma}\label{lem.sub}
We have $\Sigma\subset\Sigma_0$ mod $m_{\psi}^{\BMS}$; that is, for all $B\in\Sigma$, there exists $B_0\in\Sigma_0$ such that $m_{\psi}^{\BMS}(B \bigtriangleup B_0)=0$.
\end{lemma}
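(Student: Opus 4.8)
The plan is to prove Lemma \ref{lem.sub} by showing that any set $B\in\Sigma$ is, up to a $m_\psi^{\BMS}$-null set, invariant under the full group $AM_\Ga$; since $B$ lies in $\tilde\Omega$ and is automatically $\Ga$-invariant when it comes from the $\sigma$-algebra on $\Ga\ba G$, membership in $\Sigma_0$ then follows. The starting point is the definition: $B\in\Sigma$ means there are $B_\pm\in\Sigma_\pm$ agreeing with $B$ mod $m_\psi^{\BMS}$, so $B$ is (mod null) simultaneously right-$AN^+$-invariant and right-$AN^-$-invariant (and left-$\Ga$-invariant, and contained in $\tilde\Omega$). Passing to the Hopf parametrization $\op j:N^+P\to\{(\xi,\eta,am):\xi\in N^+e^+\}$ of \eqref{eq.j0}, and recalling from Corollary \ref{full1} that $\tilde m_\psi^{\BMS}$ is supported on $N^+P\cap\tilde\Omega\cong(\La\cap N^+e^+)^{(2)}\times AM$, the product structure of $d\tilde m_\psi^{\BMS}$ in \eqref{bmss} says that right-$N^+$-invariance mod null makes (the indicator of) $B$ essentially a function of $(\eta, am)$ alone, while right-$N^-$-invariance makes it essentially a function of $(\xi, am)$; combined with left-$\Ga$-invariance and the fact that $\Ga$ acts on each of the $\xi$- and $\eta$-factors minimally (ergodically, with respect to $\nu_\psi$ resp.\ $\nu_{\psi\circ\i}$, by \cite{LO}), a standard $0$--$1$ argument forces $B$ to be essentially a function of the $AM$-coordinate only --- i.e.\ mod null $B$ corresponds under $\op j$ to $(\La\cap N^+e^+)^{(2)}\times C$ for some Borel $C\subset AM$.

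Next I would identify which $C$ can occur. The left $\Ga$-action on the $AM$-coordinate, read through \eqref{eq.BC}, is translation by the Bruhat cocycle $\beta_{\ga\xi}^{AM}(e,\ga)=b(\ga,\xi)$ composed with the change of base point; so $\Ga$-invariance of $B$ forces $C$ to be invariant under left translation by every $b(\ga,\xi)$ with $\ga\in\Ga$, $\xi,\ga\xi\in N^+e^+$, and hence (taking closures, and using that the relevant orbit is dense in $\La\cap N^+e^+$) under left translation by the whole closed subgroup $b(\Ga)$. By Lemma \ref{lem.spec} and Proposition \ref{prop.GR}(3) — equivalently Corollary \ref{jordan} — we have $b(\Ga)=\mathsf s(\Gamma)=AM_\Ga$. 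Therefore $C$ is a union of left cosets of $AM_\Ga$ in $AM$, so $\op j^{-1}((\La\cap N^+e^+)^{(2)}\times C)$ is right-$AM_\Ga$-invariant; unwinding the parametrization, this exhibits a set $B_0$ with $B_0=\Ga B_0 A M_\Ga$, $B_0\subset\tilde\Omega$, and $m_\psi^{\BMS}(B\bigtriangleup B_0)=0$, which is exactly $B_0\in\Sigma_0$ matching $B$.

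The main obstacle I anticipate is the first paragraph's measure-disintegration step: turning "invariant mod null under $N^+$" and "invariant mod null under $N^-$" into "essentially depends only on $am$," cleanly and with the correct quantifiers. One has to be careful because the measure $\tilde m_\psi^{\BMS}$ is a genuine product $d\nu_\psi(g^+)\,d\nu_{\psi\circ\i}(g^-)\,da\,dm$ only after the homeomorphism \eqref{eq.j0}, and the relevant invariance is only mod null, not exactly; the rigorous route is a Fubini/disintegration argument (a conditional-expectation computation showing $\mathbf 1_B$ is a.e.\ measurable with respect to the factor $\sigma$-algebra), combined with the ergodicity of the $\Ga$-action on $(\La,\nu_\psi)$ from \cite[Thm.~1.3]{LO} to kill the $\xi$- and $\eta$-dependence. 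A secondary technical point is the density/closure step identifying the group generated by the cocycle values $b(\ga,\xi)$ with $b(\Ga)$ for a \emph{fixed} $\xi$, but that is precisely the content of Lemma \ref{lem.xi} and needs only to be invoked. Once these are in place the passage to $AM_\Ga$ via Corollary \ref{jordan} is immediate, and the bookkeeping to produce $B_0\in\Sigma_0$ is routine.
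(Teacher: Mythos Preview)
Your first paragraph contains a genuine error. In the Hopf parametrization $\op j(g)=(g^+,g^-,\beta_{g^+}^{AM}(e,g))$, right multiplication by $N$ does fix $(g^+,\beta_{g^+}^{AM}(e,g))$, so $\Sigma_-$-measurability indeed reduces $\mathbf 1_B$ to a function of $(\xi,am)$. But right multiplication by $N^+$ does \emph{not} fix the $am$-coordinate: under $g\mapsto gh'$ the base point moves from $g^+$ to $(gh')^+$, and $\beta_{(gh')^+}^{AM}(e,gh')\ne\beta_{g^+}^{AM}(e,g)$ in general (both the $A$- and the $M$-parts shift). What right-$N^+$-invariance actually gives is that $\mathbf 1_B$ is a function of $(g^-,\beta_{g^-}^{AM}(e,g))$, an $AM$-coordinate anchored at $g^-$ rather than $g^+$. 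The two $AM$-coordinates differ by a quantity depending on $(g^+,g^-)$, so you cannot conclude by Fubini that $\mathbf 1_B$ depends on a single $am$ only.

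Your fallback --- use $\Ga$-ergodicity on $(\La,\nu_\psi)$ to kill the $\xi$-dependence --- does not rescue the argument: the $\Ga$-action on $(\xi,am)$ is the skew product $\ga.(\xi,am)=(\ga\xi,\beta_{\ga\xi}^{AM}(e,\ga)am)$, and ergodicity on the base says nothing about the skew extension. Indeed this skew product has exactly $[M:M_\Ga]$ ergodic components, which is precisely what the lemma is trying to establish; a genuine ``$0$--$1$'' conclusion here would force $\Sigma$ to be trivial mod $m_\psi^{\BMS}$, which is false. The paper's proof confronts exactly this obstacle: Lemma~\ref{lem.m0} smooths $f$ over $M_\e$, builds explicit $N^+/N$-paths inside a Fubini-conull set $E$, and tracks the resulting $AM$-displacement, showing it lies in $b(g^{-1}\Ga g)=AM_\Ga$ by Proposition~\ref{prop.GR}. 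That holonomy computation is the missing idea in your outline; once it is in place, your second paragraph (identifying the allowed $C\subset AM$ via $b(\Ga)=AM_\Ga$) is essentially the same conclusion the paper draws.
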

This lemma follows if we show that any  bounded $\Sigma$-measurable function on $\tilde\Omega$ is $\Sigma_0$-measurable
modulo $m_\psi^{\BMS}$.

Let $f$ be any bounded $\Sigma$-measurable function on $\tilde\Omega$.
We may assume without loss of generality that $f$ is strictly left $\Ga$-invariant and right $A$-invariant \cite[Prop. B.5]{Zi}.
There exist bounded $\Sigma^\pm$-measurable functions $f_\pm$ such that $f=f_\pm$ for $m_{\psi}^{\BMS}$-a.e.
Note
that $f_\pm$ satisfy $f_\pm(gn)=f_\pm(g)$ whenever $g, gn\in\tilde\Omega$ with $n\in N^\pm$.
Set
$$
E:=\left\{ gAM :
\begin{array}{c}
f|_{gAM}\text{ is measurable and}\\
f(gm)=f_+(gm)=f_-(gm)\\
\text{for Haar a.e. }m\in M
\end{array}
\right\}\subset\tilde\Omega/AM.
$$
By Fubini's theorem, $E$ has a full measure on $\tilde\Omega/AM\simeq\La^{(2)}$ with respect to the measure $d\nu_\psi\,d\nu_{\psi\circ\op{i}}$. For all small $\e>0$, define functions $f^\e, f_\pm^\e : \tilde\Omega\to\bb R$ by
$$
f^\e(g):=\tfrac{1}{\op{Vol}(M_\e)}\int_{M_\e}f(gm)\,dm\text{ and }f_\pm^\e(g):=\tfrac{1}{\op{Vol}(M_\e)} \int_{M_\e}f_\pm(gm)\,dm
$$where $M_\e$ denotes the $\e$-ball around $e$ in $M$.
Note that if $gAM\in E$, then $f^\e$ and $ f_\pm^\e$ are continuous and identical on $gAM$.
Moreover, as $M$ normalizes subgroups $A$ and $N^{\pm}$,
 $f^\e$ is strictly left $\Ga$-invariant, right $A$-invariant and $f_\pm^\e(gn)=f_\pm^\e(g)$ whenever $g, gn\in\tilde\Omega$ with $n\in N^\pm$.
Using the isomorphism between $\tilde\Omega/AM$ and $\La^{(2)}$ given by $gAM\mapsto (g^+, g^-)$,
we may consider $E$ as a subset of  $\La^{(2)}$.
We then define
\begin{align*}
E^+:&=\{\xi\in\La : (\xi,\eta') \in E\;\;\text{ for $\nu_{\psi\circ\op{i}}$-a.e. }\eta'\in\La\};\\
E^-:&=\{\eta\in\La : (\xi',\eta) \in  E\;\; \text{ for $\nu_{\psi}$-a.e. }\xi'\in\La\}.
\end{align*}
 
Then $E^-$ is $\nu_{\psi\circ\i}$
-conull and $E^+$ is 
$\nu_\psi$-conull by Fubini's theorem.
Set
$$
E_\eta^+:=\{\xi\in\La : (\xi,\eta) \subset E\}\quad \text{ and } \quad E_\xi^-:=\{\eta\in\La : (\xi,\eta) \subset E\}.
$$

Note that $E_\xi^-$
is $\nu_{\psi\circ\i}$-conull for all 
$\xi\in E^+$ and  that $E_\eta^+$ is $\nu_\psi$-conull for all $\eta\in E^-$.
\begin{lemma}\label{lem.m0}\label{lem.m1}
Let $g\in\tilde\Omega$ be such that $gAM\in E$ and $g^\pm\in E^\pm$.
Then for any $\e>0$, $f^\e(gm_0)=f^\e(g)$ for all $m_0\in M_\Ga$.
\end{lemma}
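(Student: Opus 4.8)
The plan is to prove Lemma \ref{lem.m0} by a continuity argument that exploits the density of $\Gamma$-translates of $N^\pm$-orbit relations inside $\tilde\Omega$, together with the fact that $AM_\Ga = b(\Ga) = \mathsf s(\Gamma)$ from Corollary \ref{jordan} and Lemma \ref{lem.spec}. The point is that $f^\e$ and $f_\pm^\e$ are continuous on the orbit $gAM$ (because $gAM\in E$), that $f^\e$ is $\Gamma$-invariant and $A$-invariant, and that $f_\pm^\e$ is $N^\pm$-invariant along $\tilde\Omega$; so the displacement of $gAM$ by elements of $M_\Ga$ can be realized, up to a small error controlled by $\e$ and a limiting process, as a composition of $\Gamma$-translations and $N^\pm$-moves, all of which fix the value of $f^\e$.

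Concretely, I would proceed as follows. First, reduce to showing $f^\e(g m_0) = f^\e(g)$ for $m_0$ ranging over a generating set of $M_\Ga$; by Corollary \ref{jordan} it suffices to treat $m_0$ of the form (the $M$-component of) a generalized Jordan projection $\hat\la(\ga) = b(\ga, y_\ga)$ for $\ga\in\Ga^\star$, and indeed, by the definition of $b(\Gamma)$ and Lemma \ref{lem.xi}, elements $b(\ga,\xi)$ with $\xi,\ga\xi\in N^+e^+$. Second, fix such a $\ga$ and $\xi$. Using the Hopf parametrization \eqref{eq.j0}--\eqref{eq.BC}, the action of $\ga$ on a point $h\in N^+P$ with $h^+ = \xi$ multiplies the $AM$-coordinate on the left by $\beta^{AM}_{\ga\xi}(e,\ga) = b(\ga,\xi)$ up to the cocycle bookkeeping; so $\ga$ moves the $AM$-fiber over $(\xi,\eta)$ by $b(\ga,\xi)$ to the $AM$-fiber over $(\ga\xi,\ga\eta)$. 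Third, I would exploit $\Gamma$-minimality of $\La$ (as in Lemma \ref{lem.xi} and Lemma \ref{lem.spec}): choose $\ga_n\in\Gamma$ with $\ga_n\xi_n \to g^+$ and $\ga_n\eta_n\to g^-$ along points $\xi_n,\eta_n$ whose fibers lie in $E$ and in $E^\pm$, so that we may slide along $N^+$ and $N^-$ orbits (where $f_+^\e$, resp.\ $f_-^\e$, is constant) to connect $g$'s fiber to $\Gamma$-translates of fibers over points in $E^\pm$; the values of $f^\e = f_\pm^\e$ are unchanged along each such move, and by continuity of $f^\e$ on $gAM$ (plus the approximate $M_\e$-averaging, which absorbs the discrepancy between $b(\ga,\xi)$ and $b(\ga,\xi_n)$ up to size $\e$) the net effect is multiplication of the $M$-coordinate by an element of $M_\Ga$ while preserving the value of $f^\e$. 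Taking a limit in $n$ and using Remark \ref{rmk.WD} gives $f^\e(gm_0) = f^\e(g)$ for all $m_0\in M_\Ga$.

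**The main obstacle** I anticipate is the bookkeeping that makes the previous paragraph rigorous: the Iwasawa/Bruhat cocycles $\sigma^{AM}$ and $b$ depend on the full point, not just on its $\cal F^{(2)}$-projection, so translating a fiberwise statement ("$f^\e$ is constant along $gM$ and gets shifted by $b(\ga,\xi)$") into an honest equality of values requires carefully tracking how $\Gamma$ and the $N^\pm$-moves act in the $AM$-coordinate of the Hopf parametrization, and verifying that the limit point of the composed moves lands exactly on $gM_\Ga$ rather than on $gAM$ (the $A$-direction is killed because $f^\e$ is strictly $A$-invariant, so only the $M$-component matters). A secondary technical point is ensuring the sets $E$, $E^\pm$, $E_\xi^-$, $E_\eta^+$ are hit by the approximating sequences $\xi_n,\eta_n$ — this is where the conullity statements just established (via Fubini) are used, guaranteeing that generic choices work. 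Once these are handled, the equality $f^\e(gm_0)=f^\e(g)$ for $m_0\in M_\Ga$ follows, and letting $\e\to 0$ (along a sequence, using dominated convergence and the $\Sigma$-measurability of $f$) will yield that $f$ itself is $M_\Ga$-invariant $m_\psi^{\BMS}$-a.e., hence $\Sigma\subset\Sigma_0$, which is Lemma \ref{lem.sub}.
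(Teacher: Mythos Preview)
Your strategy is essentially the paper's: reduce to generators of $AM_\Ga$ via the identification $b(\Ga)=AM_\Ga$, realize each such generator as a displacement obtained by composing a single $\Ga$-move with finitely many $N^\pm$-moves whose intermediate stages remain in $E$, and then pass to a limit using the continuity of $f^\e$ on $gAM$.

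Two points where your outline departs from the paper and should be tightened. First, the reduction step: since the displacement is measured at $g$ (not at $e$), what is actually needed is that $b(g^{-1}\Ga g)=AM_\Ga$, i.e.\ Proposition~\ref{prop.GR}(1) together with~(3); Corollary~\ref{jordan} alone gives generators of $b(\Ga)$ based at $\xi\in\La\cap N^+e^+$, which is not directly the right basepoint. Second, your step~3 introduces an auxiliary sequence $\ga_n\in\Ga$, but none is needed and the formulation there is unclear. The paper fixes one $\ga\in\Ga$, sets $(am)^{-1}=b(g^{-1}\ga g,\xi)$, and writes $\ga g\,am=gh_1n_1h_2$ with $h_1,h_2\in N^+$, $n_1\in N$ --- exactly three horospherical moves. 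Only the single intermediate point $\xi_1=(gh_1)^+$ may fail to give an $E$-configuration, so one approximates it by a sequence $\xi_{1,\ell}$ in the conull set $E^+\cap E_{\eta_0}^+\cap E_{\eta_1}^+$ (here $\eta_0=g^-$, $\eta_1=\ga g^-$, both in $E^-$ by $\Ga$-invariance), reconstructs $h_{1,\ell},n_{1,\ell},h_{2,\ell}$ with prescribed $\pm$-data, and checks each stage lies in $E$. This yields $f^\e(ga_\ell m_\ell)=f^\e(g)$ by the opening observation, and $a_\ell m_\ell\to am$ plus continuity finishes. So the approximating sequence is of \emph{flag points}, not of group elements.
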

\begin{proof} 
We will use the following observation in the proof.
For $am\in AM$, suppose that there exist $\ga\in \Ga$,
and a sequence $h_1, \cdots, h_k \in N\cup N^+ $ such that 
$\ga g am =g h_1\cdots h_k$ and
$
gh_1\cdots h_i\in E \,\,\,\text{for all}\,\,\, 1\leq i\leq k.
$
Then
$$f^\e(gam)=f^\e(\ga gam)=f^\e(gh_1\cdots h_r)=f^\e(gh_1\cdots h_{r-1})=\cdots =f^\e(g), $$ by  the $N^{\pm}$-invariance of $f_{\pm}^\e$, the invariance of $f$ by $\Gamma$ and $A$ and the fact that all three agree on $E$. 

By 
Proposition \ref{prop.GR},
it suffices to prove 
that $$f^\e(g b(g^{-1}\ga g,\xi))=f^\e(g)
$$ for any $\ga \in \Ga$ and $\xi\in g^{-1}\La\cap N^+e^+$.
Setting $b(g^{-1}\ga g,\xi)=(am)^{-1}$, we may write $\ga g am =gh_1n_1h_2$ where  $h_1,h_2\in N^+$ and $n_1\in N$.
Note that $E^\pm$ are $\Ga$-invariant, as the measures $\nu_\psi$ and $\nu_{\psi\circ\i}$ are $\Ga$-quasi-invariant. 
Since $g^\pm\in E^\pm$, we get $\ga g^\pm\in E^\pm$.
Set 
\begin{align*}
\xi_0&=g^+,&\eta_0&=g^-,\\
\xi_1&=gh_1^+,&\eta_1&=gh_1n_1^-(=\ga g^-),\\
\xi_2&=gh_1n_1h_2^+(=\ga g^+).& &
\end{align*}

Choose a sequence $\xi_{1,\ell}\in E^+\cap E_{\eta_0}^+\cap E_{\eta_1}^+$ which converges to 
$\xi_1$ as $\ell\to\infty$.
This is possible because $E^+\cap E_{\eta_0}^+\cap E_{\eta_1}^+$ is dense in $\La$, as it is $\nu_\psi$-conull from the hypothesis that $\xi_0=g^-\in E^-$ and $\xi_1=\ga g^-\in E^-$.
Let $h_{1,\ell}\in N^+$ be the unique element such  that $(gh_{1,\ell})^+=\xi_{1,\ell}
$, $n_{1,\ell}\in N$ the unique element such that $(gh_{1,\ell}\, n_{1,\ell})^-=\ga g^-$, and finally $h_{2,\ell}\in N^+$ the unique element such that $(gh_{1,\ell}\, n_{1,\ell}\, h_{2,\ell})^+=\ga g^+$.
Since $(gh_{1,\ell}\, n_{1,\ell}\, h_{2,\ell})^\pm=\ga g^\pm$, we have $gh_{1,\ell}\, n_{1,\ell}\, h_{2,\ell}=\ga g a_\ell m_\ell$ for some $a_\ell\in A$ and $m_\ell\in M$.
Note that $a_\ell m_\ell\to am$ as $\ell\to\infty$ and that  $a_\ell m_\ell\in  b(g^{-1}\Ga g)$. The  sequences $h_{1,\ell},n_{1,\ell},h_{2,\ell}\in N\cup N^+$  satisfy
\begin{itemize}
\item
$gh_{1,\ell} AM\in E$, as $(gh_{1,\ell})^-=
\eta_0
$ and $(gh_{1,\ell})^+=
\xi_{1,\ell}
\in E_{\eta_0}^+
$;
\item
$gh_{1,\ell} \, n_{1,\ell} AM\in E$, as $(gh_{1,\ell}\, n_{1,\ell})^-=\eta_1
$ and $(gh_{1,\ell}\, n_{1,\ell})^+=\xi_{1,\ell}\in E_{\eta_1}^+
$;
\item
$gh_{1,\ell}\, n_{1,\ell} \, h_{2,\ell} AM=\ga gAM \in E$, as $gAM\in E$ and $E$ is $\Ga$-invariant. 
\end{itemize}
Therefore, 
$f^\e(g a_\ell m_\ell)=f^\e(g)$ by the observation
made in the beginning of the proof.
Since $gAM\in E$, $f^\e$ is continuous on $gAM$ and hence
$$
f^\e(gam)=\lim\limits_{\ell\to\infty}f^\e(ga_\ell m_\ell)=f^\e(g).
$$
This finishes the proof.
\end{proof}

\noindent
\textbf{Proof of Lemma \ref{lem.sub}:}
Let $f$ be any bounded $\Sigma$-measurable function on $\tilde\Omega$.
For any $\e>0$, by Lemma \ref{lem.m1}, $f^\e$ coincides with a $\Sigma_0$-measurable function $m_{\psi}^{\BMS}$-a.e.
Since $\lim_{\e\to 0} f^\e= f$ $m_{\psi}^{\BMS}$-a.e., $f$ is a $\Sigma_0$-measurable function $m_{\psi}^{\BMS}$-a.e. as well.
This proves the lemma.$\qed$

\medskip

\begin{cor}\label{cor.dec}
There exists $B\in\Sigma$ such that any two distinct subsets in $\{B.s: s\in M_\Ga \ba M\}$ are measurably disjoint and
$\Sigma$ is 
the
finite $\sigma$-algebra generated by $\{B.s: s\in M_\Ga \ba M\}$  mod $m_{\psi}^{\BMS}$. 
\end{cor}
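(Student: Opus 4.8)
The plan is to show that, modulo $m_\psi^{\BMS}$, the $\sigma$-algebra $\Sigma$ is finite, is built from finitely many ``slabs'' sitting over the finite set $M_\Ga\ba M$, and is permuted transitively under right translation by $M$; the existence of $B$ as in the statement then follows by taking $B$ to be any atom of $\Sigma$.

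\emph{Descent to $M_\Ga\ba M$.} First I would take a bounded $\Sigma$-measurable function $f$ on $\tilde\Omega$, assume (as in the proof of Lemma~\ref{lem.sub}) that it is strictly left $\Ga$-invariant and right $A$-invariant, and invoke Lemma~\ref{lem.sub} to see that $f$ coincides $m_\psi^{\BMS}$-a.e.\ with a right $M_\Ga$-invariant function; so $f$ descends to a function on $\Ga\ba\tilde\Omega/AM_\Ga$. Restricting the Hopf parametrization $\op{j}$ of \eqref{eq.j0} to the $m_\psi^{\BMS}$-conull set $N^+P$ (Corollary~\ref{full1}), I identify $\tilde\Omega$ with $\La^{(2)}\times AM$ mod $m_\psi^{\BMS}$, with $\tilde m_\psi^{\BMS}$ corresponding to a measure equivalent to $d\nu_\psi(\xi)\,d\nu_{\psi\circ\i}(\eta)\,da\,dm$ and $\Ga$ acting by $\ga.(\xi,\eta,am)=(\ga\xi,\ga\eta,\beta^{AM}_{\ga\xi}(e,\ga)\,am)$ as in \eqref{eq.BC}. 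For $\xi\in\La\cap N^+e^+$ the cocycle value $\beta^{AM}_{\ga\xi}(e,\ga)$ differs from the Bruhat cocycle $b(\ga,\xi)$ only by left and right multiplication by elements of $A$ (comparing the Iwasawa and $N^+$-sections of $G/P$), and since $b(\ga,\xi)\in b(\Ga)$ and $A\subseteq b(\Ga)=AM_\Ga$ by Proposition~\ref{prop.GR}, it lies in $AM_\Ga$; in particular its $M$-component lies in $M_\Ga$. Consequently, after collapsing the last coordinate by the right $AM_\Ga$-action, the $\Ga$-action on $\La^{(2)}\times(M_\Ga\ba M)$ is the product of the $\Ga$-action on $\La^{(2)}$ with the trivial action, so $\Ga\ba\tilde\Omega/AM_\Ga$ is identified, mod $m_\psi^{\BMS}$, with $(\Ga\ba\La^{(2)})\times(M_\Ga\ba M)$ carrying the product of the image of $\nu_\psi\otimes\nu_{\psi\circ\i}$ with counting measure.

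\emph{Ergodicity and $M$-symmetry.} Since $m_\psi^{\BMS}$ is $AM$-ergodic, $\Ga$ acts ergodically on $(\La^{(2)},\nu_\psi\otimes\nu_{\psi\circ\i})$, so any measurable function on $\Ga\ba\La^{(2)}$ is a.e.\ constant; combined with the previous paragraph, $f$ is $m_\psi^{\BMS}$-a.e.\ a function of the $M_\Ga\ba M$-coordinate. Hence $\Sigma$ lies, mod $m_\psi^{\BMS}$, in the finite $\sigma$-algebra $\mathcal C$ generated by the projection $\Ga\ba\tilde\Omega/AM_\Ga\to M_\Ga\ba M$, whose atoms are the finitely many non-null slabs $T_s$, $s\in M_\Ga\ba M$. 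Next, right translation by $m_0\in M$ preserves $m_\psi^{\BMS}$ and sends $\Sigma_\pm$ into itself (as $m_0$ normalizes $A$, $N^\pm$ and $\tilde\Omega$), so it preserves $\Sigma$ mod $m_\psi^{\BMS}$; by Lemma~\ref{lem.sub} this action factors through $M_\Ga\ba M$, where it is the right translation action, transitive on $\{T_s\}$.

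\emph{Conclusion.} Being a sub-$\sigma$-algebra of $\mathcal C$ invariant under this transitive action, $\Sigma$ has atoms that form an $M$-invariant partition of $\{T_s:s\in M_\Ga\ba M\}$ and are therefore permuted transitively by right translation. Taking $B\in\Sigma$ to be any one atom, the collection $\{B.s:s\in M_\Ga\ba M\}$ is exactly the set of atoms of $\Sigma$ (each occurring with the same multiplicity), whence any two distinct members are measurably disjoint and the $\sigma$-algebra they generate is $\Sigma$ mod $m_\psi^{\BMS}$. (If $\Sigma$ is trivial mod $m_\psi^{\BMS}$, take $B=\tilde\Omega$.) I expect the main obstacle to be the descent step, and inside it the point that the $M$-component of the Hopf cocycle takes values in $M_\Ga$ along $\La$: this is precisely where the structural input $b(\Ga)=AM_\Ga$ of Section~\ref{defyy} (Proposition~\ref{prop.GR}, ultimately Guivarc'h--Raugi and \cite{BQ}) is needed, together with care that all the identifications hold only $m_\psi^{\BMS}$-a.e., using $\nu_\psi(\La\cap N^+e^+)=1$.
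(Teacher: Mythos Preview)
Your argument is correct and takes a genuinely different route than the paper's. The paper proceeds abstractly: from $AM$-ergodicity it notes that the $\sigma$-algebra $\Sigma_1=\{B\cap\tilde\Omega:B=\Ga BAM\}$ is trivial mod $m_\psi^{\BMS}$, so for any $B\in\Sigma$ of positive measure the union $\bigcup_{s\in M_\Ga\ba M}B.s$ is conull; it then picks a partition $\{A_1,\dots,A_k\}$ of $\tilde\Omega$ into positive-measure $\Sigma$-sets permuted by $M_\Ga\ba M$ with $k$ \emph{maximal}, and sets $B=A_1$. Your approach instead computes that the $M$-component of the Hopf cocycle $\beta^{AM}_{\ga\xi}(e,\ga)=\sigma^{AM}(\ga,\xi)$ already lands in $M_\Ga$ (via $\sigma^{AM}(\ga,\xi)=a_{\ga\xi}\,b(\ga,\xi)\,a_\xi^{-1}$ together with $b(\Ga)=AM_\Ga$), thereby trivializing the $\Ga$-action on the $M/M_\Ga$-factor and identifying $\Sigma_0$ itself with the slab algebra $\cal C$ modulo $m_\psi^{\BMS}$; this is more informative (it pins down $\Sigma_0$, not just $\Sigma$) at the cost of the cocycle computation, and avoids the maximality argument. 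Two points worth making explicit in your write-up: the relevant quotient is $M/M_\Ga$ rather than $M_\Ga\ba M$ (right $AM_\Ga$-invariance gives left cosets), harmless here since $M/M^\circ$ is abelian and hence $M_\Ga\trianglelefteq M$; and the comparison of $\sigma^{AM}$ with $b$ requires both $\xi$ and $\ga\xi$ in $N^+e^+$, which holds for $\nu_\psi$-a.e.\ $\xi$ and all $\ga\in\Ga$ simultaneously by Corollary~\ref{full1} and countability of $\Ga$.
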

\begin{proof}
First, note that the $AM$-ergodicity of $m_{\psi}^{\BMS}$  implies that
the $\sigma$-algebra
$$
\Sigma_1:=\{B\cap\tilde\Omega : B\in\cal B\text{ such that }B=\Ga BA M\}
$$
is trivial mod $m_{\psi}^{\BMS}$. It follows that for any $B\in\Sigma_0$, and hence for any $B\in\Sigma$ by Lemma \ref{lem.sub}, with $m_{\psi}^{\BMS}(B)>0$, the union $\cup_{s\in M_\Ga\ba M}B.s$ is $m_{\psi}^{\BMS}$-conull.

Let $\cal P=\{A_1,\cdots,A_k\}$ be a partition of $\tilde \Omega$ with maximal $k$, among all partitions of $\Omega$ satisfying
\begin{enumerate}
\item
$A_i\in\Sigma$ and $m_{\psi}^{\BMS}(A_i)>0$,
\item
$\tilde\Omega=A_1\cup\cdots\cup A_k$ mod $m_{\psi}^{\BMS}$ and
\item
for any $s\in M_\Ga\ba M$, we have $A_i.s\in\{A_1,\cdots,A_k\}$ mod $m_{\psi}^{\BMS}$.
\end{enumerate}
It remains to set $B=A_1$ to prove the claim.
\end{proof}

\subsection{$\br$-ergodic decomposition of $\hat m_\psi$ on $\La^{(2)}\times\bb R\times M$.}\label{subsec.m}
Set $\La^{(2)}=(\La\times \La) \cap \F^{(2)}$. The action of $\Ga$ on $\La^{(2)}\times\bb R$ defined by
$$
\ga.(\xi,\eta,t)=(\ga\xi,\ga\eta,t+\psi(\log \beta^A_{\ga\xi}(e,\ga)))
$$
 is proper and cocompact, and  the measure 
$
d\tilde m_\psi:=e^{[\cdot,\cdot] _{\psi}}d\nu_\psi\,d\nu_{\psi\circ\op i} \,dt
$
on $\La^{(2)}\times\bb R$  descends to a finite $\br$-ergodic measure $m_\psi$ on $\Ga\ba\La^{(2)}\times\bb R$ 
(\cite[Thm.~3.2]{Samb2}, \cite[Thm.~A.2]{Car}). 
We denote by $d\hat m_\psi$ the finite measure on 
$$Z:=\Ga\ba\La^{(2)}\times\bb R\times M$$ induced by the $\Ga$-invariant
product measure $d\tilde m_\psi\,dm$ on $\La^{(2)}\times\bb R\times M$; here
 $\Ga$ acts on $\La^{(2)}\times\bb R\times M$ by
$$
\ga.(\xi,\eta,t,m)=(\ga\xi,\ga\eta,t+\psi(\log \beta_{\ga \xi}^A(e,\ga)),\beta^M_{\ga \xi}(e, \ga)m)
$$
where $(\xi, \eta)\in \La^{(2)}$, $t\in \br$ and $m\in M$.

Define the Borel map $\Psi : \tilde\Omega\to \La^{(2)}\times \bb R\times M$ by 
$$
\Psi(g)=(g^+, g^-,\psi(\beta_{g^+}^A(e,g)),\beta^M_{g^+}(e,g)).
$$
 Note that  for all $\ga\in\Ga$, $a\in A$ and $m\in M$,  $\Psi(\ga g am)=\ga\Psi(g)\tau_{\psi(\log a)}\tau_m$ 
 for $\tilde m_\psi^{\BMS}$-almost all $g\in \tilde
 \Omega$, where $\tau$ stands for the
 right translation action by elements of $\bb R\times M$.
By abuse of notation, let $\Psi : \Omega \to Z$ denote the  map induced by $\Psi$ and $\tau$ denote the action of $\bb R\times M$ on $Z$ induced by $\tau$.

Recalling that $\Omega=\bigsqcup_{\mathcal E_0\in \mathfrak Y_\Ga} (\Om\cap \E_0)$,
we set 
$$Z_{\E_0}:=\Psi (\Omega\cap \E_0)\quad\text{ for each $\E_0\in \mathfrak Y_{\Ga_0}$.}$$
Hence the collection $\{Z_{\E_0}:\E_0\in \yg\}$ gives a measurable partition for $(Z, \hat m_\psi)$.

\begin{prop} \label{lem.AI}
For each $\E_0\in \yg$,
the restriction $\hat m_\psi |_{Z_{\E_0}}$ is $\br$-ergodic, and
$\hat m_\psi=\sum_{\E_0\in \yg} \hat m_\psi |_{Z_{\E_0}}$ is an $\br$-ergodic decomposition.
In particular,
 $\npc|_{\La_0}$ is $\Ga$-ergodic and $\npc=\sum_{\La_0\in \cal Y_\Ga} \npc |_{{\La_0}}$ is a $\Ga$-ergodic decomposition.
 \end{prop}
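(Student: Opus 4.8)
The plan is to reduce the $\br$-ergodicity of each $\hat m_\psi|_{Z_{\E_0}}$ to the structure of invariant sets established in the previous subsection, and then to deduce the consequences for $\npc$ by unwinding the definitions of $Z$, $\Psi$, and $\npc$. First I would make the identification $Z \simeq \Ga\ba\tilde\Omega \,/\, A$ precise: the Borel map $\Psi:\tilde\Omega\to\La^{(2)}\times\br\times M$ is equivariant for the $\Ga$-action on both sides and intertwines the right $A$-action on $\tilde\Omega$ with the right $\br$-translation $\tau$ on the $\br$-factor (and right $M$-translation with right $M$-translation), so $\Psi$ descends to a Borel isomorphism between $(\Ga\ba\tilde\Omega/A,\ m_\psi^{\BMS})$ and $(Z,\hat m_\psi)$ that carries the $A$-action on the source to the $\br$-action on the target. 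Under this isomorphism, an $\br$-invariant Borel subset of $Z$ pulls back to an $A$-invariant, $\Ga$-invariant Borel subset of $\tilde\Omega$, i.e.\ to an element of the $\sigma$-algebra generated by $\Sigma_+$ and $\Sigma_-$ — here one uses that the $\br$-factor of $Z$ has no further $N^\pm$-type directions to worry about, so $\br$-invariance in $Z$ corresponds exactly to being a set of the form $\Ga B A$. Thus the $\sigma$-algebra of $\br$-invariant sets in $(Z,\hat m_\psi)$ corresponds to (a sub-$\sigma$-algebra of) $\Sigma$ from the previous subsection, transported through $\Psi$.

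Next I would invoke Corollary \ref{cor.dec}: modulo $m_\psi^{\BMS}$, the $\sigma$-algebra $\Sigma$ is the finite $\sigma$-algebra generated by $\{B.s : s\in M_\Ga\ba M\}$ for a single atom $B$, and distinct translates $B.s$ are measurably disjoint. The atoms $B.s$ of this finite $\sigma$-algebra are therefore exactly the $\br$-ergodic components (equivalently, the $A$-ergodic components of $m_\psi^{\BMS}$, transported by $\Psi$). I then need to match these atoms with the sets $Z_{\E_0}$, $\E_0\in\yg$. This is a bookkeeping step: by subsection \ref{defyy}, $\yg$ is in bijection with $M_\Ga\ba M$ via $s\mapsto \La_0 s$ (equivalently via the $\pc$-minimal sets $\E_0\subset\Ga\ba G$), and the preimage $\Psi^{-1}(Z_{\E_0}) = \tilde\Omega\cap \tilde{\E_0}$ is precisely a right $M_\Ga$-coset translate of a fixed $\Ga AM_\Ga$-invariant set, i.e.\ an element of $\Sigma_0$. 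Since $\Sigma\subset\Sigma_0$ mod $m_\psi^{\BMS}$ (Lemma \ref{lem.sub}) and the reverse containment $\Sigma_0\subset\Sigma$ is immediate ($\Sigma_0\subset\Sigma_+\cap\Sigma_-$, because $M_\Ga$-invariance is stronger than $N^\pm$-invariance after passing to the relevant quotients), we get $\Sigma=\Sigma_0$ mod $m_\psi^{\BMS}$; hence the atoms of $\Sigma$ are exactly the sets $\Psi^{-1}(Z_{\E_0})$, and so each $\hat m_\psi|_{Z_{\E_0}}$ is $\br$-ergodic with $\hat m_\psi=\sum_{\E_0\in\yg}\hat m_\psi|_{Z_{\E_0}}$ an $\br$-ergodic decomposition.

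Finally, for the statement about $\npc$: the measure $\npc$ on $\fc=G/\pc$ is the $M/\mc$-invariant lift of $\nu_\psi$, and pushing forward $\hat m_\psi$ along the projection $Z\to \Ga\ba(\La^{(2)}\times M)$ that forgets the $\br$-coordinate, followed by the further projection forgetting $\eta\in\La$ (which on the measure level corresponds to integrating out the $d\nu_{\psi\circ\i}(\eta)$ factor against the finite total mass, legitimate since $m_\psi$ is finite), recovers $\npc$ on $\Ga\ba\tilde\La$, hence $\npc$ itself on $\fc$ via the duality $\Ga\ba\tilde\La\leftrightarrow\cal Y_\Ga$. Under these projections, $\br$-invariant sets in $Z$ map to $\Ga$-invariant sets downstairs and the partition $\{Z_{\E_0}\}$ maps to the partition $\{\La_0\}_{\La_0\in\cal Y_\Ga}$; since the projection of an ergodic measure is ergodic for the projected action, $\npc|_{\La_0}$ is $\Ga$-ergodic and $\npc=\sum_{\La_0}\npc|_{\La_0}$ is a $\Ga$-ergodic decomposition. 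The main obstacle I anticipate is the first paragraph: carefully justifying that $\br$-invariant sets in $(Z,\hat m_\psi)$ correspond under $\Psi$ to sets in $\Sigma$ — one must check that the right $A$-action on $\tilde\Omega$ is genuinely intertwined with $\tau$ on $Z$ (the cocycle identities and equivariance for $\beta^A,\beta^M$ from Section 2 are what make $\Psi$ equivariant, but $\Psi$ is only defined $m_\psi^{\BMS}$-a.e.\ and one must control the $M$-coordinate $m_g$ carefully), and that no information is lost in passing from $\Sigma$ (defined via $\Sigma_\pm$) to the coarser quotient; this is where the full force of Lemma \ref{lem.sub} and Corollary \ref{cor.dec} is needed.
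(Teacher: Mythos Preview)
Your outline is close to the paper's argument and correctly pivots on Corollary~\ref{cor.dec}, but there is a genuine gap in the first paragraph that you flag as an obstacle yet do not actually address. An $\br$-invariant Borel subset of $Z$ pulls back under $\Psi$ to a $\Ga A$-invariant subset of $\tilde\Omega$, \emph{not} to an element of $\Sigma$. Sets in $\Sigma_\pm$ are right $\Ga A N^\pm$-invariant, and sets in $\Sigma$ agree with both a $\Sigma_+$-set and a $\Sigma_-$-set modulo null sets; so $\Sigma$ sits \emph{inside} the $\sigma$-algebra of $\Ga A$-invariant sets, not the other way around. Your remark that ``the $\br$-factor of $Z$ has no further $N^\pm$-type directions to worry about'' does not help: passing to $Z$ collapses only the $\exp(\ker\psi)$-direction of $A$, while the $N^\pm$-directions survive in the $(\xi,\eta)$-coordinates and are not quotiented away. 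What is missing is a Hopf argument: for $f\in C(Z)$ one forms the Birkhoff average $f_\sharp$ along $\tau_t$, and then shows that $f_\sharp\circ\Psi$ coincides $m_\psi^{\BMS}$-a.e.\ with an $N$-invariant function (via the forward average and contraction along $N$) and with an $N^+$-invariant function (via the backward average), hence is $\Sigma$-measurable. This is exactly the step the paper supplies, using the one-parameter direction $a_t=\exp tu$ with $\psi(u)=1$.

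Your second paragraph compounds the issue: the claimed inclusion $\Sigma_0\subset\Sigma$ is false. Right $M_\Ga$-invariance has nothing to do with right $N^\pm$-invariance ($M_\Ga$ normalizes $N^\pm$ but does not contain them), and in fact $\Sigma_0$ is not even a finite $\sigma$-algebra: any $\Ga$-invariant Borel subset of $\La^{(2)}$, lifted to $\tilde\Omega$, lies in $\Sigma_0$. So one cannot conclude $\Sigma=\Sigma_0$ and read off the atoms that way. The paper instead first establishes $\br$-ergodicity of each $\hat m_\psi|_{\Psi(B.s)}$ via the Hopf argument above, and only then matches $\Psi(B.s)$ with $Z_{\E_0}$ by the soft observation that each $Z_{\E_0}$ is $\br$-invariant with positive measure, hence must coincide (mod null sets) with one of the ergodic pieces.
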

\begin{proof} By Corollary \ref{cor.dec}, $\Sigma$ is generated by $\{B.s : s\in M_\Ga\ba M\}$ mod $m_{\psi}^{\BMS}$ for some $B\in\Sigma$. We first claim that $\hat m_\psi|_{\Psi (B.s)}$ is $\br$-ergodic for each $s\in M_\Ga\ba M$.

Let $f\in C(Z)$ be arbitrary.
The Birkhoff average $f_\sharp : Z\to\bb R$ is defined $\hat m_\psi$-a.e. by
$$
f_\sharp(y):=\lim_{T\to\infty}\frac{1}{T}\int_0^T f(y\tau_t)\,dt=\lim_{T\to\infty}\frac{1}{T}\int_{0}^T f(y\tau_{-t})\,dt.
$$
Note that $f_\sharp$ is well defined by the Birkhoff ergodic theorem and is ${\bb R}$-invariant.
Hence, $f_\sharp\circ\Psi$ is defined $m_{\psi}^{\BMS}$-a.e. The desired ergodicity follows from the Birkhoff ergodic theorem
if we show  that $f_\sharp\circ \Psi$ is constant $m_{\psi}^{\BMS}$-a.e. on each $B.s$.
Let $u\in \inte \L_\Ga$ be the unique vector such that $\psi(u)=\psi_\Ga(u)=1$ and let $a_t=\exp tu$. Observing that $f\circ \Psi$ is uniformly continuous on each $xAN\cap \Omega$ whenever $\Psi$ is continuous at $x$
and that $f(\Psi(x)\tau_t)=f(\Psi(xa_t))$ for all $t\in \br$, it is a standard Hopf argument to show that $f_\sharp\circ\Psi$ coincides with $N^\pm$-invariant functions 
$m_{\psi}^{\BMS}$-a.e.  Hence $f_\sharp\circ\Psi$ is $\Sigma$-measurable, implying that
$f_\sharp\circ \Psi$ is constant $m_{\psi}^{\BMS}$-a.e. on each $B.s$.
Therefore this proves the claim.

For each $\E_0\in \yg$, $ \hat m_\psi (\Psi(B.s)\cap Z_{\E_0})>0$  for some $s\in M_\Ga\ba M$.
It follows from the $\br$-ergodicity of $\hat m_\psi |_{\Psi (B.s)} $ that $\hat m_\psi |_{\Psi (B.s)}=\hat m_\psi|_{Z_{\E_0}}$.
Therefore the proposition is proved.
\end{proof}

The measure $m_\psi^{\BMS}$ disintegrates over $\hat m_\psi$ via the projection $\Ga\ba \La^{(2)}\times A\times M\to \Ga\ba \La^{(2)}\times \bb R\times M$, where each conditional measure is the Lebesque measure on $\exp (\op{ker}\psi)$.

\noindent
\textbf{Proof of Theorem \ref{thm.AE}.}
Since $d m_\psi^{\BMS} |_{\E_0}=d\hat m_\psi|_{Z_{\E_0}} \,d\op{Leb}_{\op{ker}\psi}$, the 
$\br$-ergodicity of  $\hat m_\psi|_{Z_{\E_0}}$ proved in Proposition \ref{lem.AI} implies the $A$-ergodicity of
 $m_\psi^{\BMS} |_{\E_0} $. $\qed$

\subsection{The set of strong Myrberg limit points}
In \cite{LO}, we defined Myrberg limit points of $\Ga$.

\begin{Def} We now define the set of {\it strong} Myrberg limit points as follows:
 \begin{multline}\label{My} \spa=\{\xi\in \La\cap N^+e^+: \text{for each $\E_0\in \yg$, there exist} \\ \text{
 $\eta\in \La$ and  $m\in M$ such that $Z_{\E_0}=\overline{\Gamma (\xi, \eta, 0, m)\br_{+}}$}  \}. \end{multline} 
\end{Def}

Since $\hat m_\psi|_{Z_{\E_0}}$ is $\bb R$-ergodic and finite for each $\E_0\in \yg$, the Birkhoff ergodic theorem for the $\br$-action implies:
\begin{cor} \label{full} We have $\nu_\psi(\spa)=1$.
\end{cor}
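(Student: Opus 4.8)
The plan is to deduce this from Proposition \ref{lem.AI} together with the Birkhoff ergodic theorem for the $\br$-action on $(Z,\hat m_\psi)$. First I would observe that, since each $\hat m_\psi|_{Z_{\E_0}}$ is a finite $\br$-ergodic measure, the Birkhoff ergodic theorem (applied to a countable dense family of functions in $C_c(Z_{\E_0})$, or more directly to a metric generating the topology) gives that $\hat m_\psi|_{Z_{\E_0}}$-almost every point $y\in Z_{\E_0}$ is generic, hence has $\overline{y\tau_{\br_+}}$ equal to the support of $\hat m_\psi|_{Z_{\E_0}}$; here one uses that $Z_{\E_0}$ is second countable and that the measure has full support in $Z_{\E_0}$ by construction. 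Thus for each fixed $\E_0\in\yg$ there is a $\hat m_\psi|_{Z_{\E_0}}$-conull set $G_{\E_0}\subset Z_{\E_0}$ of points whose forward $\tau$-orbit closure equals $Z_{\E_0}$.

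Next I would transfer this downstairs via the map $\Psi:\Omega\to Z$ and back to $\La$ via the $\nu_\psi$-coordinate. Lifting $G_{\E_0}$ through $\Psi$ and recalling that $m_\psi^{\BMS}|_{\E_0}$ disintegrates over $\hat m_\psi|_{Z_{\E_0}}$ with Lebesgue conditional measures on $\exp(\ker\psi)$, the set $\Psi^{-1}(G_{\E_0})$ is $m_\psi^{\BMS}|_{\E_0}$-conull in $\Omega\cap\E_0$. Pushing forward to $\La$ by $\Psi(g)\mapsto g^+$ and using the product structure \eqref{bmss} of $m_\psi^{\BMS}$ (so that the $g^+$-marginal of $m_\psi^{\BMS}$ is $\nu_\psi$), Fubini gives a $\nu_\psi$-conull set $\La_{\E_0}\subset\La\cap N^+e^+$ of points $\xi$ for which there exist $\eta\in\La$ and $m\in M$ with $(\xi,\eta,0,m)$ lying in (a point of the $\Ga$-orbit meeting) $G_{\E_0}$, i.e. with $Z_{\E_0}=\overline{\Gamma(\xi,\eta,0,m)\br_+}$. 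Since $\yg$ is finite (it is in bijection with $M_\Ga\ba M$, and $M_\Ga$ is closed of finite index-type, $M_\Ga/M^\circ_\Ga\cong(\z/2\z)^p$—in any case $\#\yg<\infty$ as used throughout), the intersection $\bigcap_{\E_0\in\yg}\La_{\E_0}$ is still $\nu_\psi$-conull, and by definition \eqref{My} it is contained in $\spa$. Hence $\nu_\psi(\spa)=1$.

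The only genuinely delicate point is the first step: promoting $\br$-ergodicity to "almost every orbit closure is the full support." This requires knowing $\hat m_\psi|_{Z_{\E_0}}$ has full support in $Z_{\E_0}$ (true by construction of $Z_{\E_0}=\Psi(\Omega\cap\E_0)$ and the fact that $\nu_\psi$ has full support in $\La$), finiteness (proved in subsection \ref{subsec.m}), and second countability of $Z_{\E_0}$; given these, the standard argument—apply Birkhoff to a countable family separating points of a countable basis—goes through. Everything else is routine Fubini bookkeeping through $\Psi$ and the disintegration, plus the finiteness of $\yg$.
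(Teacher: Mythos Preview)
Your proposal is correct and follows exactly the route the paper indicates: the paper's proof is a single sentence invoking the finiteness and $\br$-ergodicity of each $\hat m_\psi|_{Z_{\E_0}}$ together with the Birkhoff ergodic theorem, and you have simply unpacked that sentence (Birkhoff gives a.e.\ dense forward orbits in each $Z_{\E_0}$, Fubini through the product form of $\hat m_\psi$ transfers this to a $\nu_\psi$-conull set of $\xi$'s for each $\E_0$, and the finite intersection over $\E_0\in\yg$ finishes it). The only cosmetic slip is in your aside about $\#\yg<\infty$: it is $M_\Gamma/M^\circ$ (not $M_\Gamma/M_\Gamma^\circ$) that is identified with $(\z/2\z)^p$, but in any case $\#\yg=[M:M_\Gamma]\le[M:M^\circ]<\infty$ since $M$ is compact.
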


The same proof as the proof of \cite[Prop. 8.2]{LO}
shows that if $g\in \E_0$ and $g^+\in \spa$,
$$\limsup \Gamma\ba \Gamma gA^+=\Omega\cap \E_0.$$
Hence Corollary \ref{full} implies  (cf. \cite[Coro 8.12]{LO}):
\begin{cor}\label{full3}
For $m_\psi^{\BMS}|_{\E_0}$-almost all $x\in \E_0\cap \Omega$,
each $xA^+$ and $xw_0 A^+$ is dense in  $\E_0\cap \Omega$.
\end{cor}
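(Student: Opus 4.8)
The plan is to feed the two facts recorded just above the statement---that $\nu_\psi(\spa)=1$ (Corollary \ref{full}) and that $\limsup\Ga\ba\Ga gA^+=\Om\cap\E_0$ whenever $g\in\E_0$ and $g^+\in\spa$---into the scheme of \cite[Prop.~8.2, Coro.~8.12]{LO}, now carried out inside the single $\pc$-minimal piece $\E_0$ rather than in all of $\Om$. First I would record the ergodicity-free geometric reduction: fix $x=\Ga g$ with $g\in\tilde\Om$, $\Ga g\in\E_0$ and $g^+\in\spa$. Since $\E_0$ is right $\pc$-invariant (hence $A$-invariant) and $\Om$ is right $AM$-invariant, the orbit $xA^+$ lies in the closed set $\Om\cap\E_0$; combined with $\limsup\Ga\ba\Ga gA^+=\Om\cap\E_0$ this gives $\overline{xA^+}=xA^+\cup\limsup\Ga\ba\Ga gA^+=\Om\cap\E_0$, i.e.\ $xA^+$ is dense in $\Om\cap\E_0$. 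So everything reduces to showing that $m_\psi^{\BMS}|_{\E_0}$-almost every $x\in\Om\cap\E_0$ admits such a lift $g$.

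For this I would use the Hopf parametrization \eqref{eq.j0}: restricted to $\{g\in N^+P:\Ga g\in\E_0\}$, the measure $\tilde m_\psi^{\BMS}$ equals $e^{[\cdot,\cdot]_\psi}$ times the product $d\nu_\psi(g^+)\,d\nu_{\psi\circ\i}(g^-)\,da\,dm$ over the corresponding slice of $\La^{(2)}\times AM$, so its $g^+$-marginal is absolutely continuous with respect to $\nu_\psi$. Since $\nu_\psi(\spa)=1$ and, under the standing assumption $\Ga\cap\inte A^+M\neq\emptyset$, also $\nu_\psi(\La\cap N^+e^+)=1$, the Borel set $\widetilde S=\{g\in\tilde\Om:\Ga g\in\E_0,\ g^+\in\spa\}$ is $\tilde m_\psi^{\BMS}$-conull in $\{g\in\tilde\Om:\Ga g\in\E_0\}$. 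Its $\Ga$-saturation is still $\Ga$-invariant and $\tilde m_\psi^{\BMS}$-conull, hence descends to an $m_\psi^{\BMS}|_{\E_0}$-conull subset of $\Om\cap\E_0$ every point of which lifts as in the first paragraph. (That this saturation is measurable and that a $\Ga$-invariant $\tilde m_\psi^{\BMS}$-conull set pushes down to an $m_\psi^{\BMS}$-conull set are routine; cf.\ \cite[Prop.~B.5]{Zi}.) This settles the $xA^+$ claim.

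For $xw_0A^+$ I would run the same argument with the backward Weyl chamber flow. The proof of Proposition \ref{lem.AI} already shows that the finite $\br$-ergodic measure $\hat m_\psi|_{Z_{\E_0}}$ equidistributes backward $\br$-orbits as well (its Birkhoff averages equal $\lim_{T\to\infty}\tfrac{1}{T}\int_0^T f(y\tau_{-t})\,dt$), so adapting \cite[Prop.~8.2]{LO} with $\br_-$ in place of $\br_+$ gives $\limsup\Ga\ba\Ga gw_0A^+=\Om\cap\E_0$ for $g$ whose $g^-$-coordinate ranges over a $\nu_{\psi\circ\i}$-conull subset of $\La$; here one uses the $w_0$-symmetry interchanging $\nu_\psi$ and $\nu_{\psi\circ\i}$ together with Corollary \ref{full} for the opposite datum, transfers conullity through the $g^-$-marginal exactly as in the middle paragraph, and again invokes that $\Om\cap\E_0$ is closed and $A$-invariant so that $\overline{xw_0A^+}=\Om\cap\E_0$. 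The step I expect to be the real obstacle is not the formal bookkeeping but verifying that the adaptation of \cite[Prop.~8.2]{LO} genuinely produces $\Om\cap\E_0$ (rather than a larger or $w_0$-twisted set) as the $\limsup$: this is exactly where the \emph{strong} Myrberg property \eqref{My}, designed to resolve each $\pc$-minimal piece $\E_0$ separately, must be used, and it is the one place where the single-piece refinement is not purely cosmetic.
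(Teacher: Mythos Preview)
Your proposal is correct and follows essentially the same route as the paper. The paper's proof is a one-line deferral to \cite[Prop.~8.2, Coro.~8.12]{LO} together with Corollary~\ref{full}, and what you have written is precisely a careful unpacking of that deferral inside a single $\pc$-minimal piece $\E_0$: pass from $\nu_\psi(\spa)=1$ to conullity of the good set under $m_\psi^{\BMS}|_{\E_0}$ via the $g^+$-marginal in the Hopf coordinates, then combine the $\limsup$ statement with the closed $A$-invariance of $\Om\cap\E_0$; the $xw_0A^+$ part is handled by the $w_0$-symmetry/backward-orbit argument exactly as in \cite{LO}.
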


Let $\Pi$ denote the set of all simple roots of $\frak g$ with respect to $\fa^+$.
\begin{Def} \rm
 For a sequence $a_n\in A^+$, we write
 $a_n\to \infty$ regularly in $ A^+$ or $\log a_n\to \infty$ regularly in $\fa^+$, if  $\alpha(\log a_n)\to\infty$ as $n\to\infty$ for all $\alpha\in\Pi$.  \end{Def}

The following is an important property of Anosov groups:
\begin{lem}\label{Anre}
Let $\Ga$ be Anosov. For any $g, h\in G$ and a sequence $\ga_n \to \infty$ in $\Ga$, $\mu(g\ga_n h)\to \infty$ regularly in $A^+$.
\end{lem}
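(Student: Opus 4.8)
\textbf{Proof proposal for Lemma \ref{Anre}.}

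The plan is to reduce the statement to the known regularity behaviour of the Cartan projection along sequences in $\Ga$ itself, and then absorb the fixed factors $g$ and $h$. First I would recall the defining feature of an Anosov subgroup that is actually relevant here: $\Ga$ being Anosov with respect to the minimal parabolic $P$ is equivalent to the statement that $\alpha(\mu(\ga_n))\to\infty$ for every simple root $\alpha\in\Pi$ whenever $\ga_n\to\infty$ in $\Ga$ (this is one of the standard equivalent characterizations, e.g. from \cite{KLP}). So the content of the lemma is really the stability of this divergence property under left and right multiplication by a fixed group element.

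The key step is a Lipschitz-type estimate for the Cartan projection: there is a constant $C=C(g,h)$ such that $\|\mu(g x h)-\mu(x)\|\le C$ for all $x\in G$, with $C$ depending only on $\mu(g)$ and $\mu(h)$ (this follows from the Cartan decomposition together with the fact that $\mu$ is $1$-Lipschitz with respect to a suitable left-right $K$-invariant metric, or from the submultiplicativity $\|\mu(xy)-\mu(x)\|\le \|\mu(y)\|$). Given this, for any $\alpha\in\Pi$ we get $|\alpha(\mu(g\ga_n h))-\alpha(\mu(\ga_n))|\le \|\alpha\|\cdot C$, so $\alpha(\mu(g\ga_n h))\to\infty$ because $\alpha(\mu(\ga_n))\to\infty$. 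One small point to check is that $\ga_n\to\infty$ in $\Ga$ forces $\mu(\ga_n)\to\infty$ in norm (true since $\Ga$ is discrete), so that the Anosov regularity hypothesis applies to the sequence $\ga_n$.

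The main obstacle, such as it is, is purely bookkeeping: making sure the uniform bound $\|\mu(gxh)-\mu(x)\|\le C$ is stated and invoked correctly, and that the equivalence "Anosov $\Leftrightarrow$ $\alpha(\mu(\ga_n))\to\infty$ for all $\alpha\in\Pi$" is cited in the exact form needed. Once those two ingredients are in place the proof is a two-line triangle-inequality argument. I would therefore structure it as: (i) cite the regularity characterization of Anosov subgroups to get $\alpha(\mu(\ga_n))\to\infty$; (ii) invoke the sub-additivity estimate $\|\mu(gxh)-\mu(x)\|\le \|\mu(g)\|+\|\mu(h)\|$ to transfer this to $g\ga_n h$; (iii) conclude $\mu(g\ga_n h)\to\infty$ regularly. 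No genuinely hard step is expected; the lemma is a soft consequence of the definitions.
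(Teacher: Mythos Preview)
Your proposal is correct and matches the paper's approach. The paper's own argument is a one-line remark that the lemma follows from the fact that the limit cone $\mathcal L_\Ga$ is contained in $\inte\fa^+\cup\{0\}$ (with a reference to \cite[Thm.~4.3]{LO}); this limit-cone condition is exactly equivalent to the regularity characterization $\alpha(\mu(\ga_n))\to\infty$ for all $\alpha\in\Pi$ that you invoke from \cite{KLP}, and the passage from $\ga_n$ to $g\ga_n h$ via the sub-additivity estimate $\|\mu(gxh)-\mu(x)\|\le\|\mu(g)\|+\|\mu(h)\|$ is the same in both cases.
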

This lemma is a consequence of the fact that the limit cone of $\Gamma$ is contained in $\inte \fa^+ \cup\{0\}$ (cf.
\cite[Thm.~4.3]{LO} for references).

 In the Cartan decomposition $g=k_1 (\exp \mu(g)) k_2\in KA^+K$,
 if $\mu(g)\in \inte \fa^+$, then $k_1, k_2\in K$ are determined uniquely up to mod $M$, more precisely,
if $g=k_1'(\exp \mu(g)) k_2'$, then there exists $m\in M$ such that $k_1=k_1'm$ and $k_2=m^{-1}k_2'$. We write 
$$\kappa_1(g):=[k_1]\in K/M\quad \text{ and } \quad \kappa_2(g):=[k_2]\in M\ba K.$$

\begin{Def}\rm
Let $o=[K]\in G/K$ and let $g_n\in G$ be a sequence. A sequence $g_n(o) \in G/K$ is said to converge to $\xi\in \cal F$ 
if $\mu(g_n) \to\infty$ regularly in $\fa^+$ and $\lim\limits_{n\to\infty}\kappa_1(g_n)=\xi$; we write
$\lim_{n\to \infty} g_n(o)=\xi$.
\end{Def}

Recall the map $\op{j}$ from \eqref{eq.j0}:
\begin{lem}\label{lem.flat} Let $\cal E_0\in\frak Y_\Ga$ and
$\tilde{\cal E}_0\subset G$ be its $\Gamma$-invariant lift.
There exists $s_0\in M/M_\Ga$ such that
$$\op{j}(\tilde \Omega\cap \tilde {\cal E}_0\cap N^+P)=
\{(\xi,\eta,ams_0)\in\La^{(2)}\times AM  : \xi\in N^+e^+,am\in AM_\Ga \}.$$
\end{lem}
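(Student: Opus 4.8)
The plan is to identify the image $\op{j}(\tilde\Omega\cap\tilde{\cal E}_0\cap N^+P)$ inside $\La^{(2)}\times AM$ by exploiting the description of $\op{j}$ on $N^+P$ from \eqref{eq.j0}, together with the fact from Corollary \ref{jordan} and Proposition \ref{prop.GR} that $b(\Ga)=AM_\Ga$ is exactly the subgroup of $AM$ generated by the $AM$-valued Bruhat cocycle values. First I would fix a base point: choose $g_0\in \tilde{\cal E}_0$ with $g_0^\pm\in\La$ (such $g_0$ exists because $\tilde{\cal E}_0$ projects to a $\pc$-minimal subset of $\Ga\ba G$, which sits inside $\Omega$), and after right-translating by $N^+AN$ — which does not change the $\pc$-coset orbit structure — arrange $g_0\in N^+P$ and $g_0^+\in N^+e^+$. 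Set $s_0\in M/M_\Ga$ to be the class determined by the $M$-component of $\op{j}(g_0)$, i.e. $\op{j}(g_0)=(g_0^+,g_0^-,a_0m_0)$ and $s_0:=[m_0]\in M/M_\Ga$; the content of the lemma is that this class is the only one that appears.

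The key steps, in order: (i) Since $\tilde{\cal E}_0$ is right $\pc$-invariant and $\pc=M^\circ AN$, the $AM$-component of $\op{j}$ along a single $\pc$-orbit sweeps out $AM^\circ$, so $\op{j}(\tilde{\cal E}_0\cap N^+P)$ is contained in $\La^{(2)}\times (\text{something})$ with $M$-component a union of $M^\circ$-cosets. (ii) Given two points $g,g'\in\tilde\Omega\cap\tilde{\cal E}_0\cap N^+P$, I would connect them: by $\Gamma$-invariance and by moving along $N^+$ and $N$ directions inside $\Omega\cap\E_0$ (staying in the $\pc$-minimal set, using that it is $AN$-minimal, cf.\ the remark after \eqref{pg}), one obtains a chain of Bruhat-cocycle relations; by the equivariance and cocycle identities for $\beta^{AM}$ in \eqref{eq.BC}, the difference of the $AM$-components of $\op{j}(g)$ and $\op{j}(g')$ lies in the group generated by the relevant $b(\ga,\xi)$'s and by $AM^\circ$ — which by Lemma \ref{lem.xi}, Lemma \ref{lem.spec} and Corollary \ref{jordan} equals $b(\Ga)=AM_\Ga$. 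Hence the $M$-component stays in the single coset $M_\Ga s_0$, giving the inclusion "$\subseteq$". (iii) For the reverse inclusion, given any $(\xi,\eta,ams_0)$ with $\xi\in N^+e^+$ and $am\in AM_\Ga$, I would realize it as $\op{j}(g)$ for some $g\in \tilde\Omega\cap\tilde{\cal E}_0\cap N^+P$: first pick $g_1\in N^+P$ with $g_1^+=\xi$, $g_1^-=\eta$ (possible since $(\xi,\eta)\in\La^{(2)}$ with $\xi\in N^+e^+$, via the homeomorphism below \eqref{eq.j0}), then adjust the $AM$-component by right-multiplying by a suitable element; the point lands in $\tilde{\cal E}_0$ precisely because $AM_\Ga$ is the stabilizer-type group governing which $\pc$-minimal set one is in, i.e.\ $P_\Ga=M_\Ga AN$ acts trivially on $\E_0$ while $M/M_\Ga$ permutes the $\E_0$'s transitively, matching $P/P_\Ga\cong M/M_\Ga$.

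The main obstacle I expect is step (ii): carefully bookkeeping how moving along $N^+$- and $N$-orbits within $\Omega\cap\E_0$ translates, under $\op{j}$, into products of Bruhat cocycle values, and confirming that the resulting displacements in $AM$ generate exactly $b(\Ga)$ and not something larger or smaller. This requires pinning down that the connecting path can be taken inside $\tilde{\cal E}_0$ (using $AN$-minimality of the $\pc$-minimal set, which follows from \cite[Thm.~2]{GR} as noted in the excerpt) and that the antipodality of $\Ga$ (Lemma \ref{ant}, Corollary \ref{full1}) keeps all intermediate points in $N^+e^+$ so the cocycles are defined. Once that is in place, Proposition \ref{prop.GR}(3) and Corollary \ref{jordan} close the argument, and the matching of cardinalities $\#\mathfrak Y_\Ga=[P:P_\Ga]=\#(M/M_\Ga)$ from subsection \ref{defyy} guarantees that distinct cosets $s_0$ correspond to distinct $\E_0$, so the $s_0$ in the statement is well defined.
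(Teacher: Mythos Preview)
Your outline would work, but it is considerably more involved than the paper's argument, and in particular your ``main obstacle'' (step (ii), the connecting-path argument producing Bruhat-cocycle displacements) turns out to be unnecessary.

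The paper proceeds differently and more directly. It exploits the standing assumption $\Ga\cap\inte A^+M\neq\emptyset$, which forces $e^\pm\in\La$; hence one can take the base point to be the identity: there is some $s_0\in M$ with $s_0\in\tilde{\cal E}_0$, and then $\op{j}(s_0)=(e^+,e^-,s_0)$. Acting on the left by $\ga\in\Ga\cap N^+P$ sends this to $(\ga^+,\ga^-,\beta_{e^+}^{AM}(\ga^{-1},e)s_0)$, and the $M$-part of $\beta_{e^+}^{AM}(\ga^{-1},e)$ is exactly the $M$-component $m$ in the Bruhat decomposition $\ga\in N^+amN$ (Example~\ref{ex}). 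The paper then invokes \cite[Prop.~4.9(a)]{BQ} directly: $M_\Ga$ equals the closure of $\{m\in M:\Ga\cap N^+AmN\neq\emptyset\}$. Combining this with right $M_\Ga AN$-invariance of $\tilde{\cal E}_0$ and density of $\{\ga^+:\ga\in\Ga\cap N^+P\}$ in $\La$ already gives the inclusion $\supseteq$. The reverse inclusion $\subseteq$ then follows for free from the partition $\tilde{\cal E}=\bigsqcup_{\cal E_0\in\mathfrak Y_\Ga}\tilde{\cal E}_0$, the injectivity of $\op{j}$ on $N^+P$, and the cardinality match $\#\mathfrak Y_\Ga=[M:M_\Ga]$ --- no chains of $N^\pm$-moves are needed.

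So your route via Corollary~\ref{jordan} and Lemma~\ref{lem.spec} encodes essentially the same information as \cite[Prop.~4.9(a)]{BQ}, but packaged for a harder task than is actually required here; the paper's choice of base point $(e^+,e^-)$ and direct appeal to \cite{BQ} short-circuits your step~(ii) entirely.
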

\begin{proof}
Recall that $\Ga\cap\op{int}A^+M\neq\emptyset$ and hence $e^\pm\in\La$.
In particular, $\op{j}(\tilde\Om\cap\tilde{\cal E}_0\cap N^+P)$ contains an element of the form $(e^+,e^-,s_0)\in \La^{(2)}\times AM$ for some $s_0\in M$. 
Note that for all $\ga\in\Ga\cap  N^+P$, we have
$$
\ga.(e^+,e^-,s_0)=(\ga^+,\ga^-,\beta_{e^+}^{AM}(\ga^{-1},e)s_0).
$$

Since $\Ga\cap \inte A^+M\ne\emptyset$,
 $M_\Ga$ is equal to the closure of $\{m\in M: \G\cap N^+ m AN \ne \emptyset\}$ by \cite[Prop. 4.9(a)]{BQ}. Recall also that
for $\ga\in \Ga\cap N^+ mAN$,
$\beta_{e^+}^M(\ga^{-1},e)=m$.
Therefore, using the fact that $\tilde{\cal E}_0$ is right $M_\Ga AN$-invariant, we deduce that the set $\op{j}(\tilde \Omega\cap \tilde {\cal E}_0\cap N^+P)$ contains
$$\{(\ga^+,\eta, am s_0)\in \La^{(2)}\times AM:\ga\in\Ga\cap N^+P, am\in AM_\Ga \}.
$$
This proves the claim, since
$\{\ga^+\in \F:\ga\in \G\cap N^+P\}$ is dense in $\La$.
\end{proof}
\begin{lemma}\label{lem.C2} Let $p\in G/K$ and $\eta\ne \xi_0\in \La$.
For any $\xi\in \spa -\{\eta\}$, there exists an infinite sequence $\ga_i\in\Ga$ such that
\be\label{pp} \lim_{i\to \infty} \ga_i^{-1}p= \eta, \;\; \lim_{i\to \infty} \ga_i^{-1}\xi= \xi_0, \;\; \text{ and } \;\; \lim_{i\to \infty}
\beta_{
\xi
}^M(\ga_i,e)= e .\ee
Moreover, there exists a neighborhood $U$ of $\xi_0$ such that, as $i\to\infty$, the sequence $\ga_i \xi' $ converges to $\xi$ uniformly for all $\xi'\in U$.

\end{lemma}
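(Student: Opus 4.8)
\textbf{Proof strategy for Lemma \ref{lem.C2}.}

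The plan is to extract the sequence $\ga_i$ from the definition of the strong Myrberg property applied to a well-chosen point, and then to promote the resulting convergence statements to the uniform convergence on a neighborhood by a standard hyperbolicity/contraction argument. First I would pick an auxiliary point $g \in \tilde\Omega$ with $g^+ = \xi$ and $g^- = \eta$; this is possible by the antipodality \eqref{ant2} since $\xi \ne \eta$ in $\La$, and after translating by an element of $M$ I may normalize the fiber coordinate so that $\op{j}(g)=(\xi,\eta,m)$ for a suitable $m\in M$. Because $\xi \in \spa$, for each $\E_0\in\yg$ there are $\eta'\in\La$, $m'\in M$ with $Z_{\E_0} = \overline{\Gamma(\xi,\eta',0,m')\br_+}$, i.e. the forward $\br_+$-orbit of $\Psi(g_0)$ is dense in $Z_{\E_0}$ for an appropriate lift $g_0$ with $g_0^+ = \xi$. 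Unwinding the $\br_+$-action (which corresponds to the $A^+$-flow in the direction $u\in\inte\L_\Ga$ with $\psi(u)=1$, as in the proof of Corollary \ref{full3}) and using Lemma \ref{lem.flat} to identify $Z_{\E_0}$ with $\{(\xi',\eta',a m s_0): \xi'\in N^+e^+, am\in AM_\Ga\}$, the density gives a sequence $\ga_i\in\Ga$ and times $t_i\to\infty$ such that $\ga_i g_0 a_{t_i}$ approaches a prescribed target. Choosing that target so that its image under $\op{j}$ has forward endpoint near $\eta$-corresponding data and fiber coordinate near $e$, and using the cocycle identities for $\beta^{AM}$ together with the equivariance \eqref{eq.BC}, I would read off $\lim \ga_i^{-1} p = \eta$ in $G/K$ (the regularity coming from Lemma \ref{Anre}), $\lim \ga_i^{-1}\xi = \xi_0$ by prescribing $\xi_0$ as the backward endpoint of the target, and $\lim \beta_\xi^M(\ga_i,e) = e$ from the fiber coordinate converging to $e$.

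The second half of the statement — the uniform convergence $\ga_i\xi'\to\xi$ for $\xi'$ in a neighborhood $U$ of $\xi_0$ — is where the real work lies. The point is that once $\mu(\ga_i)\to\infty$ regularly (again Lemma \ref{Anre}) and $\kappa_2(\ga_i^{-1})$ stabilizes near the data determined by $\xi_0$, the map $\xi'\mapsto\ga_i\xi'$ contracts a fixed neighborhood of $\xi_0$ in $\F$ onto a shrinking neighborhood of $\xi$. Concretely, I would write $\ga_i = k_{1,i}(\exp\mu(\ga_i))k_{2,i}$ in the Cartan decomposition with $\kappa_1(\ga_i)\to\xi$ and $\kappa_2(\ga_i)\to[\xi_0^{\vee}]$ (the dual datum), note that in the coordinates $N^+e^+\subset\F$ the element $\exp\mu(\ga_i)$ acting by conjugation contracts toward $e^+$ at a rate controlled by $\min_{\alpha\in\Pi}\alpha(\mu(\ga_i))\to\infty$, and then transport this back via $k_{2,i}^{-1}\to$ a fixed element: any $\xi'$ in a small enough $k$-neighborhood of $\kappa_2(\ga_i)^{-1}$-preimage, hence (uniformly in $i$ large) any $\xi'$ in a fixed neighborhood $U$ of $\xi_0$, is pushed into an arbitrarily small neighborhood of $\xi$. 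This is the same mechanism used implicitly in \cite{LO} for ordinary Myrberg points; I would cite the relevant contraction estimate there if available rather than reprove it.

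I expect the main obstacle to be bookkeeping the fiber ($M$-)coordinate cleanly: the $\br$-action on $Z$ only tracks the $A$-direction $\ker\psi^\perp$ and the $M$-coordinate simultaneously, so to force $\beta_\xi^M(\ga_i,e)\to e$ I must ensure the target point in $Z_{\E_0}$ selected by the density statement has fiber coordinate exactly $m s_0$ (matching the normalization of $g$) rather than an arbitrary coset, which requires using that $Z_{\E_0}$ — by Lemma \ref{lem.flat} — contains the \emph{full} $AM_\Ga$-fiber over each $(\xi',\eta')$, so the fiber value $e$ (i.e. $m_0 = e \in M_\Ga$) is attainable in the closure. A secondary subtlety is that $\xi_0$ and $\eta$ in the statement are \emph{prescribed} arbitrary (distinct) limit points, not determined by $\xi$; this is fine because the strong Myrberg property gives density of the orbit in \emph{all} of $Z_{\E_0}$, so I can steer the pair $(\ga_i^{-1}p, \ga_i^{-1}\xi)$ toward any point of $\La^{(2)}$, in particular toward $(\eta,\xi_0)$, by choosing the target's backward endpoint to be $\xi_0$ and using properness of the $\Gamma$-action on $\La^{(2)}\times\br$ to convert orbit-density into the existence of the required $\ga_i$.
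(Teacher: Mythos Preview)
Your strategy is essentially the paper's: use the density of $\Gamma(\xi,\check\xi,0,m)\br_+$ in $Z_{\E_0}$ coming from the strong Myrberg property, steer toward the target $(\xi_0,\eta,0,m)\in Z_{\E_0}$ (membership by Lemma~\ref{lem.flat}), read off the three limits from the $\Gamma$-action formula on $\La^{(2)}\times\br\times M$, and then run the Cartan/regularity contraction argument for the uniform convergence. Two points of streamlining relative to your write-up: (i) the auxiliary $g$ with $g^+=\xi$, $g^-=\eta$ is unnecessary---the paper works directly in $Z$, using only the orbit point $(\xi,\check\xi,0,m)$ from the Myrberg data and the target $(\xi_0,\eta,0,m)$; (ii) the $M$-coordinate bookkeeping you flag as the ``main obstacle'' is handled by a one-line trick: simply choose the target to have the \emph{same} $m$ as the dense orbit, so that $\beta_\xi^M(\ga_i,e)m\to m$ gives $\beta_\xi^M(\ga_i,e)\to e$ immediately---no normalization of $g$ or appeal to $e\in M_\Ga$ is needed. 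Also note a swap in your description: in the target $(\xi_0,\eta,0,m)$ the first (forward) coordinate is $\xi_0$ and the second (backward) is $\eta$, not the other way around. For the uniform convergence, the paper writes $\ga_i=k_ia_i\ell_i^{-1}$, deduces $\ell_0^-=\eta$, observes $\ga_i\xi'\to k_0^+$ uniformly on compacta of $\{\xi':(\xi',\eta)\in\F^{(2)}\}$, and identifies $k_0^+=\xi$ by plugging in $\xi'=\ga_i^{-1}\xi\to\xi_0$; this is exactly your contraction argument.
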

\begin{proof} 
Let $\xi$ and $\eta$ be as in the statement.
Fix any $\E_0\in \yg$.
By the definition of $\La_{\psi}^{\spadesuit}$, there exist $\check\xi\in\La$ and $m\in M$ such that $\Ga(\xi,\check\xi,0,m)\bb R^+$ is dense in $Z_{\cal E_0}$.
Note that $(\xi_0,\eta,0,m)\in Z_{\cal E_0}$ by Lemma \ref{lem.flat}.  Therefore
there exist sequences $\ga_i\in\Ga$ and $t_i\to +\infty$ such that
\begin{align*}
&
\lim_{i\to \infty} \ga_i^{-1}.(\xi,\check\xi,0+t_i,m)
\\
&=\lim_{i\to \infty} (\ga_i^{-1}\xi,\ga_i^{-1}\check\xi,\psi(\log \beta_\xi^A(\ga_i,e))+t_i,\beta_\xi^M(\ga_i,e)m)=(\xi_0,\eta,0,m).
\end{align*}
The last two conditions in \eqref{pp}  immediately follow from this and the first condition follows from \cite[Lem. 8.9]{LO}.

By passing to a subsequence, we may write $\ga_i=k_ia_i\ell_i^{-1} $ where $k_i\to k_0,\ell_i\to\ell_0$ in $K$ and $a_i\in A^+$.
As $\Ga$ is Anosov, $a_i\to\infty$ regularly in $A^+$.
We then have $\ell_0^-=\eta$. Note that $\ga_i\xi'\to k_0^+$ for all $\xi' \in \F $ with $(\xi', \eta) \in \F^{(2)}$ and this convergence is uniform on a compact subset
of $\{\xi': (\xi', \eta)\in \F^{(2)}\}$.
Since $(\xi_0, \eta)\in \F^{(2)}$, there exists a neighborhood $U$ of $\xi_0$ such that $\ga_i\xi' \to k_0^+$ uniformly for all $\xi'\in U$.
Since $\ga_i^{-1}\xi\to \xi_0$ and hence $\ga_i^{-1}\xi\in U$ for all large $i$, we have $\ga_i(\ga_i^{-1}\xi)\to k_0^+$. Hence $\xi=k_0^+$. The claim follows.
\end{proof}

\section{Equi-continuous family of Busemann functions}
We fix a left $G$-invariant and right $K$-invariant Riemannian metric $d$ on $G$. 
 For a subgroup $H<G$ and $\e>0$, we set $H_\e=\{h\in H: d(e,h)<\e\}$. We will use the notation $H_{O(\e)}$ to mean $H_{c\e}$ for some absolute constant $c>0$.
  Recall the notation $o=[K]\in G/K$.

In this section, we prove the following proposition.
\begin{prop}[Equi-continuity] \label{lem.diam} Let $\Gamma <G$ be an Anosov subgroup.
Fix $g\in N^+P$ be such that $g^\pm\in\La$. 
Let $\ga_n\in \Ga$ be a sequence such that
for some $\xi\in\La-\{g^-\}$, $\ga_n^{-1}\xi\to g^+$ and $\ga_n^{-1}g(o)\to g^-$ as $n\to\infty$.
Then, up to
passing to a subsequence of $\ga_n$,
the sequence of maps $\eta\mapsto \beta_\eta^{AM}(\ga_n^{-1} g, g)$
is equi-continuous at $g^+$, i.e.,
 for any $\e>0$, there exists a neighborhood $U_\e$ of $g^+$ in $ \F$ such that for all $n\ge 1$ and for all  $\eta \in U_\e$, 
 $$ \beta_{\eta}^{AM}(\ga_n^{-1}g,g)\subset \beta_{g^+}^{AM}(\ga_n^{-1}g,g)(AM)_{\e}   .$$ 
\end{prop}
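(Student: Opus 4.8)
The plan is to reduce everything to concrete estimates in the horospherical coordinates near $g^+$, exploiting the contracting dynamics of the sequence $\ga_n^{-1}$. First I would normalize: replacing $\Ga$ by $g^{-1}\Ga g$ and using the equivariance of $\beta^{AM}$ (property (2) after Remark \ref{rmk.WD}), we may assume $g = e$, so that $e^\pm \in \La$, $\ga_n^{-1}\xi \to e^+$, and $\ga_n^{-1}(o) \to e^-$. By Lemma \ref{ant} the points $\eta$ in a small neighborhood $U$ of $e^+$ lie in $N^+e^+$, so we may write $\eta = h_\eta e^+$ with $h_\eta \in N^+$ depending continuously on $\eta$ and $h_{e^+} = e$. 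Write $\ga_n^{-1} = k_n a_n \ell_n^{-1}$ in the Cartan decomposition $KA^+K$; since $\Ga$ is Anosov, Lemma \ref{Anre} gives $a_n \to \infty$ regularly in $A^+$, and $\ga_n^{-1}(o) \to e^-$ forces $\kappa_1(\ga_n^{-1}) = [k_n] \to e^-$, while $\ga_n^{-1}\xi \to e^+$ pins down $\kappa_2$: after passing to a subsequence, $k_n \to k_0$ and $\ell_n \to \ell_0$ in $K$ with $k_0^+ = e^-$ and $\ell_0^+ = \xi$ (roughly — the precise bookkeeping is the content of Lemma \ref{lem.C2}'s second half).

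Next I would use the cocycle identity to split
$$\beta_\eta^{AM}(\ga_n^{-1}, e) = \sigma^{AM}(\ga_n, \eta)^{-1},$$
and analyze $\sigma^{AM}(\ga_n,\eta)$ using the definition via $\ga_n k_\eta \in k_{\ga_n\eta}\,\sigma^{AM}(\ga_n,\eta)\,N$. The key point: since $\ga_n^{-1}$ is strongly contracting toward $e^-$ away from the repelling direction, the map $\eta \mapsto \ga_n^{-1}\eta$ contracts a fixed neighborhood $U_\e$ of $e^+$ (which is uniformly bounded away from the repelling fixed point $k_n^- \to$ repelling direction) toward a point near $e^+$, with contraction rate governed by $\min_{\alpha\in\Pi}\alpha(\log a_n) \to \infty$. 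I would make this quantitative: write $\eta' := \ga_n^{-1}\eta$ and $(e^+)' := \ga_n^{-1}e^+$; both lie in a tiny ball, and $h_{\eta'}h_{(e^+)'}^{-1} \in (N^+)_{O(\e)}$ uniformly in $n$. Then, using the cocycle relation $gh_\xi \in h_{g\xi}\, b(g,\xi)\, N$ together with $\beta^{AM}_\eta(\ga_n^{-1},e) = \beta^{AM}_{e^+}(\ga_n^{-1},e)\cdot \beta^{AM}_{(e^+)'}(\ga_n^{-1}e^{-1}\cdots)$-type manipulations, the discrepancy $\beta_\eta^{AM}(\ga_n^{-1},e)\,\beta_{e^+}^{AM}(\ga_n^{-1},e)^{-1}$ is conjugate to a product of two $N^+$-elements of size $O(\e)$ by the bounded elements $k_n, \ell_n$, hence lies in $(AM)_{O(\e)}$. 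Rescaling $\e$ absorbs the absolute constant.

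The main obstacle I anticipate is the \emph{uniformity in $n$}: one must ensure the neighborhood $U_\e$ of $g^+$ can be chosen independently of $n$. This requires that the "thin" direction we are contracting along stays uniformly transverse to the repelling hyperplane of $\ga_n^{-1}$ — i.e., that $g^+$ (hence all of $U_\e$) stays uniformly bounded away from $\kappa_2(\ga_n)^{\mathrm{flip}}$. This is exactly where Anosov regularity (Lemma \ref{Anre}, giving $a_n\to\infty$ regularly so \emph{all} simple-root directions expand) combines with the hypothesis $\ga_n^{-1}\xi \to g^+$ for a \emph{fixed} $\xi \ne g^-$: it guarantees that $\xi$, and by openness a whole neighborhood, is eventually in the basin of attraction of $\ga_n^{-1}$ with uniform quantitative control. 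I would isolate this as a lemma on uniform North-South dynamics for regular Cartan projections, likely citing the relevant estimate from \cite{LO} (in the spirit of Lemma \ref{lem.C2}), and then the equi-continuity of $\eta\mapsto\beta^{AM}_\eta(\ga_n^{-1}g,g)$ follows by feeding the uniform contraction into the continuity of the cocycle $\xi \mapsto b(\cdot,\xi)$ on $N^+e^+$ noted after Definition \ref{co}.
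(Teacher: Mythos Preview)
Your high-level plan (normalize to $g=e$, Cartan-decompose, exploit Anosov regularity, then feed contraction into the cocycle) matches the paper's, but the execution has two genuine problems.

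\textbf{Wrong direction of contraction.} You Cartan-decompose $\ga_n^{-1}=k_na_n\ell_n^{-1}$ and assert that $\ga_n^{-1}$ contracts a neighborhood of $e^+$ ``toward a point near $e^+$''. This is false: with your conventions, $\ga_n^{-1}$ sends every $\eta$ in general position with $\ell_0^-$ toward $k_0^+$, and you yourself computed $k_0^+=e^-$. So $\ga_n^{-1}\eta\to e^-$ for $\eta$ near $e^+$, and in particular $h_{\ga_n^{-1}\eta}$ is eventually undefined since $e^-\notin N^+e^+$; your whole computation with $h_{\eta'}h_{(e^+)'}^{-1}$ collapses. (Relatedly, $\ga_n^{-1}\xi\to e^+$ forces $\xi=\ell_0^-$, not $\ell_0^+=\xi$ as you wrote.) The quantity you need is $\beta_\eta^{AM}(\ga_n^{-1},e)=\sigma^{AM}(\ga_n,\eta)$, so the relevant map is $\eta\mapsto\ga_n\eta$, which contracts a neighborhood of $e^+$ toward $\xi\in N^+e^+$. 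The paper accordingly decomposes $g_n:=g^{-1}\ga_n g$ (not its inverse), obtaining $\ell_0\in M$ and $k_0^+=g^{-1}\xi$.

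\textbf{Missing structural input.} Even after fixing the direction, knowing that $\ga_n\eta$ and $\ga_ne^+$ both lie in a small ball near $\xi$ is not enough to conclude $\sigma^{AM}(\ga_n,\eta)\sigma^{AM}(\ga_n,e^+)^{-1}\in(AM)_{O(\e)}$: these cocycle values are each of size roughly $a_n\to\infty$, and the uncontrolled $N$-tail in the Bruhat form of $\ga_n$ prevents the na\"ive ``bounded-conjugate of small $N^+$-elements'' argument you sketch (an $AM$-element cannot literally be conjugate to an $N^+$-element anyway). The paper closes this gap with a separate structural lemma (Lemma~\ref{lem.WF}): for all $a\in A^+$, $aG_\e\subset K_{c\e}aA_{c\e}N$. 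This says that a small perturbation, after left multiplication by a large $a\in A^+$, still has Iwasawa $K$- and $A$-parts within $O(\e)$ of $e$ and $a$; it is exactly what absorbs the error. From it one deduces (Lemma~\ref{lem.M}) that $\sigma^{AM}(g_n,\eta)\in a_n m_0(AM)_\e$ uniformly for $\eta$ in a fixed neighborhood of $\ell_0^+=e^+$, and equi-continuity follows since both $\sigma^{AM}(g_n,\eta)$ and $\sigma^{AM}(g_n,e^+)$ lie in the same $a_nm_0(AM)_\e$-ball. You should isolate and prove this $aG_\e\subset K_{c\e}aA_{c\e}N$ estimate; it is the real content you are missing.
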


We first prove the following two lemmas using the structure theory of semisimple Lie groups.
\begin{lemma}\label{lem.WF}
There exists $c>0$ such that for all sufficiently small $\e>0$, 
 $$aG_\e \subset K_{c\e}aA_{c\e}N\quad\text{ for all $a\in A^+$.}$$
\end{lemma}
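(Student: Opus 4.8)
The statement is a uniform (over $a \in A^+$) version of the fact that, near the identity, a small perturbation $g_\varepsilon \in G_\varepsilon$ conjugated by $a$ still lands in a small $KAN$-neighborhood, \emph{provided} one measures the $A$- and $N$-displacements in the right way. The natural route is through the Lie algebra and the Iwasawa decomposition $\fg = \fk \oplus \fa \oplus \fn$. Writing $a = \exp(H)$ with $H \in \fa^+$, conjugation $\op{Ad}_a$ acts on $\fn = \bigoplus_{\alpha > 0}\fg_\alpha$ by $e^{\alpha(H)} \ge 1$ on each positive root space, on $\fa$ trivially, and on $\fk$ by a bounded-in-norm but not contracting map. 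So $\op{Ad}_a$ is an \emph{expansion} on the $\fn$-part and harmless on $\fa$; it is the $\fk$-part where the issue lies, since $\op{Ad}_a$ on $\fk$ is the sum of the $\op{Ad}_a$-action on $\fm$ (trivial) plus its action on $\bigoplus_\alpha(\fg_\alpha \oplus \fg_{-\alpha}) \cap \fk$, which mixes $\fn$ and $\fn^+$ and is norm-bounded by $\max_\alpha e^{|\alpha(H)|}$ — a priori large.

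First I would fix $X \in \fg_\varepsilon$ (i.e.\ $\|X\| < \varepsilon$, so $g_\varepsilon = \exp X$) and decompose $X = X_\fk + X_\fa + X_\fn$ according to $\fg = \fk \oplus \fa \oplus \fn$; each piece has norm $O(\varepsilon)$. The element $a g_\varepsilon = a\exp(X) = \exp(\op{Ad}_a X)\, a$, and I want $\op{Ad}_a X \in \fk_{O(\varepsilon)} + \fa_{O(\varepsilon)} + \fn$ (no smallness needed on the $\fn$-part, since the conclusion allows $N$, not $N_{c\varepsilon}$). Now $\op{Ad}_a X_\fn \in \fn$ automatically, and $\op{Ad}_a X_\fa = X_\fa$ stays $O(\varepsilon)$. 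The term $\op{Ad}_a X_\fk$ is the one to control: write $X_\fk = X_\fm + \sum_\alpha Y_\alpha$ with $Y_\alpha \in (\fg_\alpha \oplus \fg_{-\alpha})\cap\fk$, and expand $Y_\alpha = Z_\alpha + \theta Z_\alpha$ with $Z_\alpha \in \fg_\alpha$. Then $\op{Ad}_a Y_\alpha = e^{\alpha(H)} Z_\alpha + e^{-\alpha(H)}\theta Z_\alpha$. The first summand lies in $\fn$ (no smallness needed), and the second has norm $\le e^{-\alpha(H)}\|Z_\alpha\| \le \|Z_\alpha\| = O(\varepsilon)$ since $\alpha(H) \ge 0$ on $\fa^+$. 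And $\op{Ad}_a X_\fm = X_\fm$ stays $O(\varepsilon)$ since $\fm$ commutes with $\fa$. So $\op{Ad}_a X \in \fk_{O(\varepsilon)} \oplus \fa_{O(\varepsilon)} \oplus \fn$ where the $\fk_{O(\varepsilon)}$-part collects $X_\fm + \sum_\alpha e^{-\alpha(H)}\theta Z_\alpha$ (but note this vector lies in $\fk$ only after we also throw the $e^{\alpha(H)}Z_\alpha$ terms into $\fn$ — so strictly I should say $\op{Ad}_a X = W_\fk + W_\fa + W_\fn$ with $W_\fk \in \fk$, $\|W_\fk\|, \|W_\fa\| = O(\varepsilon)$, $W_\fn \in \fn$, obtained by re-sorting; this is the one slightly delicate bookkeeping point).

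From $\op{Ad}_a X = W_\fk + W_\fa + W_\fn$ with the $\fk$ and $\fa$ parts $O(\varepsilon)$, I would pass back to the group via the local diffeomorphism $\fk \times \fa \times \fn \to G$, $(\cdot,\cdot,\cdot)\mapsto \exp(\cdot)\exp(\cdot)\exp(\cdot)$ (a chart near $e$ adapted to $G = KAN$), using that the BCH corrections between $\exp(W_\fk + W_\fa + W_\fn)$ and $\exp(W_\fk)\exp(W_\fa)\exp(W_\fn)$ are controlled — but one must be careful that $\|W_\fn\|$ is \emph{not} small, so the honest statement is: for $W$ with $\|W_\fk\|,\|W_\fa\|$ small (uniformly) one gets $\exp W \in K_{O(\varepsilon)} A_{O(\varepsilon)} N$. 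This last uniform statement is most cleanly proved not by BCH but by the Iwasawa projection: the map $G \to K$, $g \mapsto \kappa(g)$ with $g \in \kappa(g)AN$, and $G \to A$ are real-analytic, and on the set $\{g : g \in K_1 A_1 N\}$ for fixed compact $K_1, A_1$ they are Lipschitz; since $\exp(\op{Ad}_a X)$ ranges over such a set as $a$ varies (the $K$- and $A$-coordinates stay in a fixed compact set, only the $N$-coordinate runs free), one gets the uniform Lipschitz bound. So the final step is: $a g_\varepsilon = \exp(\op{Ad}_a X)\,a \in K_{O(\varepsilon)} A_{O(\varepsilon)} N \cdot a = K_{O(\varepsilon)} A_{O(\varepsilon)} (aN a^{-1}) a = K_{O(\varepsilon)} A_{O(\varepsilon)} a N$, and since $A$ is abelian this is $K_{O(\varepsilon)} a A_{O(\varepsilon)} N$, as claimed with $c$ the implied constant. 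I expect the main obstacle to be the \textbf{uniformity in $a$}: one must verify that the only coordinate of $\exp(\op{Ad}_a X)$ in the $KAN$-chart that blows up with $a$ is the $N$-coordinate (harmless), and package this via a Lipschitz estimate for the Iwasawa $K$- and $A$-projections that is independent of how far into $N$ one has moved — this requires knowing those projections are invariant under the right $N$-action on the relevant factor, which is exactly the content of the Iwasawa decomposition but should be stated carefully.
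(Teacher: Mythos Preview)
Your Lie-algebra computation is fine: decomposing $X=X_\fk+X_\fa+X_\fn$ and observing that, after re-sorting, $\op{Ad}_aX = W_\fk + W_\fa + W_\fn$ with $\|W_\fk\|,\|W_\fa\|=O(\e)$ uniformly in $a\in A^+$ is correct. The gap is in the passage back to the group. Your ``honest statement'' --- that $\|W_\fk\|,\|W_\fa\|=O(\e)$ (with $W_\fn\in\fn$ arbitrary) forces $\exp(W_\fk+W_\fa+W_\fn)\in K_{O(\e)}A_{O(\e)}N$ --- is false. In $\op{SL}_2(\br)$ take $W=\e Y + tZ$ with $Y=\left(\begin{smallmatrix}0&-1\\1&0\end{smallmatrix}\right)\in\fk$ and $Z=\left(\begin{smallmatrix}0&1\\0&0\end{smallmatrix}\right)\in\fn$; a direct computation gives that the Iwasawa $A$-coordinate of $\exp W$ has norm $\sim\cosh\sqrt{\e t}\to\infty$ as $t\to\infty$, for any fixed $\e>0$. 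Your justification via Lipschitz bounds is circular: the assertion that ``$\exp(\op{Ad}_aX)$ ranges over a set where the $K$- and $A$-coordinates stay in a fixed compact set'' is precisely the conclusion you are after, not an input. What is special about the actual $\op{Ad}_aX$ (and absent from the counterexample) is the extra correlation $\|W_\fk\|\cdot\|W_\fn\|=O(\e^2)$, but you neither isolate nor use this, and exploiting it would be considerably more work than the lemma warrants.

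The paper sidesteps the Lie-algebra-to-group transition entirely by working at the group level with the \emph{Bruhat} local chart rather than the Iwasawa one: from $G_\e\subset M_{O(\e)}N^+_{O(\e)}A_{O(\e)}N_{O(\e)}$ one uses the group-level contraction $aN^+_\e a^{-1}\subset N^+_\e$ for $a\in A^+$ (this is the key uniformity), then converts the surviving small $N^+_{O(\e)}$ factor back to $KAN$ via $N^+_\e\subset K_{O(\e)}A_{O(\e)}N_{O(\e)}$, which is a single local-diffeomorphism estimate at $e$ with no large parameters present. Two lines of inclusions finish it. The moral: decompose $G_\e$ along $N^+\times MA\times N$ rather than $K\times A\times N$, so that conjugation by $a$ acts by a genuine contraction on the only factor it does not normalize.
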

\begin{proof}
For all sufficiently small $\e>0$, we have
$$
G_\e\subset M_{O( \e)}N_{O(\e)}^+A_{O(\e)}N_{O(\e)} \text{ and }N_\e^+\subset K_{O(\e)}A_{O(\e)}N_{O(\e)}.
$$
Since $aN_\e^+a^{-1}\subset N_{\e}^+$ for any $a\in A^+$, it follows that
\begin{align*}
aG_\e&\subset  aM_{O(\e)}N_{O(\e)}^+A_{O(\e)}N_{O(\e)}=  M_{O(\e)}(aN_{O(\e)}^+a^{-1})aA_{O(\e)}N_{O(\e)}\\
&\subset M_{O(\e)}(K_{O(\e)}A_{O(\e)}N_{O(\e)})aA_{O(\e)}N_{O(\e)}\subset K_{O(\e)}aA_{O(\e)}N,
\end{align*}
which was to be proved.
\end{proof}

\begin{lemma}\label{lem.M}
Let $g_n=k_na_n\ell_n^{-1}\in KA^+K$ where $a_n\to\infty$ regularly in $A^+$ and $k_n\to k_0$, $\ell_n\to\ell_0$ in $K$ as $n\to\infty$. Assume that both $\xi:=k_0^+$ and $ \zeta:=\ell_0^+$ belong to $N^+e^+$, and set $m_0=m_0[k_0, \ell_0]$ to be
 $$m_0:=k_{\xi}^{-1}k_0 \ell_0^{-1} k_{\zeta}\in M$$
 where $k_\xi, k_\zeta\in K$ are defined as in \eqref{kxi}.
 Then for all small $\e>0$, there exist neighborhoods $V_\e'$ and $U_\e'$ of $\xi$ and $\zeta$, respectively, such that 
$$\{\beta_\eta^{AM}(g_n^{-1},e): \eta \in U_\e'\cap g_n^{-1}V_\e'\}  \subset a_nm_0 (AM)_\e$$
 for all sufficiently large $n>1$.
\end{lemma}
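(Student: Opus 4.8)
The plan is to reduce everything to an estimate for the $AM$-valued Iwasawa cocycle. By Definition~\ref{def.Bu} and $\sigma^{AM}(e,\cdot)=e$ we have $\beta^{AM}_\eta(g_n^{-1},e)=\sigma^{AM}(g_n,\eta)$ whenever $\eta,g_n\eta\in N^+e^+$, so I must exhibit neighborhoods $V_\e'\ni\xi$ and $U_\e'\ni\zeta$ inside $N^+e^+$ with $\sigma^{AM}(g_n,\eta)\in a_nm_0(AM)_\e$ for all large $n$ and all $\eta\in U_\e'\cap g_n^{-1}V_\e'$. First I record a few facts. For $\eta\in N^+e^+$ one has $\eta=k_\eta e^+$, since $h_\eta=k_\eta a_\eta n_\eta$ with $a_\eta n_\eta\in P$; likewise $g_n\eta=k_{g_n\eta}e^+$. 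Since $\zeta=\ell_0^+$ and also $\zeta=k_\zeta e^+$ by the previous remark, $\ell_0P=k_\zeta P$, so $m_\zeta:=k_\zeta^{-1}\ell_0\in K\cap P=M$; similarly $\xi=k_0^+=k_\xi e^+$ gives $k_\xi^{-1}k_0\in M$, whence $m_0=(k_\xi^{-1}k_0)m_\zeta^{-1}\in M$ and $k_0m_\zeta^{-1}=k_\xi m_0$. Finally, since $d$ is left $G$-invariant, right $K$-invariant and $M\subset K$, conjugation by any $m\in M$ is a $d$-isometry of $G$, hence preserves each of $K_r$, $A_r$, $M_r$, $(AM)_r$; moreover $M$ commutes with $A$ and normalizes $N$.

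Now I choose the data. Using continuity of $\eta'\mapsto k_{\eta'}$ on $N^+e^+$, I pick $U_\e'$ and $V_\e'$ so small that $k_{\eta'}\in k_\zeta K_\e$ on $U_\e'$ and $k_{\eta'}\in k_\xi K_\e$ on $V_\e'$, and I require $n$ large enough that $\ell_n^{-1}k_\zeta\in m_\zeta^{-1}K_\e$ and $k_nm_\zeta^{-1}\in k_\xi m_0K_\e$ (possible since $k_n\to k_0$, $\ell_n\to\ell_0$ and $k_0m_\zeta^{-1}=k_\xi m_0$). Then for $\eta\in U_\e'\cap g_n^{-1}V_\e'$ one has $\ell_n^{-1}k_\eta\in(\ell_n^{-1}k_\zeta)K_\e\subseteq m_\zeta^{-1}K_{O(\e)}$, so
\begin{align*}
g_nk_\eta=k_na_n\ell_n^{-1}k_\eta&\in k_na_nm_\zeta^{-1}K_{O(\e)}=k_nm_\zeta^{-1}\,a_nK_{O(\e)}\\
&\subseteq k_nm_\zeta^{-1}K_{O(\e)}a_nA_{O(\e)}N\subseteq k_\xi m_0K_{O(\e)}a_nA_{O(\e)}N,
\end{align*}
where the equality uses that $a_n$ commutes with $m_\zeta\in M$, the first inclusion is Lemma~\ref{lem.WF} applied with $a=a_n\in A^+$, and the last uses $k_nm_\zeta^{-1}\in k_\xi m_0K_\e$. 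As $m_0\in M$ commutes with $A$, normalizes $N$, and satisfies $m_0K_{O(\e)}=K_{O(\e)}m_0$, the last set equals $k_\xi K_{O(\e)}a_nA_{O(\e)}N\,m_0$; thus $g_nk_\eta=k_\xi\kappa\,a_na'\nu\,m_0$ for some $\kappa\in K_{O(\e)}$, $a'\in A_{O(\e)}$, $\nu\in N$.

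To extract the cocycle I apply both sides to $e^+$: since $a_n,a',\nu,m_0\in P$ and $\eta=k_\eta e^+$, we get $g_n\eta=k_\xi\kappa e^+$, and comparing with $g_n\eta=k_{g_n\eta}e^+$ forces $k_{g_n\eta}^{-1}k_\xi\kappa\in K\cap P=M$, so write $k_\xi\kappa=k_{g_n\eta}m'$ with $m'\in M$. Then $g_nk_\eta=k_{g_n\eta}(m'a_na'\nu m_0)$, and commuting $M$ past $A$ and pushing the $N$-factor to the right gives $m'a_na'\nu m_0=(a_na'm'm_0)\nu'$ with $\nu'\in N$; by uniqueness of the $(AM)N$-decomposition (as $AMN$ is a group with $AM\cap N=\{e\}$) this yields $\sigma^{AM}(g_n,\eta)=a_na'm'm_0$. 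Moreover $g_n\eta\in V_\e'$ gives $k_{g_n\eta}\in k_\xi K_\e$, whence $m'=k_{g_n\eta}^{-1}k_\xi\kappa\in K_{O(\e)}\cap M=M_{O(\e)}$. Therefore
\[
\sigma^{AM}(g_n,\eta)=a_nm_0\,(a'\cdot m_0^{-1}m'm_0)\in a_nm_0\,A_{O(\e)}M_{O(\e)}\subseteq a_nm_0\,(AM)_{O(\e)},
\]
using once more that $a'$ commutes with $m_0$ and that conjugation by $m_0\in M$ is a $d$-isometry. Running the argument with $\e$ replaced by $\e/c$ for the absolute constant $c$ hidden in $O(\e)$ yields the claimed inclusion into $a_nm_0(AM)_\e$.

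The only genuinely delicate point is the presence of the factor $a_n\to\infty$: transporting an $O(\e)$-ball to the wrong side of $a_n$ would destroy all smallness, so the argument must be arranged so that the sole such move is the one licensed by Lemma~\ref{lem.WF}, namely $a_nK_{O(\e)}\subseteq K_{O(\e)}a_nA_{O(\e)}N$ uniformly in $a_n\in A^+$. Everything else is the bookkeeping of commuting $M$ past $A$, pushing the $N$-factors to the far right, and using the $M$-invariance of $d$ to keep track of the radii of the $K$-, $A$- and $M$-balls.
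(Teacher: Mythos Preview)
Your proof is correct and follows essentially the same approach as the paper's: both reduce to estimating $\sigma^{AM}(g_n,\eta)$, choose $U_\e',V_\e'$ via continuity of $\eta'\mapsto k_{\eta'}$, and use Lemma~\ref{lem.WF} as the single step that moves the $K_{O(\e)}$-ball across $a_n$. The only cosmetic difference is organizational---the paper computes $k_0^{-1}g_nk_\eta$ in two ways and then compares using Iwasawa uniqueness, whereas you compute $g_nk_\eta$ directly and recover $k_{g_n\eta}$ by acting on $e^+$; the resulting $M$-bookkeeping (your $m_\zeta=k_\zeta^{-1}\ell_0$ versus the paper's $\ell_0^{-1}k_\zeta$) is equivalent.
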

\begin{proof}
By the continuity of the visual maps, there exist neighborhoods $V_\e'$ of $\xi$ and $U_\e'$ of $\zeta$ such that 
$k_\eta\in k_{\zeta} K_{\e}$ for all $\eta\in U_\e'$ and $k_{\eta}\in k_{\xi} K_{\e}$ for all $\eta\in V_\e'$.
We may assume without loss of generality that $k_0^{-1}k_n,\, \ell_n^{-1}\ell_0\in K_{\e}$ for all $n\ge 1$.
Let $\eta\in U_\e'\cap g_n^{-1}V_\e'$ be arbitrary.
By definition,
$$
g_n k_{\eta}\in k_{g_n\eta}
\sigma^{AM}
(g_n,\eta)N, \text{ i.e., } k_0^{-1} g_n k_{\eta}\in k_0^{-1} k_{g_n\eta}
\sigma^{AM}
(g_n,\eta)N.
$$

Observe that
\begin{align*}
k_0^{-1}g_n k_{\eta}&\in k_0^{-1}g_n k_{\zeta}K_{\e}=(k_0^{-1}k_n)a_n(\ell_n^{-1} \ell_0)\ell_0^{-1} k_{\zeta}K_{\e}\\
&\subset K_{\e}a_n K_{\e}\ell_0^{-1} k_{\zeta}K_{\e}\subset K_{\e}a_n K_{O(\e)}\ell_0^{-1} k_{\zeta}.
\end{align*}
On the other hand, since $g_n\eta\in V_\e'$, 
\begin{align*}
&k_0^{-1}g_n k_{\eta}\in
k_0^{-1} k_{g_n\eta}
\sigma^{AM}
(g_n,\eta)N \\& \subset k_0^{-1} k_{\xi}K_{\e}
\sigma^{AM}
(g_n,\eta) N\subset K_{O(\e)}k_0^{-1}k_{\xi}
\sigma^{AM}
(g_n,\eta) N.
\end{align*}
Combining these with the fact that $\ell_0^{-1} k_{\zeta}\in M$, we get
$$
a_n K_{O(\e)}  \cap K_{O(\e)}k_0^{-1} k_{\xi} 
\sigma^{AM}
(g_n,\eta)(\ell_0^{-1} k_{\zeta})^{-1} N\neq \emptyset.
$$
Since $k_0^{-1} k_{\xi}\in M$ as well, it follows from Lemma \ref{lem.WF} that 
\begin{align*}
\sigma^{A}(g_n,\eta)&\in a_n A_{O(\e)},\text{ and }\\
\sigma^{M}(g_n,\eta)&\in (k_0^{-1} k_{\xi})^{-1}M_{O(\e)}\ell_0^{-1} k_{\zeta}\subset (k_0^{-1}k_{\xi})^{-1}\ell_0^{-1}k_{\zeta}M_{O(\e)}.
\end{align*}
Since $\beta_\eta^{AM}(g_n^{-1},e)=\sigma^{AM }(g_n, \eta)$,
and $m_0:=(k_0^{-1}k_{\xi})^{-1}\ell_0^{-1} k_{\zeta}$, this implies the claim.
\end{proof}

\noindent{\bf Proof of Proposition \ref{lem.diam}:}
Set $g_n:=g^{-1}\ga_ng$.
Then $g_n^{-1}(g^{-1}\xi)\to e^+$ and $g_n^{-1}(o)\to e^-$ as $n\to\infty$. By passing to a subsequence, we may write $g_n=k_na_n\ell_n^{-1}\in KA^+K$
where the sequences $k_n$ and $\ell_n$
converge to some $ k_0$ and $\ell_0$ in $K$ respectively. Since $\Ga$ is Anosov, it follows that
$a_n\to\infty$ regularly in $A^+$.
Combined with the hypothesis $g_n^{-1}(o)\to e^-$ as $n\to\infty$, we have $\ell_0^-=e^-$, or equivalently, $\ell_0\in M$. Hence $\ell_0^+=e^+$.

We claim that $k_0^+=g^{-1}\xi$.
Since $a_n\to\infty$ regularly in $A^+$, for any $\eta\in N^+e^+$, $g_n\eta\to k_0^+$ as $n\to\infty$ and the convergence is uniform on a compact subset of $N^+e^+$.
Since $g_n^{-1}(g^{-1}\xi)\to e^+$ as $n\to\infty$, $g_n^{-1}(g^{-1}\xi)$ is contained in a compact subset of $N^+e^+$ for all large $n$, it follows that
$g_n(g_n^{-1}(g^{-1}\xi))\to k_0^+$ as $n\to\infty$, which proves the claim.

Now let $\e>0$ be arbitrary.
Since $g^-\in\La$, by Lemma \ref{ant}, $g^{-1}\La-\{e^-\}\subset N^+e^+.$
Hence both $e^+$ and $ g^{-1}\xi$ belong to $ N^+e^+$.
Applying Lemma \ref{lem.M} to the sequence $g_n$, we obtain $m_0=m_0[k_0, \ell_0]\in M$, and some bounded neighborhoods $U_\e', V_\e' \subset N^+e^+$ of $e^+$ and $g^{-1}\xi$ respectively, such that $$\beta_{\eta'}^{AM} (g_n^{-1},e)\in a_nm_0(AM)_{\e/2} \quad\text{for all $\eta'\in U_\e'\cap g_n^{-1}V_\e'$.}$$
Since $k_0^+=g^{-1}\xi\in V_\e'$ and $U_\e'\subset N^+e^+$, and hence $U_\e'\times \{\ell_0^-\}\subset \F^{(2)}$, we have $g_nU_\e'\subset V_\e'$, and hence
$ U_\e'= U_\e'\cap g_n^{-1}V_\e'$  for all large $n\gg 1$.
Set $U_\e:=gU_\e'\cap N^+e^+$.
Note that $g^+\in U_\e$.

Let $\eta\in U_\e$.
Then $g^{-1}\eta\in U_\e'= U_\e'\cap g_n^{-1}V_\e'$ and hence
\begin{equation}\label{eq.EC}
\beta_{g^{-1}\eta}^{AM}(g_n^{-1},e)\in a_nm_0(AM)_{\e/2}.
\end{equation}

Since $g^{-1}\ga_n\eta=g_n(g^{-1}\eta)\in k_na_n\ell_n^{-1}U_{\e}'$,
we have $g^{-1}\ga_n\eta\to k_0^+\in N^+e^+$, and hence
$g^{-1}\ga_n\eta \in N^+e^+$ for all large $n\gg 1$.
Therefore for all sufficiently large $n>1$,
$\beta_{\eta}^{AM}(\ga_n^{-1}g,g)$ is well-defined and
$$\beta_{\eta}^{AM}(\ga_n^{-1}g,g)= \beta_{g^{-1}\eta}^{AM}(g^{-1} \ga_n^{-1} g,e)=\beta_{g^{-1}\eta}^{AM}(g_n^{-1},e).$$
Hence the lemma follows from the inclusion \eqref{eq.EC}.

\section{Essential values and ergodicity}
As before, we let $\G<G$ be an Anosov subgroup such that $\Ga\cap \inte A^+ M\ne \{e\}$.
Fixing $\psi\in \dg$, let $\nu=\nu_\psi$ be the unique $(\Gamma,\psi)$-Patterson Sullivan measure on $\La$. By Corollary \ref{full1},
\be\label{nf} \nu (N^+e^+\cap \La)=1.\ee

Fix a Borel isomorphism $G/N\to \F\times AM$ given by
\begin{equation}\label{eq.hopf3}
gN\mapsto (g^+,\beta^{AM}_{g^+} (e,g)) \quad
\text{for $g\in N^+AM$}.\end{equation}
This isomorphism is $G$-equivariant for a Borel $G$-action on $\F\times AM$ given by
$$
g(\xi,am)=(g\xi,\beta^{AM}_{\xi}(g^{-1},e)am)
$$
 for $am\in AM$, $g\in G$, and $\xi\in N^+e^+ $ with $ g\xi\in N^+e^+$.

The following then defines a $\Ga$-invariant locally finite measure on $G/N$ by
\begin{equation}\label{eq.nutil2}
d\hat\nu([g])=d\nu(g^+)e^{\psi(\log a)}\,da\,dm
\end{equation}
 where $da$ and $ dm$ are Haar measures on $A$ and $M$ respectively.

Motivated by the work of Schmidt \cite{Sch2} (also \cite{Ro}), we define:
\begin{definition}\label{def.ess}
An element $am\in AM$ is called a $\nu$-\textit{essential value}, if for any Borel set $B\subset \cal F$ with $\nu(B)>0$ and any $\e>0$, there exists $\ga\in\Ga$ such that 
\be\label{ess}
\nu \{\xi\in B\cap\ga^{-1} B : \beta^{AM}_{\xi}(\ga^{-1}, e)\in am(AM)_\e\} >0.
\ee
\end{definition}
In view of $\eqref{nf}$, it suffices to consider Borel subsets $B\subset N^+e^+$ in this definition, and hence
$\beta^{AM}_{\xi}(\ga^{-1}, e)$ is well-defined for all $\xi\in B\cap \ga^{-1}B$.

Let ${\mathsf E}_{\nu}$ denote the set of all $\nu$-essential values in $AM$.
By the following lemma,  $am\in \mathsf E_\nu$ if and only if $(am)^{-1}\in \mathsf E_\nu$;
hence the condition $\beta^{AM}_{\xi}(\ga^{-1}, e)\in am (AM)_\e$ in \eqref{ess} can be replaced by
$\beta^{AM}_{\xi}(e, \ga^{-1})\in am (AM)_\e$ in the above definition.

\begin{lem}
${\mathsf E}_\nu$ is a closed subgroup of $AM$.
\end{lem}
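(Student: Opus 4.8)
The plan is to verify the three group axioms for $\mathsf E_\nu$ (containing the identity, closed under inverses, closed under products) together with topological closedness, using the cocycle identity for $\beta^{AM}$ and the quasi-invariance of $\nu$. That $e \in \mathsf E_\nu$ is immediate by taking $\ga = e$. For closedness under inverses, I would start from the defining condition for $am \in \mathsf E_\nu$ applied to a Borel set $B \subset N^+e^+$ with $\nu(B)>0$, obtaining $\ga$ with $\nu\{\xi \in B \cap \ga^{-1}B : \beta^{AM}_\xi(\ga^{-1},e) \in am(AM)_\e\} > 0$. Writing $\eta = \ga^{-1}\xi$ and using equivariance $\beta^{AM}_{\ga\eta}(\ga^{-1},e) = \beta^{AM}_\eta(e,\ga)^{-1}$ together with the cocycle identity $\beta^{AM}_\eta(e,\ga)\,\beta^{AM}_\eta(\ga,e) = e$, the set gets carried (via $\xi \mapsto \ga^{-1}\xi$, which is a $\nu$-quasi-invariant Borel bijection between $B \cap \ga^{-1}B$ and $\ga^{-1}B \cap B$) to a positive-measure set witnessing $(am)^{-1}$ for the same $B$ — modulo shrinking $\e$ by a bounded factor, which is harmless since $am(AM)_\e$ ranges over a neighborhood basis. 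The map $B' = \ga^{-1}B$ has positive $\nu$-measure because $\nu$ is $\Ga$-quasi-invariant, and since $B$ was arbitrary we conclude $(am)^{-1} \in \mathsf E_\nu$.

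For closedness under products, suppose $a_1m_1, a_2m_2 \in \mathsf E_\nu$ and fix $B$ with $\nu(B) > 0$ and $\e > 0$. Apply the definition of $a_1m_1$ to $B$ to get $\ga_1 \in \Ga$ and a positive-measure subset $B_1 \subset B \cap \ga_1^{-1}B$ on which $\beta^{AM}_\xi(\ga_1^{-1},e) \in a_1m_1(AM)_{\e/C}$ for a suitable constant $C$ absorbing the non-commutativity of $AM$ under multiplication of small balls. The set $\ga_1 B_1 \subset B$ has positive $\nu$-measure by quasi-invariance; apply the definition of $a_2m_2$ to it, getting $\ga_2$ and a positive-measure $B_2 \subset \ga_1 B_1 \cap \ga_2^{-1}(\ga_1 B_1)$ with $\beta^{AM}_\eta(\ga_2^{-1},e) \in a_2m_2(AM)_{\e/C}$. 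Pull back to $\xi \in \ga_1^{-1}B_2 \subset B_1$: then $\xi \in B$, and $\ga_2\ga_1 \xi = \ga_2(\ga_1\xi) \in \ga_2(\ga_1 B_1)$ while $\ga_1 \xi \in \ga_1 B_1$, so $\ga_1\xi \in B_2$, hence $\ga_1 \xi \in \ga_1 B_1$ forces $\xi \in B$ and the chain $\beta^{AM}_\xi((\ga_2\ga_1)^{-1},e) = \beta^{AM}_\xi(\ga_1^{-1}\ga_2^{-1},e)$ factors via the cocycle identity as $\beta^{AM}_\xi(\ga_1^{-1},\ga_1^{-1}\ga_2^{-1})\,\beta^{AM}_\xi(\ga_1^{-1},e)$; rewriting the first factor by equivariance as $\beta^{AM}_{\ga_1\xi}(e,\ga_2^{-1})$-type term shows it lies in $a_2m_2(AM)_{O(\e)}$, so the product lies in $a_1m_1 a_2m_2(AM)_\e$ after choosing $C$ appropriately. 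Since $\ga_1^{-1}B_2$ has positive measure (again by $\Ga$-quasi-invariance), this witnesses $a_1m_1\, a_2 m_2 \in \mathsf E_\nu$.

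For closedness of $\mathsf E_\nu$ as a subset of $AM$: if $a_n m_n \to am$ with $a_n m_n \in \mathsf E_\nu$, then given $B$ and $\e$, pick $n$ with $a_n m_n \in am(AM)_{\e/2}$; applying the definition of $a_n m_n$ with parameter $\e/2$ produces $\ga$ with a positive-measure set on which $\beta^{AM}_\xi(\ga^{-1},e) \in a_n m_n(AM)_{\e/2} \subset am(AM)_\e$, so $am \in \mathsf E_\nu$. The main obstacle I expect is bookkeeping the non-commutativity: all the "shrink $\e$ by a constant" maneuvers must be justified by the fact that $(AM)_\e$ is a neighborhood basis at $e$ and that, for fixed $x \in AM$, left/right translation and the product map $(AM)_{\e} \times (AM)_{\e} \to (AM)_{O(\e)}$ are uniformly controlled on a neighborhood of $e$ — standard but needs to be stated cleanly. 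A secondary point requiring care is that in the product step one must ensure the Borel set obtained at each stage genuinely has positive $\nu$-measure after applying $\ga_i^{\pm 1}$, which is exactly where $\Ga$-quasi-invariance of $\nu$ (equivalently, $\nu$ being a $(\Ga,\psi)$-Patterson–Sullivan measure) is used.
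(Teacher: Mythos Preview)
Your approach is essentially the paper's: verify the group axioms via the cocycle identity and $\Ga$-quasi-invariance of $\nu$, then check topological closedness by a triangle-inequality argument. Two points to clean up.

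First, in the product step your cocycle factorization is miswritten. The correct identity is
\[
\beta_\xi^{AM}(\ga_1^{-1}\ga_2^{-1},e)=\beta_\xi^{AM}(\ga_1^{-1}\ga_2^{-1},\ga_1^{-1})\,\beta_\xi^{AM}(\ga_1^{-1},e)=\beta_{\ga_1\xi}^{AM}(\ga_2^{-1},e)\,\beta_\xi^{AM}(\ga_1^{-1},e),
\]
which lands in $a_2m_2\,a_1m_1(AM)_\e$, not $a_1m_1\,a_2m_2(AM)_\e$; since the two elements are arbitrary this still gives closure under products, but your bookkeeping is swapped. The paper sidesteps this by applying the second essential value directly to the set $B_1$ (rather than to $\ga_1 B_1$): one gets $B_2\subset B_1\cap\ga_2^{-1}B_1\subset B\cap(\ga_1\ga_2)^{-1}B$ and the product $b_1b_2$ in one line, with no need to pull back through $\ga_1^{-1}$.

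Second, your concern about $O(\e)$ fudge constants from non-commutativity is unnecessary. The paper records at the outset that the metric restricted to $M$ is bi-$M$-invariant, so $m^{-1}M_\e m=M_\e$ for all $m\in M$; since $A$ is central in $AM$, this makes $(AM)_\e$ conjugation-invariant in $AM$, and then $b_1(AM)_{\e/2}\,b_2(AM)_{\e/2}\subset b_1b_2(AM)_\e$ exactly, with no absorbing constant $C$.
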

\begin{proof}
Since the metric $d$ restricted to $M$ is bi-$M$-invariant, we have
that for all $\e>0$, $M_\e^{-1}=M_\e$,  $m^{-1} M_{\e} m=  M_\e$ for all $m\in M$ and
$M_{ \e/2}M_{\e/2}\subset M_\e$.
Let $b_1, b_2\in\mathsf E_\nu$.
Let $B\subset \F$ be a Borel subset with $\nu(B)>0$ and let $\e>0$.
Since $b_i\in \mathsf E_\nu$ for $i=1,2$,  there exists $\ga_i\in\Ga$ such that
\begin{align*}
B_1&:=\{\xi \in B\cap\ga_1^{-1} B: \beta_\xi^{AM}(\ga_1^{-1}, e) \in b_1(AM)_{\e/2}\};\\
B_2&:=\{\xi \in B_1\cap\ga_2^{-1} B_1: \beta_\xi^{AM}(\ga_2^{-1}, e) \in b_2(AM)_{\e/2}\}    
\end{align*}
has a positive $\nu$-measure.
Note that 
$B_2\subset B\cap \ga_2^{-1}\ga_1^{-1}B$
and that for all 
$\xi\in B_2$
, we have
\begin{align*}\beta_{\xi}^{AM} (\ga_2^{-1}\ga_1^{-1}, e)&= \beta_{\ga_2\xi}^{AM}(\ga_1^{-1}, \ga_2)
= \beta_{\ga_2\xi}^{AM} (\ga_1^{-1}, e)\beta_{\xi}^{AM} (\ga_2^{-1},e)\\
&\in b_1 (AM)_{\e/2}b_2(AM)_{\e/2} \subset b_1b_2 (AM)_\e . \end{align*} 
Hence
$b_1b_2\in{\mathsf E}_\nu$.
This proves that ${\mathsf E}_\nu$ is a subgroup of $AM$. Now suppose that
a sequence $b_i\in {\mathsf E}_\nu$ converges to some $ b\in AM$. Let $\e>0$ and $B\subset \F$ be a Borel subset with $\nu(B)>0$.
Fix $i$ large enough so that $b_i(AM)_{\e/2}\subset  b (AM)_\e$, and let $\ga_i\in \G$ be such that
$\nu \{\xi \in B\cap\ga_i^{-1} B: \beta_\xi(\ga_i^{-1}, e) \in b_i(AM)_{\e/2}\}>0$.
Then
$\nu \{\xi\in  B\cap\ga_i^{-1}B : \beta_\xi(\ga_i^{-1}, e) \in b(AM)_{\e}\}>0$.
This proves that $b\in \mathsf E_\nu$. Hence ${\mathsf E}_\nu$ is closed.
\end{proof}

\begin{lemma}\label{lem.inv1}  Let $b_0\in \mathsf E_\nu$ be such that
 $\{b b_0b^{-1}: b\in AM\}\subset \mathsf E_\nu$. Then for any $\Ga$-invariant Borel function
 $h : G/N\to[0,1]$, we have
$$h(xb_0)=h(x)\quad\text{ for $\hat\nu$-a.e. $x$.}$$
\end{lemma}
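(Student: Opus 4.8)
The plan is to use the standard ergodic-theoretic characterization of essential values: an element $b_0$ of $AM$ being a $\nu$-essential value should force every $\Gamma$-invariant function on $G/N$ to be invariant under the right $b_0$-translation, provided the flow is suitably ergodic. Concretely, I would work with the measure $\hat\nu$ on $G/N \simeq \F \times AM$ from \eqref{eq.nutil2} and the $\Gamma$-action \eqref{eq.hopf3}. Given a $\Gamma$-invariant Borel $h : G/N \to [0,1]$, suppose toward a contradiction that $h(xb_0) \ne h(x)$ on a set of positive $\hat\nu$-measure. Disintegrating $\hat\nu$ over $\F$ via the projection $G/N \to \F$, there is a Borel set $B \subset N^+e^+ \cap \La$ with $\nu(B) > 0$ and parameters $\delta > 0$, $R > 0$ such that for $\xi \in B$ the function $am \mapsto h(\xi, am)$ on $AM$ (restricted to the bounded window $(AM)_R$ around some fixed point) differs from its $b_0$-translate by at least $\delta$ on a set of definite measure. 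This is where the local compactness of $AM$ and Lusin's theorem enter: after shrinking $B$, I can assume $am \mapsto h(\xi, am)$ is uniformly continuous on compact sets, uniformly in $\xi \in B$.

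Next I would invoke the hypothesis that $b_0$ is a $\nu$-essential value and, crucially, that its whole $AM$-conjugacy class lies in $\mathsf{E}_\nu$. The definition of essential value gives, for any $\e > 0$, some $\gamma \in \Gamma$ with $\nu\{\xi \in B \cap \gamma^{-1}B : \beta^{AM}_\xi(\gamma^{-1}, e) \in b_0 (AM)_\e\} > 0$. On this set, $\Gamma$-invariance of $h$ reads $h(\xi, am) = h(\gamma^{-1}\cdot(\xi, am)) = h(\gamma^{-1}\xi, \beta^{AM}_\xi(\gamma^{-1}, e)\, am)$, and since $\beta^{AM}_\xi(\gamma^{-1}, e)$ is within $(AM)_\e$ of $b_0$, the argument on the right is within $(AM)_\e$ of $(\gamma^{-1}\xi, b_0\, am)$. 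The conjugacy-class hypothesis is used to move the perturbation $(AM)_\e$ from the left of $b_0$ to the right (or to handle the fact that $\beta^{AM}_\xi(\gamma^{-1},e)$ need not commute with later group elements), so that one may iterate: by repeatedly applying the essential-value property one shows $h(\xi, am)$ is essentially constant along the right-$b_0$-orbit inside each fiber, contradicting the positive-measure discrepancy assumed above. Here the uniform continuity secured in the first step lets us pass from "$\e$-close" to "equal" in the limit $\e \to 0$.

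I expect the main obstacle to be the bookkeeping in transferring the perturbation across the non-abelian group $AM$ — this is precisely why the hypothesis $\{b b_0 b^{-1} : b \in AM\} \subset \mathsf{E}_\nu$ is imposed rather than merely $b_0 \in \mathsf{E}_\nu$, and getting the quantifiers right (choice of $B$ first, then $\gamma$ depending on $\e$, then $\e \to 0$) requires care. A secondary technical point is the measurable disintegration of $\hat\nu$ and checking that the "good" fiber property holds on a $\nu$-conull set of $\xi$; this is routine via Fubini but must be stated cleanly. Once these are in place, the contradiction is immediate, giving $h(xb_0) = h(x)$ for $\hat\nu$-a.e.\ $x$.
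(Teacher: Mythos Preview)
Your overall strategy---assume a discrepancy, localize to a set $B$ of positive $\nu$-measure, regularize $h$ to be uniformly continuous in the $AM$-variable, then apply the essential-value property to derive a contradiction with $\Gamma$-invariance---matches the paper's proof. Two points, however, deserve sharpening.

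First, the way you propose to use the conjugacy hypothesis is imprecise and, as written, does not obviously work. You apply the essential-value property for $b_0$ itself, obtaining $\beta^{AM}_\xi(\gamma^{-1},e)\approx b_0$, and then hope to ``move the perturbation from the left of $b_0$ to the right.'' But $\beta$ acts on the \emph{left} of $am$, so from $h(\xi,am)=h(\gamma\xi,\beta\,am)$ you get $h(\gamma\xi,b_0\,am)$, not $h(\gamma\xi,am\,b_0)$; writing $b_0\,am=am\cdot((am)^{-1}b_0\,am)$ produces a conjugate that varies with $am$, so the hypothesis on a fixed conjugacy class does not apply directly. The paper's fix is cleaner: first use Fubini to find a \emph{fixed} $b_1\in AM$ and $r,\e>0$ with
\[
Q_{b_0}:=\{\xi: h(\xi,b_1)<r-\e<r+\e<h(\xi,b_1b_0)\}
\]
of positive $\nu$-measure, and then apply the essential-value property not for $b_0$ but for the fixed conjugate $b_1b_0b_1^{-1}\in\mathsf E_\nu$. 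This yields $\gamma$ with $\beta^{AM}_\xi(\gamma^{-1},e)\approx b_1b_0b_1^{-1}$ on a positive-measure subset of $Q_{b_0}\cap\gamma^{-1}Q_{b_0}$, whence $\beta\,b_1\approx b_1b_0$ and $h(\gamma\xi,\beta\,b_1)>r>h(\xi,b_1)$, contradicting $\Gamma$-invariance in one step. No iteration is needed.

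Second, a minor difference: the paper obtains the required equicontinuity of $am\mapsto h(\xi,am)$ by convolving $h$ with an approximate identity on $AM$ rather than via Lusin; either works, but convolution gives uniform equicontinuity for \emph{all} $\xi$ at once, which makes the final inequality hold on the full set $Q_{b_0}$ without further shrinking.
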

\begin{proof} In view of  the homeomorphsim $N^+AMN/N\to N^+e^+ \times AM$ given by $gN\mapsto (g^+, \beta_{g^+}(e,g))$ and
 \eqref{nf}, it suffices to show that for
any $\G$-invariant Borel function $h:N^+e^+\times AM\to [0,1]$, $h(\xi, b )=h(\xi , bb_0)$ for $\nu$-a.e. $\xi$ and for all $b\in AM$.
Suppose not. Then there exists $b_1\in AM$ such that
$\nu\{\xi\in \F : h(\xi, b_1)<
h(\xi, b_1b_0)\}>0$ or $\nu\{\xi\in \F : h(\xi, b_1)< h(\xi, b_1b_0)\}>0$. We consider the first case; the second case can be treated similarly.
Then there exist $r,\e>0$ such that
$$
Q_{b_0}:=\{\xi\in N^+e^+ : h(\xi, b_1 )<r-\e<r+\e<h(\xi, b_1b_0)\}
$$
has a positive $\nu$-measure.
By considering the convolution of $h$ with the approximation of identity functions on $AM$,
we may assume without loss of generality that the family $h(\xi, \cdot)$, $\xi\in N^+e^+$,
 is uniformly equi-continuous on $AM$. Hence there exists $\e'>0$ such that 
 for all $\xi\in Q_{b_0}$ and $b\in (AM)_{\e'}$,
\be\label{hi} h(\xi, b_1b )<r<h(\xi, b_1b_0b).\ee

Since $b_1b_0b_1^{-1}\in\mathsf E_\nu$ by the hypothesis
and  $\nu(Q_{b_0})>0$, there exists $\ga\in\Ga$ such that
$$
\cal Q:=\{\xi\in Q_{b_0}\cap \ga^{-1} Q_{b_0}: \beta_{\xi}(\ga^{-1},e)\in b_1b_0b_1^{-1} (AM)_{\e'/2}\}
$$
has a positive $\nu$-measure.
We now claim that  
$$h(\xi, b_1 b)<r< h(\gamma (\xi, b_1b))$$ for all $\xi\in \cal Q$ and for
all $b\in (AM)_{\e'/2}$. This yields a contradiction to the $\Gamma$-invariance of $h$.
Since $\cal Q\subset Q_{b_0}$, we have $h(\xi, b_1 b)<r$ for all $b\in (AM)_{\e'}$ by \eqref{hi}.
On the other hand,
for all $b\in (AM)_{\e'/2}$ and $\xi\in \cal Q$, we have
$$\beta_\xi(\ga^{-1}, e) b_1b\in b_1 b_0 b_1^{-1} (AM)_{\e'/2} b_1 b \subset
b_1b_0 (AM)_{\e'} ,$$
since $m^{-1} M_{\e'/2} m M_{\e'/2} \subset M_{\e'}$ for all $m\in M$.
Since $\ga\xi\in Q_{b_0}$ and  $\gamma (\xi, b_1b)=(\gamma\xi, \beta_\xi(\ga^{-1}, e) b_1b)$,
it follows from \eqref{hi} that $h(\gamma (\xi, b_1b))>r$. This proves the claim.
\end{proof}

\section{$N$-ergodic decompositions of BR-measures}
Let $\Ga<G$ be an Anosov subgroup. We prove Theorem \ref{main}(2) in this section.

\subsection{Ergodic decomposition of an infinite measure}
The following version of ergodic decomposition of any Radon measure can be deduced from \cite[Thm.~5.2]{GS}. 
\begin{prop}[Ergodic decomposition]\label{prop.ED}
Let $G$ be a 
locally compact second countable group.
Let $N<G$ be a closed subgroup and $M<G$ be a compact subgroup
normalizing $N$.
Suppose that $NM$
acts
continuously
on a locally compact, $\sigma$-compact,
standard Borel space $(X,\cal B)$, preserving a Radon measure $\mu$ on $X$.
\begin{enumerate}
\item
There exists a Borel map $x\mapsto \mu_x$  
from $X$ to the space of $N$-invariant ergodic Radon measures on $X$ and an $M$-invariant probability measure $\mu^*$ on $X$
 equivalent to $\mu$ with the following properties:
\subitem
(a) $\mu_x=\mu_{xn}$ for every $x\in X$ and $n\in N$.
\subitem
(b) For all nonnegative Borel function $f : X\to\bb R$, we have 
$$
\int f\,d\mu_x=\bb E_{\mu^*}\left( f\frac{d\mu}{d\mu^*}|\cal S_N \right)(x)\qquad \text{for }\mu\text{-a.e. }x\in X,
$$
where $\cal S_N:=\{B\in\cal B :  B.n=B\text{ for all }n\in N\}$.
In particular, we have $$\mu=\int_{x\in X} \mu_x\,d\mu^*(x).$$
If $\mu$ is finite, we can take $\mu^*=\mu$.
\item
Let $\cal T\subset\cal S_N$ be the smallest $\sigma$-algebra such that the map $x\mapsto\mu_x$ is $\cal T$-measurable.
Then $\cal T$ is countably generated, $\cal T=\cal S_N$ mod $\mu$, $\mu_x([y]_{\cal T})=0$ for all $y\not\in[x]_{\cal T}$, and  $\mu_x([x]_{\cal T}^c)=0$ for all $x,y\in X$.
Here $[y]_{\cal T}=\cap_{y\in C\in \cal T} C$ denotes the atom of $y$ in $\cal T$.
\item For each $m\in M$, we have $\mu_{xm}=\mu_x.m$ for $\mu$-a.e. $x\in X$.

\end{enumerate}
\end{prop}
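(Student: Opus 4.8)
The plan is to recognize the proposition as a transcription of the ergodic decomposition theorem for $N$-quasi-invariant probability measures (\cite[Thm.~5.2]{GS}) into the setting of a Radon measure, and then to reduce to the finite case, apply that theorem, and transfer back by a change of density. First, since $X$ is locally compact and $\sigma$-compact I can choose a continuous function $\phi_0\colon X\to(0,\infty)$ with $\int_X\phi_0\,d\mu=1$, and then average it over the compact group $M$: the function $\phi(x):=\int_M\phi_0(xm)\,dm$ is again continuous and strictly positive, and since $\mu$ is $M$-invariant it still has total mass $1$. Thus $\mu^*:=\phi\,\mu$ is an $M$-invariant Borel probability measure equivalent to $\mu$, with $\tfrac{d\mu}{d\mu^*}=1/\phi$; and because $\mu$ is $N$-invariant, $\mu^*$ is $N$-quasi-invariant with Radon--Nikodym cocycle $\tfrac{d(R_n)_*\mu^*}{d\mu^*}(x)=\phi(xn^{-1})/\phi(x)$, where $R_n$ denotes right translation by $n\in N$.

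Next I would apply \cite[Thm.~5.2]{GS} to the continuous $N$-action on the standard Borel space $X$ and the $N$-quasi-invariant probability measure $\mu^*$. This produces a Borel map $x\mapsto\mu^*_x$ into $N$-ergodic probability measures with $\mu^*_x=\mu^*_{xn}$ for all $n$, each $\mu^*_x$ quasi-invariant with Radon--Nikodym cocycle equal to the restriction of that of $\mu^*$, satisfying $\int h\,d\mu^*_x=\mathbb E_{\mu^*}(h\mid\mathcal S_N)(x)$ for $\mu^*$-a.e.\ $x$, together with the countable generation of $\mathcal T$, the equality $\mathcal T=\mathcal S_N$ mod $\mu^*$, and the description of the atoms $[x]_{\mathcal T}$. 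The hard part will be exactly this step: essentially all the analytic content is imported, and the one point that needs care is that the Radon--Nikodym cocycle is preserved along the decomposition, so I may need a short lemma or a slightly sharpened citation; this is standard, the Radon--Nikodym derivative being a pointwise object, but it is the mechanism that makes the transfer work.

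Then I would transfer back by setting $\mu_x:=\phi^{-1}\mu^*_x$. Using the cocycle formula above together with cocycle-invariance, a one-line change of variables gives $(R_n)_*\mu_x=\mu_x$, so each $\mu_x$ is genuinely $N$-invariant; it is $N$-ergodic since it is equivalent to $\mu^*_x$, and it is Radon because $\phi$ is continuous and strictly positive, whence $\mu_x(K)\le(\min_K\phi)^{-1}<\infty$ for compact $K$. Property (a) is inherited from $\mu^*_x=\mu^*_{xn}$; for (b) one has $\int f\,d\mu_x=\int(f/\phi)\,d\mu^*_x=\mathbb E_{\mu^*}(f/\phi\mid\mathcal S_N)(x)=\mathbb E_{\mu^*}\!\big(f\,\tfrac{d\mu}{d\mu^*}\mid\mathcal S_N\big)(x)$, and integrating against $\mu^*$ recovers $\mu=\int\mu_x\,d\mu^*(x)$; if $\mu$ is finite I take $\phi\equiv1$. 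All statements in (2) pass to $\mu_x$ verbatim since $\mu_x\sim\mu^*_x$ and $\mu\sim\mu^*$.

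Finally, for (3): since $M$ normalizes $N$, the $\sigma$-algebra $\mathcal S_N$ is $M$-invariant, and since $\mu^*$ is $M$-invariant one gets $\mathbb E_{\mu^*}(h\circ R_m\mid\mathcal S_N)=\mathbb E_{\mu^*}(h\mid\mathcal S_N)\circ R_m$ for every $m\in M$. Applying this with $h=f/\phi$ and using $M$-invariance of $\phi$ yields $\int f\,d(\mu_x.m)=\int f\,d\mu_{xm}$ for $\mu^*$-a.e.\ $x$ and each fixed $f$; running over a countable dense family in $C_c(X)$ then gives $\mu_{xm}=\mu_x.m$ for $\mu$-a.e.\ $x$. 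Apart from the cocycle-preservation point flagged above, every remaining verification is a routine change-of-variable or conditional-expectation computation.
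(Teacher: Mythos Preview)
Your proposal is correct and follows essentially the same route as the paper: reduce to a probability measure by choosing an $M$-invariant positive density $\phi\in L^1(\mu)$, apply \cite[Thm.~5.2]{GS} to $\mu^*=\phi\,\mu$ with the cocycle $\rho(n,y)=\log\big(\phi(yn^{-1})/\phi(y)\big)$, and then set $\mu_x=\phi^{-1}\mu^*_x$; the cocycle-preservation point you flag is precisely what the theorem in \cite{GS} provides when applied with this $\rho$, so no additional lemma is needed. Your treatment of (3) via the $M$-invariance of $\mathcal S_N$ and of $\mu^*$ is also exactly the paper's argument.
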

\begin{proof}
Fix an $M$-invariant positive function $\varphi\in L^1(\mu)$ with $\int\varphi\,d\mu=1$.
Then  $d\mu^*:=\varphi\,d\mu$ defines an $N$-quasi-invariant and $M$-invariant
 probability measure on $X$.
By applying \cite[Thm.~5.2]{GS} to $\mu^*$ with the cocycle $\rho : N\times X\to\bb R$ given by $\rho(n,y)=\log \frac{\varphi(yn^{-1})}{\varphi(y)}$, we get a Borel map $x\mapsto \mu^*_x$ from $X$ to the space of $N$-ergodic probability measures such that for all nonnegative Borel function $f : X\to\bb R$, we have
$$
\int f\,d\mu^*_x=\bb E_{\mu^*}(f|\cal S_N)(x) \quad\text{for $\mu^*$-a.e. $x\in X$},
$$
 and 
 $\frac{d(n. \mu^*_x)}{d\mu^*_x}(y)=\frac{\varphi(yn^{-1})}{\varphi(y)}$.
In particular, we have $\mu^*=\int\mu^*_x\, d\mu^*(x)$.
Now define a Radon measure $\mu_x$ on $X$ by $d\mu_x:=\frac{1}{\varphi}\,d\mu^*_x$.
A direct computation shows that $\mu_x$ is $N$-invariant, ergodic for all $x\in X$ and $(1)$ holds.  $(2)$ follows from
the corresponding statement on $\mu^*_x$ from \cite[Thm.~5.2]{GS}.

In order to prove $(3)$,  we compute that
for a non-negative Borel function $f : X\to\bb R$, 
\begin{align*}
\mu^*_{xm}(f)=\bb E_{\mu^*}(f|\cal S_N)(xm)=\bb E_{\mu^*}(m.f|\cal S_N)(x)=\mu^*_x(m.f);
\end{align*}
 the second equality follows since $ \cal S_N.m=\cal S_N$  and $\mu^*$ is $M$-invariant.
It follows that $\mu^*_{xm}=\mu^*_x.m$ for $\mu$-a.e. $x\in X$; this implies (3).
\end{proof}

\subsection{$\pc$-semi-invariant measures}
In terms of the coordinates $G=G/\pc \times A\mc N$,
we have
\be\label{brbr} d\tilde m_\psi^{\BR}  =d\tilde \nu_\psi  e^{\psi(\log a)} dadm dn.\ee

Recall that a measure $\mu$ on $\G\ba G$ is $\pc$-semi-invariant if
there exists a character $\chi:P\to \br_+$ such that for all $p\in \pc$,
$p_*\mu=\chi(p)\mu.$ Since $\chi$ must be trivial on $N\mc$,
 $\mu$ is necessarily $N\mc$-invariant and if we set $\chi_\mu \in \fa^*$
 to be $-\log (\chi|_A)$, we get  that for all $a\in A$,
$$a_*\mu=e^{-\chi_\mu (\log a) }\mu.$$ We set $\psi_\mu:=\chi_\mu+2\rho\in\fa^*$.

\begin{proposition}\label{prop.NMA} Let $\mu$ be
a $\pc$-semi invariant and $N$-ergodic Radon measure  supported on $\cal E$.  
Let $\tilde \mu$ denote its $\Ga$-invariant
lift to $G\simeq G/P^\circ\times AM^\circ N$.
Then $\psi_\mu\in \dg$ and $d\tilde \mu$ is proportional to $d\tilde\nu_{\psi_\mu}|_{\La_0}  e^{\psi_\mu(\log a)} da\,dm\, dn$
for some $\Ga$-minimal subset $\La_0\in \cal Y_\Ga$, or equivalently,
$\mu$ is proportional to $m_{\psi_\mu}^{\BR}|_{\E_0}$ for some $\E_0\in \yg$.  \end{proposition}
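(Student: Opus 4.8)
The statement asserts a structure theorem: every $\pc$-semi-invariant $N$-ergodic Radon measure on $\cal E$ is, up to scalar, one of the measures $m_\psi^{\BR}|_{\cal E_0}$. My plan is to first identify the linear functional $\psi_\mu$ and reduce to the case where the boundary part of the measure is a Patterson--Sullivan measure, then use $N$-ergodicity together with the machinery of $\nu$-essential values to pin down the support (to a single $\cal E_0$) and the transverse part of the measure.

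Step 1: Disintegrate along the fibration $G/\pc\times A\mc N$. Since $\mu$ is $N\mc$-invariant and $a_*\mu=e^{-\chi_\mu(\log a)}\mu$, the lift $\tilde\mu$ must factor as $d\tilde\mu([g])=d\bar\mu(g\pc)\,e^{\psi_\mu(\log a)-2\rho(\log a)}\,da\,dm\,dn$ for some Radon measure $\bar\mu$ on a $\Gamma$-minimal subset $\tilde\La\subset G/\pc$ (that the support of $\bar\mu$ is $\Gamma$-minimal follows from $N$-ergodicity of $\mu$ and the fact that $\pc$-minimal subsets of $\Gamma\ba G$ correspond to $\Gamma$-minimal subsets of $G/\pc$, as recalled in subsection \ref{defyy}; more precisely, $N$-ergodicity forces $\bar\mu$ to be supported on the closure of a single $\Gamma g[\pc]$). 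Now project $\bar\mu$ down to $\La\subset G/P$: the resulting measure $\bar\nu$ on $\La$ is $\Gamma$-quasi-invariant, and the $\pc$-semi-invariance forces its Radon--Nikodym cocycle to be $\frac{d\ga_*\bar\nu}{d\bar\nu}(\xi)=e^{\psi_\mu(\log\beta_\xi^A(e,\ga))}$. By the uniqueness of conformal densities on the limit set of an Anosov group (\cite[Thm.~1.3]{LO}), this forces $\psi_\mu\in\dg$ and $\bar\nu\propto\nu_{\psi_\mu}$, hence $\bar\mu$ is proportional to $\tilde\nu_{\psi_\mu}$ restricted to $\tilde\La$ — except that a priori $\bar\mu$ might only be a $\Gamma$-ergodic component of $\tilde\nu_{\psi_\mu}$, i.e. its restriction to a $\Gamma$-minimal $\La_0\in\cal Y_\Ga$; this is exactly the content of Theorem \ref{thm.AE}(1) applied to $\npc$. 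So after this step we know $d\tilde\mu\propto d\tilde\nu_{\psi_\mu}|_{\La_0}\,e^{\psi_\mu(\log a)}\,da\,dm\,dn$ \emph{up to the ambiguity in the $dm$-factor}: the transverse measure on $\mc$ is only known to be Haar up to the fact that we used $\mc$-invariance, and the genuine issue is whether $\mu$ could fail to be $N$-ergodic, i.e. whether what I've written is an ergodic decomposition rather than a single component.

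Step 2 (the main obstacle): show that the candidate measure $m_{\psi_\mu}^{\BR}|_{\cal E_0}$ is genuinely $N$-ergodic, so that no finer decomposition survives and $\mu$ must equal one such component. This is where Theorem \ref{main}(1) is needed, and it is precisely the hard part: one has to show $AM^\circ\subset\mathsf E_{\nu_{\psi_\mu}}$ using Proposition \ref{prop.E} and Proposition \ref{cor.A} (elements of the generalized length spectrum with large $\psi$-value are essential values, and these generate $AM^\circ$), and then combine with the $NM$-ergodicity of $m_{\psi_\mu}^{\BR}$ from \cite{LO} and Lemma \ref{lem.inv1} (essential values act trivially on $\Gamma$-invariant functions on $G/N$) to conclude that the $N$-ergodic decomposition of $m_{\psi_\mu}^{\BR}$ is exactly the partition into the pieces $m_{\psi_\mu}^{\BR}|_{\cal E_0}$, $\cal E_0\in\yg$, matching $[M:M_\Ga]=[NM:N]$ components. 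The proof of \emph{this} proposition then just invokes that theorem.

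Step 3: Conclude. Given an arbitrary $\pc$-semi-invariant $N$-ergodic $\mu$ on $\cal E$, Step 1 identifies $\psi_\mu\in\dg$ and shows $\mu$ is absolutely continuous with respect to, and (by ergodicity of $\mu$) is a restriction of, the ergodic-decomposition data of $m_{\psi_\mu}^{\BR}$; Step 2 identifies that decomposition as $\{m_{\psi_\mu}^{\BR}|_{\cal E_0}\}_{\cal E_0\in\yg}$; hence $\mu\propto m_{\psi_\mu}^{\BR}|_{\cal E_0}$ for exactly one $\cal E_0$. A technical point to handle carefully: the disintegration in Step 1 must be carried out for a general Radon (not finite) measure, which is why Proposition \ref{prop.ED} (the ergodic decomposition for Radon measures, after normalizing to an equivalent $M$-invariant probability measure) is invoked rather than the classical finite-measure version. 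I expect the genuinely delicate bookkeeping to be in matching the ``$\Gamma$-ergodic component $\La_0$'' on the boundary side with the ``$\pc$-minimal subset $\cal E_0$'' on the $\Gamma\ba G$ side under the duality of subsection \ref{defyy}, and in verifying that the absolute continuity upgraded from Step 1 is strong enough (equality of $\sigma$-algebras mod $\mu$, as in Lemma \ref{lem.sub} and Corollary \ref{cor.dec}) to force proportionality rather than merely mutual absolute continuity.
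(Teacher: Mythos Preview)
Your Step 1 is essentially the paper's entire proof, and it is already complete; the confusion lies in what you think remains afterward.

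First, there is no ``ambiguity in the $dm$-factor''. Since $\mu$ is $P^\circ$-semi-invariant it is in particular $M^\circ N$-invariant, and on the fibers $AM^\circ N$ of $G\simeq G/P^\circ\times AM^\circ N$ the only Radon measure that is $M^\circ N$-invariant and transforms by $e^{-\chi_\mu(\log a)}$ under $A$ is, up to scalar, $e^{\psi_\mu(\log a)}\,da\,dm\,dn$. The paper cites \cite[Prop.~10.25]{LO} for exactly this disintegration; no further work is needed on the fiber.

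Second, you do not need to prove that $m_{\psi_\mu}^{\BR}|_{\cal E_0}$ is $N$-ergodic. The hypothesis already gives you that $\mu$ is $N$-ergodic; this implies the boundary measure $\tilde\nu$ on $G/P^\circ$ is $\Gamma$-ergodic. Projecting to $G/P$ yields a $(\Gamma,\psi_\mu)$-conformal measure, so $\psi_\mu\in D_\Gamma^\star$ and $\pi_*\tilde\nu=\nu_{\psi_\mu}$ (by \cite[Thm.~7.7 and~1.3]{LO}). Then $\tilde\nu\ll\tilde\nu_{\psi_\mu}$, and since $\tilde\nu$ is $\Gamma$-ergodic, Theorem~\ref{thm.AE}(1) (equivalently Proposition~\ref{lem.AI}) forces $\tilde\nu\propto\tilde\nu_{\psi_\mu}|_{\Lambda_0}$ for some $\Lambda_0\in\mathcal Y_\Gamma$. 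Plugging back gives $\tilde\mu\propto\tilde m_{\psi_\mu}^{\BR}|_{\tilde{\cal E}_0}$, and you are done. Nothing in this chain asserts or requires that $m_{\psi_\mu}^{\BR}|_{\cal E_0}$ itself be $N$-ergodic.

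Your Step 2 is therefore unnecessary, and worse, it is circular: Proposition~\ref{prop.NMA} is an \emph{input} to Proposition~\ref{lem.in}, which in turn is an input to Theorem~\ref{main}(1). Invoking Theorem~\ref{main}(1) here would make the whole argument of Section~7 circular. The logical flow is: Proposition~\ref{prop.NMA} (classify $N$-ergodic $P^\circ$-semi-invariant measures) $\Rightarrow$ Proposition~\ref{lem.in} (if $M^\circ\subset\mathsf E_{\nu_\psi}$ then $m_\psi^{\BR}|_{\cal E_0}$ is $N$-ergodic) $\Rightarrow$ Theorem~\ref{main}(1), not the reverse.
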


\begin{proof}
Since $\tilde\mu$ is a right $P^\circ$-semi-invariant measure on $G\simeq G/P^\circ\times AM^\circ N$, up to a positive constant multiple,
we have
$$
d\tilde\mu=e^{\tilde\chi(\log a)}d\tilde\nu\,da\,dm\,dn
$$
for some Radon measure $\tilde\nu$ on $G/P^\circ$ and $\tilde\chi\in\frak a^*$ \cite[Proposition 10.25]{LO}.
Since $a_*{\tilde \mu}=e^{-\chi_\mu (\log a) }{\tilde \mu}$, it follows $\tilde\chi=\psi_\mu$.
Denote by $\pi : G/P^\circ\to G/P$ the projection map.
Since $\tilde\mu$ is right $N$-ergodic,  $\tilde\nu$ is a $\Ga$-ergodic measure on $G/P^\circ$.
And since $\tilde\mu$ is $\Ga$-invariant, $\pi_*\tilde\nu$ is a $(\Ga,\psi_\mu)$-conformal measure on $G/P$ (cf. \cite[Prop. 10.25]{LO}).
In particular, $\psi_{\mu}\in D_{\Ga}^\star$ by \cite[Thm.~7.7]{LO}.
Let $\tilde\nu_{\psi_\mu}$ be the $M$-invariant lift of $\nu_{\psi_\mu}:=\pi_*\tilde\nu$ to $G/P^\circ$.
Since $\tilde\nu\ll\tilde\nu_{\psi_\mu}$ and $\tilde\nu$ is $\Ga$-ergodic, $\tilde\nu$ is proportional to
$\tilde\nu_{\psi_\mu}|_{\La_0}$ for some $\Ga$-minimal subset $\La_0\in \cal Y_\Ga$ by Proposition \ref{lem.AI}. This completes the proof.
\end{proof}

\subsection{Essential values and Ergodicity}

We fix $\psi\in \dg$ for the rest of the section.
Let $\nu_\psi$ be the unique $(\Ga, \psi)$-Patterson Sullivan measure on $\La$.
Let $\mathsf E_{\nu_\psi}$ be the set of essential values as defined in Definition \ref{def.ess}.
\begin{prop}\label{lem.in}
If $M^\circ\subset \mathsf E_{\nu_\psi}$, then for any $\E_0\in \yg$,
 $m_\psi^{\BR}|_{\E_0}$ is $N$-ergodic.
\end{prop}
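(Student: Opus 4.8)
The plan is to deduce $N$-ergodicity of $m_\psi^{\BR}|_{\E_0}$ from the hypothesis $M^\circ\subset\mathsf E_{\nu_\psi}$ by combining the abstract ergodic decomposition (Proposition \ref{prop.ED}) with the essential-value mechanism of Lemma \ref{lem.inv1} and the classification of $\pc$-semi-invariant $N$-ergodic measures (Proposition \ref{prop.NMA}). First I would recall from \cite{LO} that $m_\psi^{\BR}$ is $NM$-invariant and $NM$-ergodic, and apply Proposition \ref{prop.ED} to $X=\Ga\ba G$ (restricted to $\cal E$, or to a suitable $NM$-invariant locally compact $\sigma$-compact piece) with the subgroups $N$ and $M$: this produces the $N$-ergodic decomposition $m_\psi^{\BR}=\int \mu_x\,d(m_\psi^{\BR})^*(x)$, with $\mu_{xm}=\mu_x.m$ for a.e.\ $x$ and each $m\in M$. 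Each $\mu_x$ is an $N$-invariant ergodic Radon measure, and since $m_\psi^{\BR}$ is $\pc$-quasi-invariant (indeed $\pc$-semi-invariant with character coming from $\psi$), the conditional measures $\mu_x$ can be taken $\pc$-semi-invariant as well for a.e.\ $x$ (the Radon--Nikodym cocycle of $\pc$ descends to the $\sigma$-algebra $\cal S_N$, being already $\cal S_N$-measurable). So by Proposition \ref{prop.NMA} each $\mu_x$ is proportional to $m_\psi^{\BR}|_{\E_0'}$ for some $\E_0'\in\yg$, i.e.\ the $N$-ergodic components are exactly restrictions to $\pc$-minimal sets, \emph{provided} we know there is no finer splitting within a single $\E_0$.

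The heart of the matter is therefore to rule out a finer decomposition, and this is where $M^\circ\subset\mathsf E_{\nu_\psi}$ enters. In the coordinates $G=G/\pc\times A\mc N$ we have \eqref{brbr}, $d\tilde m_\psi^{\BR}=d\tilde\nu_\psi\,e^{\psi(\log a)}\,da\,dm\,dn$, and the right $N$-invariant objects correspond to $\Ga$-invariant functions on $G/N\simeq \F\times AM$ equipped with $\hat\nu$ from \eqref{eq.nutil2}. The point is that the $\sigma$-algebra $\cal S_N$ of $N$-invariant Borel sets pulls back to the $\Ga$-invariant Borel functions $h:G/N\to[0,1]$, and by Lemma \ref{lem.inv1} every such $h$ is invariant under right translation by every $b_0\in\mathsf E_{\nu_\psi}$ whose $AM$-conjugacy class stays in $\mathsf E_{\nu_\psi}$. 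Since $M^\circ$ is normal in $AM$ (indeed $AM$ centralizes $A$ and normalizes $M^\circ$) and $M^\circ\subset\mathsf E_{\nu_\psi}$ by hypothesis, the whole conjugacy class of any $b_0\in M^\circ$ lies in $\mathsf E_{\nu_\psi}$; hence every $\Ga$-invariant Borel $h:G/N\to[0,1]$ is right $M^\circ$-invariant $\hat\nu$-a.e. Translating back: every $N$-invariant Borel function on $\Ga\ba G$ is $NM^\circ$-invariant mod $m_\psi^{\BR}$, so $\cal S_N=\cal S_{NM^\circ}$ mod $m_\psi^{\BR}$.

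Now I would combine this with the decomposition. By Proposition \ref{prop.ED}(2), the $N$-ergodic components $\mu_x$ generate (equivalently, are measurable with respect to) $\cal S_N$; but $\cal S_N=\cal S_{NM^\circ}$ mod $m_\psi^{\BR}$, so the partition into $N$-ergodic components is the same, mod $m_\psi^{\BR}$, as the partition into $NM^\circ$-ergodic components. On the other hand, from \cite{LO} the measure $m_\psi^{\BR}$ is $NM$-ergodic, hence the $NM^\circ$-ergodic components are permuted transitively by $M/M^\circ$, which is the same combinatorics as the action of $M/M^\circ$ on $\yg\simeq M_\Ga\ba M$ — more precisely, the restriction $m_\psi^{\BR}|_{\E_0}$ is $NM^\circ$-ergodic because $\E_0$ is a single $\pc$-minimal (indeed $AN$-minimal, by the remark after \eqref{pg}) set and the extra $M^\circ\subset\mathsf E_{\nu_\psi}$ forces ergodicity exactly along $\E_0$ by the essential-value argument. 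Therefore each $m_\psi^{\BR}|_{\E_0}$ coincides with an $N$-ergodic component, proving the proposition. The main obstacle I anticipate is the bookkeeping in matching the abstract conditional measures $\mu_x$ from Proposition \ref{prop.ED} with the concrete restrictions $m_\psi^{\BR}|_{\E_0}$: one must check that the $M^\circ$-invariance of $\cal S_N$ rules out \emph{any} proper $N$-invariant subset of positive measure inside $\E_0$ — equivalently, that the transitivity-group/essential-value input is strong enough to see all of $\E_0$ — and this requires carefully using that $\E_0$ is a single $AN$-minimal set together with Lemma \ref{lem.inv1}; the rest is routine manipulation of disintegrations.
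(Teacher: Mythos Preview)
Your overall architecture matches the paper's: apply Proposition \ref{prop.ED} to get the $N$-ergodic decomposition $m_\psi^{\BR}=\int\mathsf m_x\,d\mathsf m^*$, use Lemma \ref{lem.inv1} together with $M^\circ\lhd AM$ and the hypothesis $M^\circ\subset\mathsf E_{\nu_\psi}$ to conclude that every $\Gamma$-invariant Borel function on $G/N$ is $M^\circ$-invariant $\hat\nu$-a.e.\ (hence $\cal S_N=\cal S_{NM^\circ}$), and finish via Proposition \ref{prop.NMA}. The use of $NM$-ergodicity to see that $M$ permutes the finitely many $N$-ergodic components transitively is also correct and is what the paper does.

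However, there is a genuine gap at the step you flag as ``routine'': the claim that each $\mathsf m_x$ is $\pc$-semi-invariant. Your justification --- that the Radon--Nikodym cocycle of $A$ ``descends to $\cal S_N$, being already $\cal S_N$-measurable'' --- is not enough. The derivative $d(a_*m_\psi^{\BR})/dm_\psi^{\BR}$ is a constant, so of course it is $\cal S_N$-measurable; but this does \emph{not} prevent $a\in A$ from permuting the $N$-ergodic components nontrivially. All you get from $A$ normalizing $N$ is that $a_*\mathsf m_x$ is again $N$-ergodic, hence proportional to some $\mathsf m_{x'}$ --- possibly $x'\ne x$. Without $A$-semi-invariance of $\mathsf m_x$ you cannot invoke Proposition \ref{prop.NMA}, and your fallback assertion that ``$m_\psi^{\BR}|_{\E_0}$ is $NM^\circ$-ergodic because $\E_0$ is a single $AN$-minimal set'' is exactly the statement to be proved; minimality of the support says nothing about absence of a finer \emph{measurable} $N$-invariant splitting.

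The paper closes this gap with a short pigeonhole argument that you are missing. Once one knows (from the $M^\circ$-invariance step) that the $N$-ergodic components are finitely many, say $n=[M:M_*]$ of them for $M_*=\op{Stab}_M(\mathsf m_{x_0})$, one sets $A'=\{a\in A:\ a\text{ preserves the class of }\mathsf m_{x_0}\}$ and shows $A'=A$: for any unit $u\in\fa$ and $\e>0$, take $a=\exp\tfrac{\e u}{n+2}$; since $a$ permutes the $n$ components, two of $a,a^2,\ldots,a^{n+1}$ hit the same one, so some $a^k$ with $1\le k\le n+1$ lies in $A'$. Thus $A'$ is a closed subgroup meeting every ray near $0$, hence $A'=A$, and $\mathsf m_{x_0}$ is $A$-semi-invariant with the same character as $m_\psi^{\BR}$. \emph{Now} Proposition \ref{prop.NMA} applies and gives $\mathsf m_{x_0}\propto m_\psi^{\BR}|_{\E_0}$; transitivity of the $M$-action finishes the proof. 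You should insert this argument in place of the hand-wave about the cocycle descending.
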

\begin{proof} 
Let $m_\psi^{\BR}=\int_{X} \mathsf m_x\,\, d\mathsf m^*(x)$ be an $N$-ergodic decomposition as given by Proposition \ref{prop.ED} with $X=\G\ba G$.
Let $f\in C_c(\Gamma\ba G)$ and consider the map $ h(g):=\mm_{[g]}(f)$ for all $[g]\in X$. Note that $h$ defines a $\G$-invariant Borel function on $G/N$. 
Since $\mc$ is a normal subgroup of $AM$,  Lemma \ref{lem.inv1} implies that $h$ is $\mc$-invariant for $\hat \nu_\psi$-almost all. By Proposition \ref{prop.ED}(3),  it follows that $\mc< \op{Stab}_M (\mm_x)$ for almost all $x$; without loss of generality, we may assume
that $\mc<\op{Stab}_M (\mm_x)$ for all $x\in X$. Hence the finite group $S:=\mc\ba M$ acts on
$\{ \mm_x:x\in  X\}$.
Set $$\tilde\mm_x:=\frac{1}{[M:\mc]}\sum_{s\in \mc\ba M}\mm_{x}.s.$$
Since $m_\psi^{\BR}$ is $M$-invariant, we have $m_\psi^{\BR}=\int_X \tilde \mm_x d\mm^*(x)$.
As $\mm_{xm}=\mm_x.m$ for all $m\in M$,
the map $x\mapsto \tilde\mm_x$ is $NM$-invariant.
Since $m_\psi^{\BR}$ is $NM$-ergodic, $\tilde\mm_x$ is constant $\mm$-a.e. $x\in X$.
Therefore we may fix $x_0\in X$ so that $m_\psi^{\BR}=\tilde \mm_{x_0}$.
Set $M_*:=\op{Stab}_M(\mm_{x_0})$. Then 
$$m_\psi^{\BR}= \frac{1}{[M:M_*]} \sum_{s\in M_*\ba M} \mm_{x_0}.s
$$
where $\mm_{x_0}.s$ are mutually singular to each other. 
We claim that each $\mm_{x_0}.s$ is $A$-semi-invariant with $\psi_{\mm_{x_0}.s}=\psi$ for each $s\in M_*\ba M$.
It suffices to consider the case when $s=[M^*]$.
Let $$A':=\{a\in A: a\text{ preserves the measure class of } \mm_{x_0}\}.$$ As $A'$ is a closed subgroup of $A$,
it suffices to show that for any unit vector $u\in \fa$ and any $\e>0$, $\exp tu\in A'$ for some  $0<t<\e$.
Let $a= \exp \frac{\e u}{n+2} $ for $n=\# M/M^*$. Since $m^{\BR}_\psi$ is quasi-invariant under $a$ and has $n$ number of
ergodic components, it follows that for some $1\le k\le n+1$, $a^k.\mm_{x_0}$ is in the same measure class as $\mm_{x_0}$,
implying that $a^k\in A'$. Hence $A=A'$. As  $m^{\BR}_\psi$ is semi-invariant under $A$, the claim follows.
Therefore, by Proposition \ref{prop.NMA}, $\mm_{x_0}$ is proportional to $m^{\BR}_\psi|_{\E_0}$ for some $\E_0\in \yg$.
 Hence $M_*= \op{Stab}_M m^{\BR}_\psi|_{\E_0}=M_\Ga$.
Since the measures $\mm_{x_0}.s$ are mutually singular to each other, all $\E_0$'s are distinct.
Therefore $m_\psi^{\BR}=\sum_{\E_0\in \yg} c(\E_0)  \cdot m_\psi^{\BR}|_{\E_0}$ for some constant $c(\E_0)>0$.
It remains to observe $c(\E_0)=1$ as the supports of $m_\psi^{\BR}|_{\cal E_0}$ are mutually disjoint from each other.
\end{proof}

\noindent{\bf Proof of Theorem \ref{mmm}.}
Let $\cal O_\Ga$ denote the space  of all $N$-invariant ergodic and $P^\circ$-quasi-invariant
 Radon measures supported on $\cal E$, up to constant multiples.
 We write $\yg=\{\E_i: 1\le i\le k\}$ with $k=\#\yg=\#M/M_\Ga$.
 Consider the map $\iota: \dg\times \{1, \cdots, k\}\to \cal O_\Ga$ defined
 by $\iota (\psi, i)= m_\psi^{\BR}|_{\E_i}$.
 By Proposition \ref{lem.in}, $\iota$ is well-defined. Since any measure contained in $\cal O_\Ga$ must be 
 $\pc$-semi-invariant, being $N$-ergodic,
 Proposition \ref{prop.NMA} implies that $\iota$ is surjective.
That $\iota$ is indeed a homeomorphism now follows because the map $\psi\mapsto m_\psi^{\BR}$ is a homeomorphism
between $\dg$ and  the space  of all $NM$-invariant ergodic and $A$-quasi-invariant
 Radon measures supported on $\cal E$, up to constant multiples, as shown in \cite{LO}.
  This implies Theorem \ref{mmm}, as $\dg$ is homeomorphic to  $ \br^{\text{rank}\,G-1}$ \cite{LO}.
 
 \subsection{The largeness of the length spectrum}
 Without loss of generality, we may assume that $\Ga\cap \op{int} A^+M\ne 
 \emptyset
 $ for the rest of section. 
Recall the notation $\Ga^\star$ from \eqref{eq.GS} and 
$\hat \la (g)$ from Definition \ref{genJo}. We will need the following: 
\begin{prop}\label{cor.A}
For any $C>1$,
the closed subgroup of $AM$  generated by $\{\hat\la(\ga_0)\in AM: \ga_0\in\Ga^\star, \psi(\la(\ga_0))>C\}$ contains $AM^\circ$.
\end{prop}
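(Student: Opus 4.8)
The plan is to leverage Corollary \ref{jordan}, which asserts $\mathsf s(\Gamma) = AM_\Gamma$, together with Proposition \ref{prop.GR}(2), which gives $AM^\circ \subset b(\Gamma) = \mathsf s(\Gamma)$. So the closed group generated by \emph{all} of $\hat\lambda(\Gamma)$ already contains $AM^\circ$; the only issue is the restriction to those $\gamma_0 \in \Gamma^\star$ with $\psi(\lambda(\gamma_0)) > C$. The key observation is that this truncation does not shrink the generated group, because one can always inflate a given element of the length spectrum by taking powers and products with a fixed loxodromic element of large translation length. Concretely, fix $s_0 = a_0 m_0 \in \Gamma \cap \inte A^+ M$, so $\psi(\log a_0) > 0$ since $\log a_0 \in \inte \mathcal L_\Gamma$ and $\psi \ge \psi_\Gamma$ with $\psi_\Gamma > 0$ on $\inte \mathcal L_\Gamma$. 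Then $\psi(\lambda(s_0^n)) = n\,\psi(\log a_0) \to \infty$.

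The main steps I would carry out: \textbf{(i)} Let $H_C$ denote the closed subgroup of $AM$ generated by $\{\hat\lambda(\gamma_0) : \gamma_0 \in \Gamma^\star,\ \psi(\lambda(\gamma_0)) > C\}$. First show $s_0^n \in \Gamma^\star$ for all large $n$: this is clear since $s_0 \in \inte A^+ M$ already, so $s_0 = \varphi \hat\lambda(s_0)\varphi^{-1}$ with $\varphi = e \in N^+N$, and $\hat\lambda(s_0^n) = (a_0 m_0)^n$. For $n$ large, $\psi(\lambda(s_0^n)) = n\psi(\log a_0) > C$, so $\hat\lambda(s_0^n) \in H_C$; taking $n$ along a suitable progression and using that $H_C$ is a closed group, one gets $\overline{\{(a_0 m_0)^n : n \ge n_0\}} \subset H_C$, which is a closed subgroup of $AM$ containing a one-parameter-like piece in the $A$-direction after projection — in particular it contains the closure of the cyclic group generated by $a_0 m_0$. \textbf{(ii)} For an arbitrary $\gamma \in \Gamma^\star$, consider $s_0^n \gamma$. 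As in the proof of Lemma \ref{lem.spec}, for all large $n$ the element $s_0^n \gamma$ is loxodromic with $y_{s_0^n\gamma} \to e^+ \in N^+e^+$, so $s_0^n\gamma \in \Gamma^\star$ for large $n$, and $\hat\lambda(s_0^n\gamma) = b(s_0^n\gamma, y_{s_0^n\gamma}) \to$ (after the cocycle computation in that proof) behaves like $\hat\lambda(s_0^n)\,\hat\lambda(\gamma)$ up to a correction tending to $e$. Crucially, $\psi(\lambda(s_0^n\gamma))$: since the limit cone lies in $\inte\fa^+\cup\{0\}$ and $\mu(s_0^n\gamma)\to\infty$ regularly by Lemma \ref{Anre}, and $\psi$ is strictly positive on $\inte\mathcal L_\Gamma$, we have $\psi(\lambda(s_0^n\gamma)) \to \infty$; in particular it exceeds $C$ for large $n$. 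Hence $\hat\lambda(s_0^n\gamma) \in H_C$ for all large $n$. \textbf{(iii)} Combining: $\hat\lambda(\gamma) = \lim_{n} \hat\lambda(s_0^n)^{-1}\hat\lambda(s_0^n\gamma)$ (by the cocycle identity, exactly the limit computation in Lemma \ref{lem.spec}), and both factors on the right lie in $H_C$ for large $n$, so $\hat\lambda(\gamma) \in H_C$. Since $\gamma \in \Gamma^\star$ was arbitrary, $H_C \supset \mathsf s(\Gamma) = AM_\Gamma \supset AM^\circ$ by Corollary \ref{jordan} and Proposition \ref{prop.GR}(2).

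The step I expect to be the main obstacle is \textbf{(ii)}: one must verify carefully that $\psi(\lambda(s_0^n\gamma)) \to \infty$, which requires knowing not just that $\mu(s_0^n\gamma)\to\infty$ regularly (Lemma \ref{Anre}) but relating the \emph{Jordan} projection $\lambda(s_0^n\gamma)$ to the Cartan projection. For Anosov $\Gamma$ this follows because $\lambda(\gamma_n)$ and $\mu(\gamma_n)$ stay within bounded distance and $\lambda(\gamma_n)/\|\lambda(\gamma_n)\|$ accumulates in the limit cone $\mathcal L_\Gamma \subset \inte\fa^+\cup\{0\}$, on whose interior $\psi$ is bounded below by a positive multiple of the norm; combined with $\|\lambda(s_0^n\gamma)\| \to \infty$ this gives $\psi(\lambda(s_0^n\gamma))\to\infty$. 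Alternatively — and this is cleaner — one avoids the issue entirely by using that $\hat\lambda(s_0^n\gamma) = b(s_0^n\gamma, y_{s_0^n\gamma})$ lies in $\inte A^+ M$ with $A$-component close to $a_0^n \cdot (\text{$A$-part of }\hat\lambda(\gamma))$ and $\psi$ of that is $n\psi(\log a_0)$ plus a bounded term, hence $> C$ for large $n$. Either way the estimate is elementary once set up, and everything else is a direct reprise of the cocycle manipulations already appearing in Lemmas \ref{lem.Jd} and \ref{lem.spec}.
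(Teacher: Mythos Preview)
Your approach is sound and yields a valid alternative proof, but step (iii) contains a slip: the limit $\lim_n \hat\lambda(s_0^n)^{-1}\hat\lambda(s_0^n\gamma)$ computed in Lemma~\ref{lem.spec} is $b(\gamma,e^+)$, not $\hat\lambda(\gamma)$. Indeed, writing $x_n:=y_{s_0^n\gamma}\to e^+$, one has $\hat\lambda(s_0^n)^{-1}\hat\lambda(s_0^n\gamma)=s_0^{-n}b(s_0^n\gamma,x_n)=b(\gamma,x_n)\to b(\gamma,e^+)$, and in general $b(\gamma,e^+)\ne b(\gamma,y_\gamma)=\hat\lambda(\gamma)$. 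Correspondingly, the hypothesis needed in (ii) is $\gamma e^+\in N^+e^+$ (as in Lemma~\ref{lem.spec}), not $\gamma\in\Gamma^\star$. With these corrections your argument gives $b_{e^+}(\Gamma)\subset H_C$, whence $\mathsf s(\Gamma)=b(\Gamma)\subset H_C$ by Lemmas~\ref{lem.xi} and~\ref{lem.spec}, and the conclusion follows. (A still shorter variant, once one knows $\psi>0$ on $\mathcal L_\Gamma\setminus\{0\}$: for any $\gamma\in\Gamma^\star$ one has $\gamma^n\in\Gamma^\star$, $\hat\lambda(\gamma^n)=\hat\lambda(\gamma)^n$, and $\psi(\lambda(\gamma^n))=n\psi(\lambda(\gamma))>C$ for large $n$; hence $\hat\lambda(\gamma)^n,\hat\lambda(\gamma)^{n+1}\in H_C$ and so $\hat\lambda(\gamma)\in H_C$.) Note also that your justification ``$\log a_0\in\inte\mathcal L_\Gamma$'' is not quite right; what you need is $\psi>0$ on $\mathcal L_\Gamma\setminus\{0\}$, which is the standard fact the paper also invokes without comment.

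The paper takes a genuinely different route. Rather than arguing that the truncated generating set still generates all of $\mathsf s(\Gamma)$, it constructs---via Benoist's $(\Pi,\e)$-Schottky machinery---a Zariski-dense subgroup $\Gamma_\psi<\Gamma$ with $\Gamma_\psi\cap\inte A^+M\ne\emptyset$ and $\psi(\lambda(\gamma))>C$ for every nontrivial $\gamma\in\Gamma_\psi$, and then applies Corollary~\ref{jordan} directly to $\Gamma_\psi$ to obtain $\mathsf s(\Gamma_\psi)\supset AM^\circ$. Your argument is more elementary and self-contained, recycling only the cocycle computations already present in Section~3; the paper's argument is cleaner once the external Schottky input is granted, and produces the auxiliary subgroup $\Gamma_\psi$ as a byproduct.
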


 By Corollary \ref{jordan} applied to $\Gamma_\psi$, this proposition follows from the following lemma.
\begin{lemma}\label{lem.Sch} For any $C>1$,
there exists a Zariski dense subgroup $\Ga_\psi<\Ga$, depending on $C$, such that
 $\Ga_\psi\cap \inte A^+M\ne 
 \emptyset
 $ and 
 $$\psi(\la(\ga))>C\quad\text{ for all $\ga\in\Ga_\psi-\{e\}$.}$$
 In particular, $\hat \la (\Ga_\psi^\star) \subset \{\hat\la(\ga_0)\in AM: \ga_0\in\Ga^\star, \psi(\la(\ga_0))>C\}$.
\end{lemma}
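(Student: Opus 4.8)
The plan is to construct $\Ga_\psi$ as a Zariski dense Schottky subgroup generated by very high powers of suitably chosen loxodromic elements of $\Ga$, arranged so that $\inte A^+M$ still meets $\Ga_\psi$. First I would recall that $\Ga$, being Zariski dense, contains loxodromic elements, and in fact a Zariski dense free subgroup generated by loxodromic elements in general position, by the ping-pong construction of Benoist \cite{Ben} (this is exactly the mechanism behind the Zariski density of the loxodromic set cited right before Definition \ref{genJo}). Fix such a free subgroup $F=\langle \ga_1,\dots,\ga_r\rangle<\Ga$, with each $\ga_i$ loxodromic and with the attracting/repelling data of the $\ga_i$ pairwise in general position, so that ping-pong applies to high powers. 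Since the hypothesis $\Ga\cap\inte A^+M\ne\emptyset$ is available, I would also arrange (conjugating inside $F$, or choosing the free generators appropriately) that one of the generators, say $\ga_1$, can be taken in $\inte A^+M$ itself, or at least that some word lies there; alternatively, pass to the conjugate first so that $\ga_1\in\inte A^+M$ and then run ping-pong with the other generators chosen so that $\ga_1^\pm=e^\pm$ stay in general position with their fixed points.

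Next, for a parameter $q\in\N$ consider $\Ga_\psi:=\langle \ga_1^q,\dots,\ga_r^q\rangle$. Standard ping-pong shows that for $q$ large this is still free of rank $r$, hence Zariski dense (it contains $\ga_1^q\in\inte A^+M$, forcing meeting $\inte A^+M$, and Zariski density of a free group generated by generic loxodromics is stable under taking powers). The crucial point is the lower bound on Jordan projections. For a reduced word $w=\ga_{i_1}^{q\epsilon_1}\cdots\ga_{i_\ell}^{q\epsilon_\ell}\in\Ga_\psi-\{e\}$, ping-pong geometry on the Furstenberg boundary gives that the Cartan projection, and then (by the standard comparison of Cartan and Jordan projections for elements with large regular Cartan projection, using that $\Ga$ Anosov forces all relevant limit data into $\inte\fa^+$, cf. Lemma \ref{Anre}) the Jordan projection $\la(w)$ satisfies $\mu(w),\la(w)\to\infty$ regularly as $q\to\infty$, uniformly over all nontrivial $w$; quantitatively $\la(w)$ grows at least linearly in $q\ell\ge q$ inside a fixed open subcone of $\inte\fa^+$. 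Since $\psi\in\dg$ satisfies $\psi\ge\psi_\Ga$ and $\psi$ is a positive linear form on $\inte\L_\Ga\subset\inte\fa^+$ (so $\psi$ is bounded below by a positive multiple of $\|\cdot\|$ on the relevant cone), choosing $q=q(C)$ large enough yields $\psi(\la(\ga))>C$ for every $\ga\in\Ga_\psi-\{e\}$.

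Finally, the "in particular" clause is immediate: for $\ga\in\Ga_\psi^\star\subset\Ga^\star$ one has by definition $\ga\in\varphi(\inte A^+M)\varphi^{-1}$ with $\varphi\in N^+N$, so $\hat\la(\ga)$ is defined and $\psi(\la(\ga))>C$, placing $\hat\la(\ga)$ in the indicated set. Then Proposition \ref{cor.A} follows by applying Corollary \ref{jordan} to the subgroup $\Ga_\psi$: we have $\mathsf s(\Ga_\psi)=AM_{\Ga_\psi}\supset AM^\circ$ (the inclusion $AM^\circ\subset AM_{\Ga_\psi}$ holds for any Zariski dense subgroup by Proposition \ref{prop.GR}(2)-(3)), and $\mathsf s(\Ga_\psi)$ is by definition the closed group generated by $\hat\la(\Ga_\psi^\star)$, which sits inside the closed group generated by $\{\hat\la(\ga_0):\ga_0\in\Ga^\star,\ \psi(\la(\ga_0))>C\}$.

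**Main obstacle.** The delicate part is the uniform-in-$q$, uniform-over-all-words lower bound on the Jordan projection $\la(w)$ lying in a fixed subcone of $\inte\fa^+$ with norm $\gtrsim q$: one must run the ping-pong estimates on $\F=G/P$ (or on the flag variety with its contraction properties) carefully enough to control not just $\|\mu(w)\|$ but the direction of $\mu(w)$, and then invoke the Anosov/regularity input (limit cone in $\inte\fa^+\cup\{0\}$, Lemma \ref{Anre}) to transfer this from Cartan to Jordan projections for all nontrivial elements simultaneously, rather than just along a sequence. Ensuring simultaneously that $\Ga_\psi$ remains Zariski dense and still meets $\inte A^+M$ is routine by contrast, since both are inherited from the chosen generators.
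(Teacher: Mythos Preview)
Your overall strategy matches the paper's exactly: take a Zariski dense $(\Pi,\e)$-Schottky subgroup $\langle s_1,s_2\rangle<\Ga$ with $s_1\in\inte A^+M$ (this is \cite[Lem.~4.3(b)]{Ben}, not just generic ping-pong), and set $\Ga_\psi=\langle s_1^{m_0},s_2^{m_0}\rangle$ for a large $m_0$. The ``in particular'' clause and the deduction of Proposition~\ref{cor.A} via Corollary~\ref{jordan} are exactly as in the paper.

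The place you yourself flag as the main obstacle is where your argument has a gap. You assert that ping-pong together with Lemma~\ref{Anre} gives uniform linear growth of $\la(w)$ inside a fixed subcone of $\inte\fa^+$, but Lemma~\ref{Anre} is a qualitative statement about Cartan projections along a single sequence going to infinity; it does not yield a uniform quantitative lower bound on Jordan projections of \emph{all} reduced Schottky words simultaneously, nor does it control directions. The Cartan-to-Jordan transfer you propose is likewise an asymptotic, not a uniform estimate. The paper bypasses all of this by quoting Benoist's additivity estimate for $(\Pi,\e)$-Schottky groups \cite[Lem.~4.1]{Ben}: there is $R=R(\e)$, independent of $m$ and of the word, such that for every cyclically reduced $w=g_1^{n_1}\cdots g_\ell^{n_\ell}$ with $g_i\in\{s_1^{\pm m},s_2^{\pm m}\}$,
\[
\Bigl\|\la(w)-\sum_{i=1}^\ell n_i\la(g_i)\Bigr\|\le \ell R.
\]
Since $\la(s_j^{\pm m})=m\,\la(s_j^{\pm1})$ and $\psi(\la(s_j^{\pm1}))>0$ (each $\la(s_j^{\pm1})\in\L_\Ga$ and $\psi>0$ on $\L_\Ga\setminus\{0\}$), one picks $m_0$ with $\psi(\la(s_j^{\pm m_0}))>\|\psi\|R+C$ and computes directly
\[
\psi(\la(w))\ge \sum_i n_i\,\psi(\la(g_i))-\|\psi\|\,\ell R\ge \sum_i n_i\bigl(\psi(\la(g_i))-\|\psi\|R\bigr)>C.
\]
No cone argument, no Anosov input, and no Cartan projection appears. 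This is the missing ingredient in your sketch.
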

\begin{proof}
Recall that $\Pi$ is the set of all simple roots of $\frak g$ with respect to $\fa^+$.
By \cite[Lem.~4.3(b)]{Ben}, there exist $\e>0$ and 
$\{s_1,s_2\}\subset\Ga$
such that $s_1\in\op{int}A^+M
$, and for each $m\ge1$, 
$s_1^m, s_2^m$
are $(\Pi, \e)$-Schottky generators and
the subgroup 
$\Ga_m=\langle s_1^m,s_2^m \rangle$
is a Zariski-dense $(\Pi,\e)$-Schottky subgroup of $\Ga$  (see \cite[Def. 4.1]{Ben} for terminologies). 

Fix $m> 1$ and let $z\in \lambda(\Gamma_m)-\{0\}$. Then $z=\la (w)$ for some $w=g_1^{n_1}\cdots g_\ell^{n_\ell}$ with
$g_i\in \{s_1^{\pm m}, s_2^{\pm m}\}$, $n_i\in\bb N$, $g_i\neq g_{i+1}^{-1} (i=1,\cdots,\ell)$ where we interpret $g_{\ell+1}:=g_1$; this is because 
 every element of a $(\Pi,\e)$-Schottky group  is conjugate to a word of such form.
By \cite[Lem. 4.1]{Ben}, there exists $R=R(\e)>0$ (independent of $w\in \Ga_1)$ such that
$$\norm{\la(w)-\sum_{i=1}^\ell n_i\la(g_i)}\leq \ell R.$$

Since $\psi(\la(s_j^{\pm1}))>0$ and $\la(s_j^{\pm m})=m\la(s_j^{\pm1})$,
we can choose $m_0\in \mathbb N$ such that 
$$\psi(\la( s_j^{\pm m_0})) > \norm{\psi}R+ C\quad\text{for each $j=1,2$}.$$ 
Set $$\Ga_\psi:=\Ga_{m_0}.$$ 
Then for any $z=\la(w)\in \la(\Gamma_\psi)-\{0\}$ as above, 
\begin{align*}
\psi(z)&\geq \sum_{i=1}^\ell n_i \psi(\la(g_i)) -\norm{\psi} \ell R  \ge  \sum_{i=1}^\ell n_i\bigg(\psi(\la(g_i)) -\norm{\psi}R\bigg)> C.
\end{align*}

The lemma follows.
\end{proof}

\subsection{Proof of Main proposition}
Recall the $\fa$-valued Gromov product on $\La^{(2)}$: for any $\xi\ne\eta$ in $\La$,
$$\cal G(\xi,\eta): = \log \beta^A_{h^+}(e,h)+\op i \log \beta^A_{h^-}(e,h)$$
 for $h\in G$ satisfying that $h^+=\xi$ and $h^-=\eta$.
 For any fixed $p=g(o)\in G/K$, the following
$$d_{\psi, p}(\xi, \eta):=e^{-\psi(\cal G(g^{-1}\xi, g^{-1}\eta))}  \quad\text{for any $\xi\ne\eta$ in $\La$}$$  defines a virtual visual metric on $\La$, satisfying a weak version of triangle inequality \cite[Lem. 6.11] {LO}. For $\xi\in \La$ and $r>0$, set
$$
\bb B_p(\xi,r):=\{\eta \in \La : d_{\psi,p}(\xi,\eta)<r\}.
$$


We recall the following two lemmas:
\begin{lemma}\cite[Lem. 6.12]{LO} \label{inc} 
There exists $N_0(\psi, p)\ge 1$ satisfying the following:
for any finite collection  $\bb B_p (\xi_1, r_1), \cdots, \bb B_p (\xi_n, r_n)$ with $\xi_i\in \La$ and $r_i>0$, there exists a disjoint subcollection $\bb B_p (\xi_{i_1}, r_{i_1}),
\cdots , \bb B_p (\xi_{i_\ell}, r_{i_\ell})$ such that
$$\bb B_p (\xi_1, r_1)\cup \cdots \cup \bb B_p (\xi_n, r_n)\subset
\bb B_p (\xi_{i_1}, 3N_0(\psi,p) r_{i_1})\cup
\cdots \cup \bb B_p (\xi_{i_\ell}, 3N_0(\psi, p) r_{i_\ell}).$$
Moreover, $N_0(\psi, p)$ can be taken uniformly for all $p$ in a fixed compact subset of $G/K$.
\end{lemma}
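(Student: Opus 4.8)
The plan is to run the classical greedy (Vitali-type) selection and then absorb each discarded ball into a fixed dilate of a selected one by means of the weak triangle inequality for $d_{\psi,p}$ supplied by \cite[Lem. 6.11]{LO}. Write $\delta:=d_{\psi,p}$. By \cite[Lem. 6.11]{LO} there is a constant $c=c(\psi,p)\ge 1$, which can be chosen uniformly for $p$ in any fixed compact subset of $G/K$, such that $\delta$ is symmetric up to the multiplicative error $c$ and satisfies the weak triangle inequality with constant $c$; concretely,
$$
\delta(\xi,\eta)\le c\,\delta(\eta,\xi)\qquad\text{and}\qquad \delta(\xi,\eta)\le c\bigl(\delta(\xi,\zeta)+\delta(\zeta,\eta)\bigr)\qquad\text{for all }\xi,\eta,\zeta\in\La .
$$
I will prove the statement with $N_0(\psi,p):=c(\psi,p)^{3}$, so that the asserted uniformity over compact sets of $p$ is inherited from that of $c(\psi,p)$.

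First I would carry out the greedy selection. After relabeling the balls, assume $r_1\ge r_2\ge\cdots\ge r_n$. Process the indices $1,2,\ldots,n$ in this order, maintaining a set $S$ of selected indices: start with $S=\emptyset$, and when index $m$ is reached, adjoin $m$ to $S$ provided $\bb B_p(\xi_m,r_m)$ is disjoint from $\bb B_p(\xi_j,r_j)$ for every $j$ currently in $S$. Let $S=\{i_1<\cdots<i_\ell\}$ be the final set; by construction the balls $\bb B_p(\xi_{i_1},r_{i_1}),\ldots,\bb B_p(\xi_{i_\ell},r_{i_\ell})$ are pairwise disjoint, and the procedure terminates because the collection is finite.

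Next I would check the covering inclusion. Fix $k\in\{1,\ldots,n\}$; if $k\in S$ there is nothing to show, so assume $k\notin S$. Then when $k$ was processed it failed the disjointness test, so there is $i_t\in S$ with $i_t<k$ — hence $r_{i_t}\ge r_k$ by the ordering — and a point $\zeta\in\bb B_p(\xi_{i_t},r_{i_t})\cap\bb B_p(\xi_k,r_k)$. For any $\eta\in\bb B_p(\xi_k,r_k)$, applying the weak triangle inequality with midpoint $\zeta$, then again with midpoint $\xi_k$, and using quasi-symmetry to replace $\delta(\zeta,\xi_k)$ by $\le c\,\delta(\xi_k,\zeta)<c\,r_k$, we get
$$
\delta(\xi_{i_t},\eta)\le c\,\delta(\xi_{i_t},\zeta)+c^{2}\,\delta(\zeta,\xi_k)+c^{2}\,\delta(\xi_k,\eta)< c\,r_{i_t}+c^{3}r_k+c^{2}r_k\le 3c^{3}r_{i_t},
$$
where the last step uses $r_k\le r_{i_t}$ and $c\le c^{2}\le c^{3}$ (valid since $c\ge 1$). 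Thus $\eta\in\bb B_p(\xi_{i_t},3N_0(\psi,p)r_{i_t})$, so $\bb B_p(\xi_k,r_k)\subset\bb B_p(\xi_{i_t},3N_0(\psi,p)r_{i_t})$; taking the union over $k$ yields the claimed inclusion, and the uniformity of $N_0(\psi,p)$ over compact $p$-sets is immediate from that of $c(\psi,p)$.

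I do not expect a genuine obstacle here: this is the classical Vitali covering argument transported to a quasi-metric space, and the only points deserving care are (i) processing the balls in order of decreasing radius, so that the selected ball absorbing $\bb B_p(\xi_k,r_k)$ has radius at least $r_k$, and (ii) the bookkeeping of the quasi-metric constants, which is the reason a fixed power of $c(\psi,p)$ appears in the final dilation factor. (If \cite[Lem. 6.11]{LO} furnishes an exactly symmetric $d_{\psi,p}$, the quasi-symmetry step is unnecessary and one may take $N_0(\psi,p)=c(\psi,p)^{2}$; if instead the ``weak triangle inequality'' there is of the relaxed-ultrametric type $\delta(\xi,\eta)\le c\max(\delta(\xi,\zeta),\delta(\zeta,\eta))$, the identical scheme applies with $N_0(\psi,p)$ taken to be the corresponding product of constants.)
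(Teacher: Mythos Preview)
The paper does not give its own proof of this lemma; it is quoted verbatim from \cite[Lem.~6.12]{LO} with no argument supplied. Your proposal is the standard Vitali greedy-selection argument carried out in the quasi-metric $d_{\psi,p}$, and it is correct: ordering by decreasing radius guarantees the absorbing selected ball has the larger radius, and your bookkeeping of the quasi-symmetry and weak triangle constants from \cite[Lem.~6.11]{LO} is accurate, yielding $N_0(\psi,p)=c(\psi,p)^3$ with the required uniformity over compact sets of $p$. This is exactly the proof one would expect behind the cited reference, so there is nothing to contrast.
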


\begin{lem}\label{comp3}\cite[Lem. 10.6]{LO}. There exists a compact subset $\cal C\subset G$ such that
for any $\xi\in \La$, there exists $g\in \cal C$ such that $g^+=\xi$ and $g^-\in \La$.
\end{lem}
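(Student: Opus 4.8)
The plan is to reduce the statement to producing a \emph{compact} subset $\cal K$ of $(\La\times\La)\cap\cal F^{(2)}$ whose first–coordinate projection already exhausts $\La$, and then to lift $\cal K$ to a compact subset of $G$. The subtlety is that $(\La\times\La)\cap\cal F^{(2)}$ is itself not compact, since its closure meets the diagonal; but a uniform lower bound on how far one may move inside $\La$ produces such a $\cal K$. Concretely, fix any metric $\rho$ on the compact space $\cal F$ compatible with its topology. Since $\Ga$ is Zariski dense it fixes no point of $\cal F$, so $\La$ is a compact subset of $\cal F$ with at least two points; hence $\xi\mapsto\max_{\eta\in\La}\rho(\xi,\eta)$ is continuous and everywhere positive on $\La$, and therefore bounded below by some $\delta>0$. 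Set $\cal K:=\{(\xi,\eta)\in\La\times\La:\rho(\xi,\eta)\ge\delta\}$. Then $\cal K$ is closed in $\La\times\La$, hence compact; it is contained in $\cal F^{(2)}$ by the antipodality property \eqref{ant2}, since points of $\La$ at distance $\ge\delta$ are distinct; and the first projection maps $\cal K$ onto $\La$, because for each $\xi\in\La$ there is $\eta\in\La$ with $\rho(\xi,\eta)\ge\delta$.

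Next I would lift $\cal K$ to $G$. The group $G$ acts transitively on $\cal F^{(2)}=G(e^+,e^-)$, and a short computation identifies the stabilizer of $(e^+,e^-)$ with $MA$; hence the orbit map
$$
q:G\to\cal F^{(2)},\qquad q(g)=(g^+,g^-),
$$
identifies $\cal F^{(2)}$ with $G/MA$, so $q$ is a locally trivial fiber bundle and admits continuous local sections. Cover the compact set $\cal K$ by finitely many open subsets $U_1,\dots,U_r$ of $\cal F^{(2)}$ over which $q$ has continuous sections $s_i:U_i\to G$, choose compact sets $L_i\subset U_i$ with $\cal K\subset L_1\cup\dots\cup L_r$, and put $\cal C:=\bigcup_{i=1}^{r} s_i(L_i\cap\cal K)$. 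Each $s_i(L_i\cap\cal K)$ is a continuous image of a compact set, so $\cal C$ is compact, and $q(\cal C)\supset\cal K$. Finally, given $\xi\in\La$, pick $\eta\in\La$ with $(\xi,\eta)\in\cal K$; then there is $g\in\cal C$ with $(g^+,g^-)=q(g)=(\xi,\eta)$, so $g^+=\xi$ and $g^-=\eta\in\La$, as required.

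The only input specific to Anosov subgroups is the antipodality \eqref{ant2}, which is precisely what forces $\cal K\subset\cal F^{(2)}$; everything else is point–set topology together with the homogeneous–bundle structure of $G\to G/MA$. So I do not anticipate a genuine obstacle. The one point to handle carefully is that one should not hope for a single continuous global section of $q$ — the principal $MA$-bundle $G\to G/MA$ need not be trivial over all of $\cal F^{(2)}$ — which is why the argument patches finitely many local sections over the compact set $\cal K$ rather than working globally.
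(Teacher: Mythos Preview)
Your argument is correct. The paper does not actually prove this lemma here; it simply cites \cite[Lem.~10.6]{LO}, so there is no in-paper proof to compare against. Your approach---extracting a compact $\cal K\subset\La^{(2)}$ that surjects onto $\La$ via the diameter lower bound, then lifting through finitely many local sections of the principal $MA$-bundle $G\to G/MA\simeq\cal F^{(2)}$---is clean and self-contained, and the only Anosov-specific input (antipodality \eqref{ant2}) is exactly what is needed to ensure $\cal K\subset\cal F^{(2)}$.
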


We set $$N_0:=\max_{p\in \cal C (o)} N_0(\psi, p)<\infty$$ with $N_0(\psi, p)$ and $\cal C$
given by Lemmas \ref{inc} and \ref{comp3} respectively.

\medskip

\begin{proposition}[Main Proposition] \label{prop.E} For all $\ga_0\in\Ga^\star$ satisfying $\psi(\la(\ga_0))>\log 3N_0+1$,
we have $\hat\la(\ga_0)\in\mathsf E_{\nu_\psi}$. 
 \end{proposition}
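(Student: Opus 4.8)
The plan is to fix $\gamma_0\in\Gamma^\star$ with $\psi(\lambda(\gamma_0))>\log 3N_0+1$ and verify the essential-value condition directly: given a Borel set $B\subset N^+e^+$ with $\nu_\psi(B)>0$ and $\e>0$, I must produce $\gamma\in\Gamma$ with $\nu_\psi\{\xi\in B\cap\gamma^{-1}B:\beta^{AM}_\xi(\gamma^{-1},e)\in\hat\lambda(\gamma_0)(AM)_\e\}>0$. The natural candidate for $\gamma$ is a high conjugate-type iterate associated to $\gamma_0$: writing $\gamma_0=\varphi\hat\lambda(\gamma_0)\varphi^{-1}$ with $\varphi\in N^+N$ and $y_{\gamma_0}=\varphi^+\in N^+e^+$, the attracting/repelling dynamics of (conjugates of) $\gamma_0^k$ on $\mathcal F$ will contract large regions of $\La$ toward $y_{\gamma_0}$ while the Bruhat/Iwasawa cocycle along the orbit is forced, by Lemma \ref{lem.Jd} and the cocycle identity, to be close to $\hat\lambda(\gamma_0)^k$ up to bounded error — and then an equidistribution/density step recovers a single copy of $\hat\lambda(\gamma_0)$ rather than a power. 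More precisely I expect to use the group $\mathsf s(\Gamma)=AM_\Gamma$ structure only at the very end; the core of the argument is a covering/Vitali argument on $(\La,\nu_\psi)$ with the virtual visual metric $d_{\psi,p}$.

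The key steps, in order: (1) Use Lemma \ref{comp3} to pick $g\in\mathcal C$ with $g^+=y_{\gamma_0}$ (or a nearby point of $\La\cap N^+e^+$) and $g^-\in\La$, so that the relevant balls $\bb B_p(\cdot,\cdot)$ are controlled by $N_0$; by the Lebesgue density theorem for the doubling-type metric $d_{\psi,p}$ (this is where Lemma \ref{inc} enters), almost every point of $B$ is a density point, so $B$ contains a set of positive $\nu_\psi$-measure concentrated in a small ball $\bb B_p(\zeta_0,\rho)$. (2) For $\gamma$ a suitable element with attracting point near $\zeta_0$-image and repelling point chosen transverse to the relevant region — built by multiplying $\gamma_0$-conjugates by Schottky-type elements as in Lemma \ref{lem.Sch}, or simply by taking $\gamma=\varphi s^n\hat\lambda(\gamma_0)s^{-n}\varphi^{-1}$-type words — the map $\xi\mapsto\gamma^{-1}\xi$ maps a large portion of $\La$ into a tiny ball; the ratio $\psi(\lambda(\gamma_0))>\log 3N_0+1$ is exactly the gap needed so that the contraction beats the distortion constant $3N_0$ from Lemma \ref{inc}, guaranteeing $\nu_\psi(B\cap\gamma^{-1}B)>0$ with quantitative control. (3) Along this orbit, use Lemma \ref{lem.Jd} ($\hat\lambda(\gamma_0)=\beta^{AM}_{y_{\gamma_0}}(e,\gamma_0)$), the cocycle identity, and the equivariance of $\beta^{AM}$ together with the equi-continuity Proposition \ref{lem.diam} (applied to the contracting sequence $\gamma^{-1}\xi\to g^+$, $\gamma^{-1}g(o)\to g^-$) to conclude that for $\xi$ in the surviving positive-measure set, $\beta^{AM}_\xi(\gamma^{-1},e)$ lies within $(AM)_\e$ of $\hat\lambda(\gamma_0)$ times a bounded correction that can be absorbed by choosing the combinatorics of $\gamma$ appropriately — here Remark \ref{rmk.WD} and Lemma \ref{lem.M} give the needed continuity of the $AM$-cocycle near $g^+$.

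The main obstacle I anticipate is Step (3): extracting a single essential value $\hat\lambda(\gamma_0)$ rather than $\hat\lambda(\gamma_0)^n$ or some product, i.e.\ pinning the $AM$-component of the cocycle precisely. Multiplying the repelling data alone does not change the $AM$-cocycle in a controlled way, so the correct $\gamma$ is likely of the form $\gamma=\varphi'\gamma_0\varphi'^{-1}$ for a conjugator $\varphi'\in N^+N$ chosen (by the density of $\{\gamma^+:\gamma\in\Gamma\cap N^+P\}$ in $\La$ and the open-orbit property) so that $\varphi'^+$ is near a density point of $B$ while keeping the cocycle $b(\gamma,\varphi'^+)=\hat\lambda(\gamma_0)$ by the conjugation-invariance in Lemma \ref{lem.Jd}; the equi-continuity of $\beta^{AM}$ (Proposition \ref{lem.diam}) then spreads this exact value over a ball of positive measure. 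Reconciling "the conjugator must be near a fixed density point" with "the contraction must still be strong enough ($\psi(\lambda(\gamma_0))>\log 3N_0+1$)" is the delicate balance, and I expect the constant $\log 3N_0+1$ to come out precisely from forcing $3N_0\cdot e^{-\psi(\lambda(\gamma_0))}<1$ in the Vitali covering estimate of Step (2).
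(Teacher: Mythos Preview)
Your outline has the right shape --- Vitali covering with Lemma~\ref{inc}, using a conjugate of $\gamma_0$ as the group element in the essential-value test, and spreading the cocycle value over a ball via the equi-continuity Proposition~\ref{lem.diam} --- and you correctly anticipate that the constant $\log 3N_0+1$ arises from making the contraction $\gamma\gamma_0\gamma^{-1}D\subset D$ beat the Vitali distortion. But there is a genuine gap exactly where you flag trouble in Step~(3), and your proposed fixes (Schottky words, density of $\{\gamma^+\}$ in $\La$) do not close it.

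The problem is the $M$-component. If you conjugate $\gamma_0$ by $\gamma\in\Gamma$ so that the attracting point $\gamma y_{\gamma_0}$ lands near a density point of $B$, the cocycle at $\gamma\xi_0$ works out (see the end of the proof of Lemma~\ref{lem.ge}) to
\[
\beta^{AM}_{\gamma\xi_0}(e,\gamma\gamma_0\gamma^{-1})=\beta^M_{\xi_0}(\gamma^{-1},e)\,\hat\lambda(\gamma_0)\,\beta^M_{\xi_0}(\gamma^{-1},e)^{-1},
\]
which is an $M$-conjugate of $\hat\lambda(\gamma_0)$, not $\hat\lambda(\gamma_0)$ itself. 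To land in $\hat\lambda(\gamma_0)(AM)_\e$ you need $\beta^M_{\xi_0}(\gamma^{-1},e)$ close to $e$, and neither density of $\{\gamma^+\}$ in $\La$ nor any Schottky construction gives you control of this $M$-factor. The paper's solution is to work only at \emph{strong Myrberg} limit points $\xi\in\spa$ (Definition~\eqref{My}): by Lemma~\ref{lem.C2}, for such $\xi$ there exist $\gamma_i\in\Gamma$ with $\gamma_i^{-1}\xi\to\xi_0$, $\gamma_i^{-1}p\to\eta$, \emph{and} $\beta^M_\xi(\gamma_i,e)\to e$. That last condition is precisely what forces the conjugating $M$-element to be trivial in the limit. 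Establishing $\nu_\psi(\spa)=1$ (Corollary~\ref{full}) is not elementary: it requires the $\br$-ergodicity of $\hat m_\psi$ on $\Gamma\backslash\La^{(2)}\times\br\times M$ (Proposition~\ref{lem.AI}), which in turn rests on the $A$-ergodic decomposition of $m^{\BMS}_\psi$ (Theorem~\ref{thm.AE}). So the missing ingredient in your plan is this whole Myrberg-point apparatus; once you have it, Lemma~\ref{lem.ge} packages Steps~(2) and~(3) together, and Lemma~\ref{lem.sh2} is the density theorem tailored to the resulting family $\mathcal B_R(\gamma_0,\e)$.
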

 
 \subsection{ Proof of Theorem \ref{main}(1)}\label{thm1}
 By Propositions \ref{cor.A} and \ref{prop.E},
 $\mathsf E_{\nu_\psi}$ contains $AM^\circ$. Therefore
Theorem \ref{main}(1) follows from
 Proposition \ref{lem.in}.
 
 
 \medskip
The rest of the section is devoted to the proof of Proposition \ref{prop.E}.

\subsection*{Definition of $\cal B_R(\ga_0,\e)$}

We now fix $\e>0$ as well as an element $\ga_0\in\Ga^\star$ such that
$$
\psi(\la(\ga_0))>\log 3N_0+1.
$$
Note that $y_{\ga\ga_0^{\pm 1}\ga^{-1}}=\ga y_{\ga_0^{\pm 1} }$
for all $\ga\in\Ga$
. We can choose $g\in \cal C$ such that $g^+=y_{\ga_0}$ and $g^-\in\La$.
Note that $g^+\in N^+e^+$, as $\ga_0\in\Ga^\star$.
Set 
$$p:=g(o),\;\; \eta:=g^-, \text{ and } \xi_0:=g^+.$$

For any $\xi\in\La-\{\eta,e^-\}$, we claim that there is $R_\e=R_\e(\xi)>0$ such that
$$
\beta_{\xi'}^{AM}(g,e)\in \beta_{\xi}^{AM}(g,e)(AM)_\e $$
for all $\xi'\in\bb B_p(\xi,e^{\psi(\la(\ga_0)+\la(\ga_0^{-1}))+2\norm{\psi}\e} R_\e).$
Indeed, since $e^-\notin \{\xi, g^{-1}\xi\}$,
we have $\xi, g^{-1}\xi\in N^+e^+$ by Lemma \ref{ant}.
The claim follows as the map $\xi'\mapsto\beta_{\xi'}^{AM}(g,e)$ is continuous at $\xi$.

 By \cite[Lem. 6.11]{LO}, 
the family $\{\bb B_p(\xi,r):\xi\in \La, r>0\}$ forms a basis of topology 
in $\La$.
For $\ga\in \Ga$, let $r_g(\ga)$ be the supremum of $r\geq 0$ such that for all $\xi\in\bb B_p(\ga \xi_0,3N_0r)$,
$\beta_\xi^{AM}(g,\ga\ga_0\ga^{-1}g)$ is well-defined and
\be\label{diff}
\beta_{\xi}^{AM}(g,\ga\ga_0\ga^{-1}g)\in\beta_{\ga \xi_0}^{AM}(g,\ga\ga_0\ga^{-1}g)(AM)_\e.
\ee
If $\ga\xi_0\not\in\{e^-, g^-\}$ and hence $\ga \xi_0, g^{-1}\ga\xi_0\in  N^+e^+$,
then $r_g(\ga)>0$.

For each $R>0$, we define the family of virtual balls as follows:
$$\cal B_R(\ga_0,\e)=\{ \bb B_p(\ga \xi_0,r) : \ga\in\Ga, 0<r<\min (R, r_g(\ga)) \}.
$$

We remark that the difference of the definition of  $\cal B_R$ in this paper and our previous paper \cite{LO} lies
in the definition of $r_g(\gamma)$; in \cite{LO}, we used the $A$-valued Busemann function in \eqref{diff} whereas $r_g(\gamma)$ is defined in terms of the $AM$-valued Busemann function here.

\begin{thm}\cite[Thm.~5.3]{LO} \label{concave}
There exists $C=C(\psi,p)>0$ such that for all $\ga\in\Ga$ and $\xi \in \La$,
$$
-\psi(\underline a (p, \ga p))-C
 \le  \psi(\log \beta^A_\xi(\ga p, p))\leq  \psi(\underline a (\ga p,  p))+C.
$$
where $\underline a(p,q):=\mu(g^{-1}h )$ for $p=g(o)$ and $q=h(o)$.
\end{thm}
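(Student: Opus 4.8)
The plan is to transport the inequality, via equivariance, into a comparison between the $A$-valued Iwasawa cocycle and the Cartan projection, and then to run a Kostant-convexity argument refined by the Anosov regularity of $\Gamma$ and the antipodality of $\Lambda$. Write $p=g(o)$. Using the right $K$-invariance of $\beta^A$ and of $\mu$, the equivariance of $\beta^A$, and the identity $\beta^A_\zeta(h',e)=\sigma^A(h'^{-1},\zeta)$, one obtains
$$
\log\beta^A_\xi(\gamma p,p)=\log\sigma^A\!\left(g^{-1}\gamma^{-1}g,\ g^{-1}\xi\right);
$$
setting $h:=g^{-1}\gamma^{-1}g$ and $\eta:=g^{-1}\xi\in g^{-1}\Lambda$, we also have $\underline a(\gamma p,p)=\mu(h)$ and $\underline a(p,\gamma p)=\mu(h^{-1})=\i\mu(h)$. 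Since $\psi(\i\mu(h))=-\psi(w_0\mu(h))$, Theorem \ref{concave} becomes equivalent to the two-sided estimate
$$
\psi(w_0\mu(h))-C\ \le\ \psi\big(\log\sigma^A(h,\eta)\big)\ \le\ \psi(\mu(h))+C
$$
to be proved for all $\gamma\in\Gamma$ (with $h=g^{-1}\gamma^{-1}g$) and all $\eta\in g^{-1}\Lambda$.

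For the soft part I would invoke the linear convexity theorem (in the form of Kostant): the Iwasawa $\fa$-component of any $x\in G$ lies in $\op{conv}(W\!\cdot\!\mu(x))$. Applying this to $x=hk_\eta$, where $\eta=k_\eta^{+}$, and using $\mu(hk_\eta)=\mu(h)$, gives $\log\sigma^A(h,\eta)\in\op{conv}(W\mu(h))$, hence $\min_{w\in W}\psi(w\mu(h))\le\psi(\log\sigma^A(h,\eta))\le\max_{w\in W}\psi(w\mu(h))$. If $\psi$ is dominant these are exactly the bounds sought, with $C=0$, since $\psi$ attains its extrema over $\op{conv}(W\mu(h))$ at $\mu(h)$ and $w_0\mu(h)$; moreover, when $\gamma$ ranges over a fixed finite subset of $\Gamma$ all four quantities are bounded in terms of $\psi$ and $p$, so the estimate is trivial there. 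It therefore suffices to treat a general, possibly non-dominant $\psi\in\dg$ with $\gamma\to\infty$, and here I would use that $\Gamma$ is Anosov: by Lemma \ref{Anre} together with the containment $\mathcal L_\Gamma\subset\inte\fa^{+}\cup\{0\}$, the Cartan projection $\mu(h)=\mu(g^{-1}\gamma^{-1}g)$ tends to infinity regularly, so $\mu(h)/\|\mu(h)\|$ stays in a fixed compact subset of $\inte\fa^{+}$.

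The core step is then a quantitative ``Iwasawa $\simeq$ Cartan in generic position'' estimate: writing $h=k_1\exp(\mu(h))k_2$, one has $\|\log\sigma^A(h,\eta)-\mu(h)\|\le c(\delta)$ provided $\mu(h)$ is sufficiently regular and $\eta$ lies at distance $\ge\delta$ from the Cartan-repelling flag $\kappa_1(h^{-1})^{+}=\kappa_1(g^{-1}\gamma g)^{+}$ of $h$ --- because $\exp(\mu(h))$ uniformly contracts $N$ while generic position keeps $k_2k_\eta$ out of the large Bruhat cell; combined with the previous bounds this pins the extremes at $\psi(\mu(h))$ and $\psi(w_0\mu(h))$. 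This is exactly where the hypothesis $\xi\in\Lambda$ enters: $\eta\in g^{-1}\Lambda$, and as $\gamma\to\infty$ the repelling flag $\kappa_1(g^{-1}\gamma g)^{+}$ accumulates on $g^{-1}\Lambda$, so by the antipodality \eqref{ant2} of $\Lambda$ (cf.\ Lemma \ref{ant}) the two flags are in general position whenever they are distinct. The main obstacle is that this generic-position input is not uniform in $\gamma$: the repelling flags of $g^{-1}\gamma^{-1}g$ become dense in $g^{-1}\Lambda$, so for any fixed $\delta$ some $\gamma$ will violate the $\delta$-gap. The resolution --- the technical heart of the proof --- is that in that regime $\log\sigma^A(h,\eta)$ still stays $\psi$-trapped between $\mu(h)$ and $w_0\mu(h)$ up to $O(1)$: the deficit $\psi(\mu(h))-\psi(\log\sigma^A(h,\eta))$, and symmetrically $\psi(\log\sigma^A(h,\eta))-\psi(w_0\mu(h))$, is controlled by the $\fa$-valued Gromov product $\mathcal G(\eta,\kappa_1(g^{-1}\gamma g)^{+})$, which by the uniform antipodality (equivalently, the compactness) of $\Lambda$ is bounded below by a constant depending only on $p$. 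Assembling the two regimes, with $C$ absorbing $\|\psi\|$, the diameter of $g^{-1}\Lambda$, the regularity gap of $\mathcal L_\Gamma$, and the implied constants of the Iwasawa--Cartan comparison, proves Theorem \ref{concave}.
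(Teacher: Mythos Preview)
First, a framing point: this theorem is not proved in the present paper at all. It is quoted verbatim from \cite[Thm.~5.3]{LO} and used as a black box; the label ``concave'' is just a tag. So there is no ``paper's own proof'' to compare against here.

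On the substance of your sketch: the reduction is correct. Using $K$-invariance and the cocycle relation you do get $\log\beta^A_\xi(\gamma p,p)=\log\sigma^A(h,\eta)$ with $h=g^{-1}\gamma^{-1}g$ and $\eta=g^{-1}\xi$, and the two inequalities become $\psi(w_0\mu(h))-C\le\psi(\log\sigma^A(h,\eta))\le\psi(\mu(h))+C$. You are also right that Kostant convexity alone closes the argument only for dominant $\psi$, and that for general $\psi\in D_\Gamma^\star$ one must use $\xi\in\Lambda$ and the Anosov property.

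The gap is in your ``resolution'' paragraph. You assert that when $\eta$ is close to the repelling flag the deficits $\psi(\mu(h))-\psi(\log\sigma^A(h,\eta))$ and $\psi(\log\sigma^A(h,\eta))-\psi(w_0\mu(h))$ are ``controlled by the Gromov product $\mathcal G(\eta,\kappa_1(g^{-1}\gamma g)^{+})$, which \dots\ is bounded below.'' But you never write down an inequality linking the deficits to the Gromov product, and the direction matters: on $\Lambda^{(2)}$ the quantity $\psi(\mathcal G(\cdot,\cdot))$ is bounded \emph{below} but not above (it blows up as the two flags coalesce). If your intended relation is, say, Deficit $\ge -\psi(\mathcal G)+O(1)$, then a lower bound on $\psi(\mathcal G)$ gives nothing; if it is Deficit $\ge \psi(\mathcal G)+O(1)$, you must prove that identity, and you have not. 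More seriously, even granting an approximate identity of the shape $\mu(h)-\log\sigma^A(h,\eta)\approx\mathcal G(\eta,h^{-})-\mathcal G(h\eta,h^{+})$, both Gromov terms can be unbounded (the second one since $h\eta\to h^{+}$ for generic $\eta$), so one needs a cancellation argument you do not supply. In its present form the last step is a restatement of the conclusion rather than a proof of it.

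What actually drives the result in \cite{LO} is not a pointwise Iwasawa--Cartan comparison in generic position but a concavity/sub\-additivity estimate for $\psi\circ\mu$ along $\Gamma$: roughly, one realizes $\beta^A_\xi(\gamma p,p)$ as a limit of differences $\underline a(\gamma p,\gamma_n p)-\underline a(p,\gamma_n p)$ with $\gamma_n o\to\xi$, and then uses that for Anosov $\Gamma$ the map $\gamma\mapsto\psi(\mu(g^{-1}\gamma g))$ is coarsely additive (this is where the regularity of $\mathcal L_\Gamma\subset\operatorname{int}\fa^{+}$ and $\psi\ge\psi_\Gamma$ enter). If you want to push your approach through, you need to replace the vague Gromov-product sentence by a precise two-sided estimate of $\mu(h)-\log\sigma^A(h,\eta)$ valid uniformly for $\eta\in g^{-1}\Lambda$, and show the $\psi$-image of the error is bounded below; that is essentially the content of the cited theorem.
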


For $ q\in G/K$ and $r>0$,
the shadow of the ball $B(q,r)$ viewed from $p
=g(o)
\in G/K$ and $\xi\in \cal F$ are respectively defined as
 $$O_r(p,q):=\{gk^+\in \cal F: k\in K,\, gk\inte A^+o\cap  B(q,r)\ne \emptyset\}$$
 where $g\in G$ satisfies $p=g(o)$, and
$$O_r(\xi,q):=\{h^+\in \cal F: h^-=\xi, ho\in  B(q,r)  \}.$$

\begin{lemma}\cite[Lem. 5.7]{LO}  \label{lem.shadow1}
There exists $\kappa>0$ such that for any $p,q\in G/K$ and $r>0$, we have
$$
\sup_{\xi\in O_r(p,q)}\norm{\log \beta^A_\xi(p,q)-\underline a(p,q)}\leq \kappa r.
$$
\end{lemma}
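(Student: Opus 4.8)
\textbf{Proof plan for Proposition \ref{prop.E} (Main Proposition).}
The goal is to show that whenever $\ga_0\in\Ga^\star$ has $\psi(\la(\ga_0))>\log 3N_0+1$, the element $\hat\la(\ga_0)\in AM$ is a $\nu_\psi$-essential value. Fixing a Borel set $B\subset N^+e^+\cap\La$ with $\nu_\psi(B)>0$ and $\e>0$, I must produce $\ga\in\Ga$ with
$$\nu_\psi\{\xi\in B\cap\ga^{-1}B:\beta^{AM}_\xi(\ga^{-1},e)\in \hat\la(\ga_0)(AM)_\e\}>0.$$
The natural strategy is to use conjugates $\ga=\ga_1\ga_0\ga_1^{-1}$ of $\ga_0$: by Lemma \ref{lem.Jd} combined with the equivariance of $\beta^{AM}$, $\beta^{AM}_{\xi}(\ga_1\ga_0\ga_1^{-1},e)$ is conjugate to $\hat\la(\ga_0)$ when $\xi$ lies near the attracting fixed point $y_{\ga_1\ga_0\ga_1^{-1}}=\ga_1 y_{\ga_0}=\ga_1\xi_0$, and the $M$-part distortion can be absorbed into $(AM)_\e$ by the equi-continuity Proposition \ref{lem.diam}. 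The main point is to choose $\ga_1$ so that the virtual ball $\bb B_p(\ga_1\xi_0, r)$ around $\ga_1\xi_0$, with $r$ small enough to lie in the family $\cal B_R(\ga_0,\e)$ (so \eqref{diff} holds), still meets $B\cap(\ga_1\ga_0\ga_1^{-1})^{-1}B$ in positive $\nu_\psi$-measure.

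The concrete steps I would carry out: (i) By density of $\{\ga^+:\ga\in\Ga\cap N^+P\}$ in $\La$ and a Lebesgue density point argument for $\nu_\psi|_B$, reduce to understanding the $\nu_\psi$-mass of $B$ inside small virtual balls $\bb B_p(\ga_1\xi_0,r)$; here the covering Lemma \ref{inc} with constant $N_0$ lets me pass from an arbitrary family of virtual balls to a disjoint subcollection at the cost of the factor $3N_0$. (ii) Estimate the $\nu_\psi$-measure of the shadow/virtual ball $\bb B_p(\ga_1\xi_0,r)$ from above and below in terms of $r$ and $\psi(\mu(\ga_1))$: this is where Lemma \ref{lem.shadow1} (giving $\log\beta^A_\xi(p,q)\approx\underline a(p,q)$ on shadows) and Theorem \ref{concave} (Sullivan-type shadow bounds controlling $\psi(\log\beta^A_\xi(\ga p,p))$ by $\psi(\underline a(\ga p,p))\pm C$) are needed, together with the conformality relation \eqref{gc000} for $\nu_\psi$, to compare $\nu_\psi(\ga_1^{-1}\bb B_p(\ga_1\xi_0,r))$ with $r^{?}$. (iii) Track how applying $\ga_1\ga_0\ga_1^{-1}$ moves $\bb B_p(\ga_1\xi_0,r)$: since $\ga_0$ contracts toward $y_{\ga_0}$ with rate governed by $\la(\ga_0)$, conjugating by $\ga_1$ shows $\ga_1\ga_0\ga_1^{-1}$ maps a definite neighborhood of $\ga_1\xi_0$ into the much smaller ball $\bb B_p(\ga_1\xi_0, e^{-\psi(\la(\ga_0))}r')$ up to the error terms of size $e^{2\norm\psi\e}$ appearing already in the definition of $R_\e$ and $\cal B_R(\ga_0,\e)$; the hypothesis $\psi(\la(\ga_0))>\log 3N_0+1$ is exactly what makes this contraction rate beat the loss $3N_0$ in the covering lemma and the constant $e$, so that a positive fraction of the $B$-mass survives both the restriction to $\ga^{-1}B$ and the Busemann estimate \eqref{diff}.

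The hard part will be step (ii)–(iii): getting a two-sided control of $\nu_\psi$-mass of virtual balls precise enough that the contraction by $e^{-\psi(\la(\ga_0))}$ strictly dominates the combined losses $3N_0\cdot e$, uniformly over the choice of $\ga_1$ (which must be allowed to range over a dense set so that $\ga_1\xi_0$ can be taken at a density point of $B$). This is essentially the core estimate behind \cite[Thm.~5.3, Lem.~6.12]{LO}, now carried out with the $AM$-valued cocycle rather than the $A$-valued one; the new ingredient relative to \cite{LO} is that the $M$-coordinate must be handled simultaneously, and it is precisely Proposition \ref{lem.diam} (equi-continuity of $\eta\mapsto\beta^{AM}_\eta(\ga_n^{-1}g,g)$ at $g^+$ along sequences $\ga_n$ with $\ga_n^{-1}\xi\to g^+$, $\ga_n^{-1}g(o)\to g^-$) that guarantees the $M$-part of $\beta^{AM}_\xi(\ga^{-1},e)$ stays within $M_\e$ of the value at the center $\ga_1\xi_0$, which by Lemma \ref{lem.Jd} equals $\hat\la(\ga_0)$ up to conjugation. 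Once these estimates are assembled, choosing $\ga_1$ with $\ga_1\xi_0$ sufficiently close to a density point of $B$ and $\psi(\mu(\ga_1))$ large, and then taking $r$ small within $\cal B_R(\ga_0,\e)$, yields the required set of positive measure, completing the proof.
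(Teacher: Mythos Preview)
Your proposal does not address the statement at hand. The lemma you were asked to prove is Lemma~\ref{lem.shadow1}: the existence of a constant $\kappa>0$ such that $\norm{\log\beta^A_\xi(p,q)-\underline a(p,q)}\le \kappa r$ for all $\xi\in O_r(p,q)$. Instead, you have written a proof plan for Proposition~\ref{prop.E}, which is a completely different (and much later) statement. In the paper itself, Lemma~\ref{lem.shadow1} carries no proof at all; it is quoted verbatim from \cite[Lem.~5.7]{LO}. A correct submission would either reproduce that argument (a direct computation: if $\xi=gk^+\in O_r(p,q)$ with $gk\exp(t)o\in B(q,r)$ for some $t\in\fa^+$, then write $h^{-1}gk\exp(t)=k'a'n'$ with $d(k'a'n',e)<r$, so $\log\beta^A_\xi(p,q)=\log\sigma^A(h^{-1}g,k^+)=t+\log a'$ and $\underline a(p,q)=\mu(g^{-1}h)$ differs from $t$ by at most $O(r)$), or simply cite \cite{LO}.

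As for the content of what you did write: your outline for Proposition~\ref{prop.E} is broadly in the spirit of the paper's actual proof, which also uses conjugates $\ga_1\ga_0\ga_1^{-1}$, the virtual balls $D(\ga\xi_0,r)$ in the family $\cal B_R(\ga_0,\e)$, the covering constant $N_0$, and the contraction $\ga\ga_0\ga^{-1}D\subset D$. However, the paper organizes these ingredients differently: rather than a direct two-sided mass estimate as in your step (ii)--(iii), it first packages the density argument into a differentiation-type Lemma~\ref{lem.sh2} (showing that for $\nu_\psi$-a.e.\ $\xi\in B$ one can find $D\ni\xi$ in $\cal B_R(\ga_0,\e)$ with $\nu_\psi(B\cap D)/\nu_\psi(D)$ arbitrarily close to $1$ and with the $AM$-Busemann constraint already built in), and then concludes by invoking the measure estimate from \cite[Proof of Prop.~10.7]{LO} to get $\nu_\psi\big((D\cap B)\cap\ga\ga_0\ga^{-1}(D\cap B)\big)>0$. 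The key new input relative to \cite{LO}, namely controlling the $M$-component via Proposition~\ref{lem.diam}, is handled inside Lemma~\ref{lem.ge} rather than in the final assembly. But none of this salvages your submission for the lemma actually under consideration.
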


We let $C=C( \psi, p)>0$  and $\kappa>0$ be the constants  given by Theorem \ref{concave}
and  Lemma \ref{lem.shadow1} respectively. Since $\xi_0$ belongs to the shadow $  O_{\e/(8\kappa)}  (\eta,p)$, we can choose $0<s=s(\gamma_0)< R$
small enough such that 
 \be\label{cho}
\bb B_p( \xi_0,e^{\psi(\la(\ga_0)+\la(\ga_0^{-1}))+\frac{1}{2}\norm{\psi}\e+2C}s)\subset 
 O_{\e/(8\kappa)} (\eta,p).
\ee
Next, observe that the map $\xi'\mapsto \beta_{\xi'}(g,\ga_0g)$ is continuous at $\xi_0$, as $g^{-1}\xi_0=e^+\in N^+e^+$.
Hence we may further assume that $s$ is small enough so that
\be\label{eq.cond2}  
\beta_{\xi'}^{AM}(g,\ga_0g)\in \beta_{\xi_0}^{AM}(g,\ga_0g)(AM)_{\e}\quad \text{ for all }\xi'\in\bb B_p(\xi_0,e^{2C}s).\ee

For each $\ga\in\Ga$, set
\begin{align*}
D(\ga\xi_0,r)&:=\bb B_p(\ga \xi_0,\tfrac{1}{3N_0} e^{-\psi(\mu(g^{-1}\ga g) +\mu(g^{-1}\ga^{-1} g))}r)\text{ and }\\
3N_0D(\ga \xi_0, r) &:=\bb B_p(\ga \xi_0,e^{-\psi(\mu(g^{-1}\ga g) +\mu(g^{-1}\ga^{-1} g))}r ).
\end{align*}
Here note that $\underline{a}(\ga^{-1}p,p)= \mu(g^{-1}\ga g)$ and 
$\i \underline{a}(\ga^{-1}p,p)= \mu(g^{-1}\ga^{-1} g)$.
\begin{lemma}\label{lem.WD2}\label{spn} \label{lem.ge} Let $R>0$ and $\xi\in \La-\{\eta\}$.
Let $\ga_i\in \G$ be a sequence such that  $\gamma_i^{-1} p\to \eta$,
$\gamma_i^{-1}\xi\to \xi_0$,
and $\beta_{\xi}^M(\ga_i,e)\to e$ as $i\to\infty$.
Then, by passing to a subsequence,
 the following holds
for all sufficiently small $r>0$: there exists $i_0=i_0(r)>0$ such  that for all $i\ge i_0$, we have
\begin{enumerate}
\item $\xi \in D(\ga_i\xi_0,r)$ 
and
$D(\ga_i\xi_0,r)\in \cal B_R(\ga_0, \e)$; in particular, for any $R>0$,
$$\spa\subset \bigcup_{D\in \cal B_R(\ga_0,\e)} D.$$
\item $\{ \beta_{\xi'}^{AM}(e,\ga_i\ga_0\ga_i^{-1}):  \xi'\in 3N_0D(\ga_i\xi_0,r)\} \subset \hat\la(\ga_0)(AM)_{O(\e)}.$
\end{enumerate}

\end{lemma}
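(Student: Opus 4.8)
The plan is to follow the architecture of the corresponding lemma of \cite{LO}, now upgrading its $A$-valued estimates to $AM$-valued ones by feeding in the equi-continuity Proposition \ref{lem.diam} and the hypothesis $\beta_\xi^M(\ga_i,e)\to e$.

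\textbf{Setup.} First I would pass to a subsequence and write $g^{-1}\ga_ig=k_ia_i\ell_i^{-1}\in KA^+K$ with $k_i\to k_0$, $\ell_i\to\ell_0$ in $K$; since $\Ga$ is Anosov, Lemma \ref{Anre} gives $a_i\to\infty$ regularly in $A^+$. The assumption $\ga_i^{-1}p\to\eta=g^-$ then forces $\ell_0^-=e^-$, i.e.\ $\ell_0\in M$, while $\ga_i^{-1}\xi\to\xi_0=g^+$ forces $k_0^+=g^{-1}\xi$; hence $\ga_i\xi_0=gk_ia_i\ell_i^{-1}e^+\to gk_0^+=\xi$ and $g^{-1}\ga_i\xi_0=(g^{-1}\ga_ig)e^+\to k_0^+=g^{-1}\xi$, which lies in $N^+e^+$ by Lemma \ref{ant} since $\xi\neq\eta=g^-$. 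In particular $\ga_i\xi_0\in N^+e^+$ for all large $i$, so all $AM$-valued Busemann cocycles in the statement are eventually defined. Finally I would apply Proposition \ref{lem.diam} to $\ga_i$: after a further subsequence, $\eta'\mapsto\beta_{\eta'}^{AM}(\ga_i^{-1}g,g)$ is equi-continuous at $g^+=\xi_0$, with a neighborhood $U_\e$ of $\xi_0$, uniform in $i$, such that $\beta_{\eta'}^{AM}(\ga_i^{-1}g,g)\in\beta_{\xi_0}^{AM}(\ga_i^{-1}g,g)(AM)_\e$ for $\eta'\in U_\e$.

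\textbf{Proof of (1).} To get $\xi\in D(\ga_i\xi_0,r)$: the cocycle identity for the $\fa$-valued Gromov product gives the transformation rule $d_{\psi,p}(\ga_i\zeta_1,\ga_i\zeta_2)=e^{-\psi(\log\beta^A_{g^{-1}\ga_i\zeta_1}(e,\,g^{-1}\ga_ig))-\psi(\i\log\beta^A_{g^{-1}\ga_i\zeta_2}(e,\,g^{-1}\ga_ig))}d_{\psi,p}(\zeta_1,\zeta_2)$, and since $g^{-1}\xi=k_0^+$ and $g^{-1}\ga_i\xi_0=(g^{-1}\ga_ig)e^+$ lie in the shadow $O_{O(\e)}(o,(g^{-1}\ga_ig)(o))$ once $i$ is large, Lemma \ref{lem.shadow1} identifies both exponents, up to $O(\|\psi\|\e)$, with $\psi(\mu(g^{-1}\ga_ig))$ and $\psi(\i\mu(g^{-1}\ga_ig))=\psi(\mu(g^{-1}\ga_i^{-1}g))$; as $d_{\psi,p}(\ga_i^{-1}\xi,\xi_0)\to0$ this yields $\xi\in D(\ga_i\xi_0,r)$ for $i$ large. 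To get $D(\ga_i\xi_0,r)\in\cal B_R(\ga_0,\e)$: its radius is $\le r/(3N_0)<R$ because $\psi\ge0$ on $\fa^+$, so it only remains to verify \eqref{diff} on $3N_0D(\ga_i\xi_0,r)$. Here the $\pm\psi$-bound of Theorem \ref{concave} shows that $\ga_i^{-1}$ distorts $d_{\psi,p}$ near $\ga_i\xi_0$ by a factor at most $e^{\psi(\mu(g^{-1}\ga_ig)+\mu(g^{-1}\ga_i^{-1}g))+2C}$, whence $\ga_i^{-1}\big(3N_0D(\ga_i\xi_0,r)\big)\subset\bb B_p(\xi_0,e^{2C}r)$; choosing $r\le s(\ga_0)$ and small enough that this ball and its (controlled) $\ga_0^{-1}$-image---$\ga_0$ fixes $\xi_0$---both lie inside $U_\e$ and inside the ball of \eqref{eq.cond2}, I would expand $\beta_{\xi'}^{AM}(g,\ga_i\ga_0\ga_i^{-1}g)=\beta_{\ga_i^{-1}\xi'}^{AM}(\ga_i^{-1}g,g)\,\beta_{\ga_i^{-1}\xi'}^{AM}(g,\ga_0g)\,\big[\beta_{\ga_0^{-1}\ga_i^{-1}\xi'}^{AM}(\ga_i^{-1}g,g)\big]^{-1}$ (cocycle identity, equivariance, $\ga_0^{-1}\xi_0=\xi_0$) and estimate the three factors by the equi-continuity of the setup and \eqref{eq.cond2}, using that $A$ commutes with $M$ and that $M_\e$ is bi-$M$-invariant to absorb all error terms into a single $(AM)_\e$ coset; running this with $\e$ replaced by a fixed fraction of itself gives \eqref{diff}.

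\textbf{Proof of (2).} Applying the same cocycle manipulations with $\xi'=\ga_i\xi_0$ gives $\beta_{\ga_i\xi_0}^{AM}(e,\ga_i\ga_0\ga_i^{-1})=c_i\,\hat\la(\ga_0)\,c_i^{-1}$, where $c_i:=\beta_{\xi_0}^{AM}(\ga_i^{-1},e)$, using $\beta_{\ga_i\xi_0}^{AM}(\ga_i,\ga_i\ga_0)=\beta_{\xi_0}^{AM}(e,\ga_0)=\hat\la(\ga_0)$ (equivariance and Lemma \ref{lem.Jd}), $\beta_{\ga_i\xi_0}^{AM}(e,\ga_i)=\beta_{\xi_0}^{AM}(\ga_i^{-1},e)$, and $\ga_0^{-1}\xi_0=\xi_0$. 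The key point is that the $M$-component of $c_i$ tends to $e$: one has $\beta_{\ga_i^{-1}\xi}^{AM}(\ga_i^{-1},e)=\sigma^{AM}(\ga_i,\ga_i^{-1}\xi)=\beta_\xi^{AM}(\ga_i,e)^{-1}$, whose $M$-part is $\beta_\xi^M(\ga_i,e)^{-1}\to e$ by hypothesis; decomposing both $\beta_{\ga_i^{-1}\xi}^{AM}(\ga_i^{-1},e)$ and $c_i=\beta_{\xi_0}^{AM}(\ga_i^{-1},e)$ through $g$ and comparing, the equi-continuity of $\eta'\mapsto\beta_{\eta'}^{AM}(\ga_i^{-1}g,g)$ at $g^+$---applicable because $\ga_i^{-1}\xi$ and $\xi_0$ both lie in $U_\e$ for $i$ large---together with the continuity of the remaining $g$-factors shows that $c_i$ and $\beta_\xi^{AM}(\ga_i,e)^{-1}$ differ by an element of $(AM)_{O(\e)}$, so the $M$-part of $c_i$ lies in $M_{O(\e)}$. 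Since $A$ commutes with $M$, conjugating $\hat\la(\ga_0)$ by $c_i$ moves only its $M$-part and only within $M_{O(\e)}$, giving $\hat\la(\ga_i\ga_0\ga_i^{-1})=\beta_{\ga_i\xi_0}^{AM}(e,\ga_i\ga_0\ga_i^{-1})\in\hat\la(\ga_0)(AM)_{O(\e)}$. Finally, for $\xi'\in3N_0D(\ga_i\xi_0,r)$ I would rewrite $\beta_{\xi'}^{AM}(e,\ga_i\ga_0\ga_i^{-1})$ in terms of $\beta_{\xi'}^{AM}(g,\ga_i\ga_0\ga_i^{-1}g)$ via cocycle identities and the continuity of the fixed $g$-factors, and combine this with \eqref{diff}---valid by part (1), since $D(\ga_i\xi_0,r)\in\cal B_R(\ga_0,\e)$---to get $\beta_{\xi'}^{AM}(e,\ga_i\ga_0\ga_i^{-1})\in\hat\la(\ga_i\ga_0\ga_i^{-1})(AM)_{O(\e)}\subset\hat\la(\ga_0)(AM)_{O(\e)}$. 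The displayed inclusion $\spa\subset\bigcup_{D\in\cal B_R(\ga_0,\e)}D$ then follows by applying (1) to every $\xi\in\spa-\{\eta\}$, the required sequence $\ga_i$ being furnished by Lemma \ref{lem.C2}.

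\textbf{Where the difficulty lies.} The routine parts are the $A$-valued size estimates of part (1), essentially as in \cite{LO}. The genuine obstacle is the $M$-component of part (2): one must show that $\hat\la(\ga_i\ga_0\ga_i^{-1})$ is $(AM)_{O(\e)}$-close to $\hat\la(\ga_0)$ and not merely $AM$-conjugate to it. This is exactly where the hypothesis $\beta_\xi^M(\ga_i,e)\to e$ and the $AM$-valued equi-continuity Proposition \ref{lem.diam}---the new input of this paper over the $A$-valued framework of \cite{LO}---become indispensable, and carrying it through requires careful tracking of which near-$\xi_0$ points land in the uniform neighborhood $U_\e$ of Proposition \ref{lem.diam} and in the neighborhoods pinned down by \eqref{cho} and \eqref{eq.cond2}; this is the reason the scale $s(\ga_0)$ is chosen as it is and the factors $3N_0$ are built into the definitions of $D(\ga_i\xi_0,r)$ and $r_g$.
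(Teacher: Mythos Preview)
Your proposal is correct and follows essentially the same route as the paper: the same invocation of Proposition~\ref{lem.diam} to get a uniform neighborhood $U_\e$ of $\xi_0$, the same pullback $\ga_i^{-1}\big(3N_0D(\ga_i\xi_0,r)\big)\subset\bb B_p(\xi_0,e^{2C}r)$ (the paper cites \cite[Lem.~10.12]{LO} for this), the same three-factor cocycle expansion of $\beta^{AM}_{\cdot}(g,\ga_i\ga_0\ga_i^{-1}g)$ through $g$ and $\ga_0g$ to verify \eqref{diff}, and the same conjugation identity $\beta_{\ga_i\xi_0}^{AM}(e,\ga_i\ga_0\ga_i^{-1})=c_i\,\hat\la(\ga_0)\,c_i^{-1}$ with the $M$-part of $c_i$ controlled via the hypothesis $\beta_\xi^M(\ga_i,e)\to e$ and Proposition~\ref{lem.diam}. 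The only cosmetic difference is that you extract $\ga_i\xi_0\to\xi$ directly from the Cartan decomposition in your setup, whereas the paper obtains it by appealing to Lemma~\ref{lem.C2}; your route is arguably cleaner here since the lemma is stated for a given sequence $\ga_i$, not one produced by Lemma~\ref{lem.C2}.
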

\begin{proof}
Let $g\in G$ be such that $p=g(o)$.
Note that $\ga_i^{-1}go\to\eta=g^-$ and $\ga_i^{-1}\xi\to \xi_0=g^+$. 
By passing to a subsequence, we have
a neighborhood $U_\e\subset \F$ of $\xi_0$ associated to the sequence $\ga_i$ given by Proposition \ref{lem.diam}.
Since $\xi_0\in U_\e$, there exists $R_1>0$ such that
 $$
 \bb B_p(\xi_0,e^{2C}R_1),\ga_0^{-1}\bb B_p(\xi_0,e^{2C}R_1)\subset U_\e.
 $$
Let 
$0<r< \min(s(\ga_0),R_\e/2,R_1,R)$.
In view of \cite[Lem. 10.12]{LO},  we have $3N_0D(\ga_i\xi_0,r) \subset \ga_i\bb B_p(\xi_0,e^{2C}r)$.
In order to show that $D(\ga_i\xi_0,r)\in \cal B_R(\ga_0,\e)$, it suffices to check that for all $\xi'\in \bb B_p(\xi_0,e^{2C}r)$,
$$
\beta_{\xi'}^{M}(\ga_i^{-1}g,\ga_0\ga_i^{-1}g)\in \beta_{\xi_0}^{M}(\ga_i^{-1}g,\ga_0\ga_i^{-1}g)M_{\e};
$$
this implies that $r<r_g(\ga_i)$.

We start by noting that since $r\leq s(\ga_0)$, 
we have $\beta_{\xi'}^{M}(g,\ga_0g)\in \beta_{\xi_0}^{M}(g,\ga_0g)M_{\e}.$ Since $\xi', \ga_0^{-1}\xi'\in U_\e$, by Proposition \ref{lem.diam},
for all sufficiently large $i$,
\begin{align*}
\beta_{\xi'}^{M}(\ga_i^{-1}g,\ga_0\ga_i^{-1}g)&=\beta_{\xi'}^{M}(\ga_i^{-1}g,g)\beta_{\xi'}^{M}(g,\ga_0g)\beta_{\xi'}^{M}(\ga_0g,\ga_0\ga_i^{-1}g)\\
&=\beta_{\xi'}^{M}(\ga_i^{-1}g,g)\beta_{\xi'}^{M}(g,\ga_0g)\beta_{\ga_0^{-1}\xi'}^{M}(\ga_i^{-1}g,g)^{-1}\\
&\in\beta_{\xi_0}^{M}(\ga_i^{-1}g,g)\beta_{\xi_0}^{M}(g,\ga_0g)\beta_{\xi_0}^{M}(\ga_i^{-1}g,g)^{-1}M_{O(\e)}\\
&=\beta_{\xi_0}^{M}(\ga_i^{-1}g,\ga_0\ga_i^{-1}g)M_{O(\e)},
\end{align*}
which verifies that $D(\ga_i\xi_0,r)$ belongs to the family $\cal B_R(\ga_0,\e)$. The claim that
$\xi \in D(\ga_i\xi_0,r)$ can be shown in the same way as in the proof of \cite[Lem. 10.12]{LO}. This proves (1).

(1) implies that for all sufficiently large $i$ 
and $\xi'\in 3N_0D(\ga_i\xi_0,r)$, we have
\begin{equation}\label{eq.g1}
\beta_{\xi'}^{AM}(g,\ga_i\ga_0\ga_i^{-1}g)\in\beta_{\ga_i \xi_0}^{AM}(g,\ga_i\ga_0\ga_i^{-1}g)(AM)_\e.
\end{equation}
Now note that for all $\xi'\in 3N_0D(\ga_i\xi_0,r)$,
\begin{align}\label{eq.g2}
\beta_{\xi'}^{AM}(e,\ga_i\ga_0\ga_i^{-1})&=\beta_{\xi'}^{AM}(e,g) \beta_{\xi'}^{AM}(g,\ga_i\ga_0\ga_i^{-1}g)\beta_{\xi'}^{AM}( \ga_i\ga_0\ga_i^{-1}g,\ga_i\ga_0\ga_i^{-1})\notag\\
&=\beta_{\xi'}^{AM}(e,g) \beta_{\xi'}^{AM}(g,\ga_i\ga_0\ga_i^{-1}g)\beta_{\ga_i\ga_0^{-1}\ga_i^{-1}\xi'}^{AM}( e,g)^{-1}.
\end{align}
On the other hand,
{
\begin{align*}
d_p(\ga_i\ga_0\ga_i^{-1}\xi',\ga_i\xi_0)&=e^{-\psi(\log\beta_{\xi'}^A(\ga_i\ga_0^{-1}\ga_i^{-1}g,g)+\op i\log\beta_{\ga_i\xi_0}^A(\ga_i\ga_0^{-1}\ga_i^{-1}g,g))}d_p(\xi',\ga_i\xi_0)\\
&\leq e^{\psi(\la(\ga_0)+\la(\ga_0^{-1}))+2\norm{\psi}\e}d_p(\xi',\ga_i\xi_0),
\end{align*}
and hence 
$$
\xi',\ga_i\ga_0\ga_i^{-1}\xi'\in\bb B_p(\ga_i\xi_0,e^{\psi(\la(\ga_0)+\la(\ga_0^{-1}))+2\norm{\psi}\e}r ).
$$
Since \be\label{fin5} \ga_i\xi_0\to \xi\quad\text{ as $i\to\infty$}\ee
by Lemma \ref{lem.C2} and $r<R_\e/2$,
}
for all sufficiently large $i$ and all $\xi'\in 3N_0D(\ga_i\xi_0,r)$, the elements 
 $\xi'$, $\ga_i\ga_0\ga_i^{-1}\xi'$, and $\ga_i\xi_0$ all  belong to the subset $\bb B_p(\xi,
{
e^{\psi(\la(\ga_0)+\la(\ga_0^{-1}))+2\norm{\psi}\e}
} R_\e)$.  Hence
\begin{equation}\label{eq.g3}
\beta_{\xi'}^{AM}(e,g), \beta_{\ga_i\ga_0^{-1}\ga_i^{-1}\xi'}^{AM}( e,g),
{
 \beta_{\ga_i\xi_0}^{AM}(e,g)
}
\in \beta_{\xi}^{AM}(e,g)M_\e.
\end{equation}
Combining \eqref{eq.g1}, \eqref{eq.g2} and \eqref{eq.g3}, it follows that for all $\xi'\in 3N_0D(\ga_i\xi_0,r)$,
$$
\beta_{\xi'}^{AM}(e,\ga_i\ga_0\ga_i^{-1})\in\beta_{\ga_i\xi_0}^{AM}(e,\ga_i\ga_0\ga_i^{-1})(AM)_{O(\e)}.
$$

 
Note that by Proposition \ref{lem.diam} and \eqref{fin5}, we get
\begin{align}\label{fin3} 
    \beta_{\xi_0}^{AM}(\ga_i^{-1},e)&=\beta_{\xi_0}^{AM}(\ga_i^{-1},\ga_i^{-1}g)\beta_{\xi_0}^{AM}(\ga_i^{-1}g,g)\beta_{\xi_0}^{AM}(g,e)\notag \\
    &= \beta_{\ga_i\xi_0}^{AM}(e,g)\beta_{\xi_0}^{AM}(\ga_i^{-1}g,g)\beta_{\xi_0}^{AM}(g,e)\notag \\
    &\in \beta_{\xi}^{AM}(e,g)\beta_{\ga_i^{-1}\xi}^{AM}(\ga_i^{-1}g,g)\beta_{\xi_0}^{AM}(g,e)(AM)_{O(\e)}\notag \\
    &=\beta_{\ga_i^{-1}\xi}^{AM}(\ga_i^{-1},\ga_i^{-1}g)\beta_{\ga_i^{-1}\xi}^{AM}(\ga_i^{-1}g,g)\beta_{\ga_i^{-1}\xi}^{AM}(g,e)(AM)_{O(\e)}\notag\\
    &=\beta_{\ga_i^{-1}\xi}^{AM}(\ga_i^{-1},e)(AM)_{O(\e)}
\end{align}

Since
$\beta_{\ga_i^{-1}\xi}^{M}(\ga_i^{-1},e)=\beta_\xi^M(e,\ga_i)\to e$ as $i\to\infty$ by the hypothesis,
\eqref{fin3} implies that
\be\label{fin4}
\beta_{\xi_0}^{M}(\ga_i^{-1},e)\in M_{O(\e)}\text{ for all large enough }i.\ee
Since
\begin{align*}
\beta_{\ga_i\xi_0}^{AM}(e,\ga_i\ga_0\ga_i^{-1})&=\beta_{\ga_i\xi_0}^{AM}(e,\ga_i)\beta_{\ga_i\xi_0}^{AM}(\ga_i,\ga_i\ga_0)\beta_{\ga_i\xi_0}^{AM}(\ga_i\ga_0,\ga_i\ga_0\ga_i^{-1})\\
&=\beta_{\xi_0}^{M}(\ga_i^{-1},e)\hat\la(\ga_0)\beta_{\xi_0}^{M}(\ga_i^{-1},e)^{-1},
\end{align*}
we deduce from \eqref{fin4} that $$\beta_{\xi'}^{AM}(e,\ga_i\ga_0\ga_i^{-1})\in \hat\la(\ga_0) (AM)_{O(\e)}$$ as desired.
\end{proof}

\begin{lemma}\label{lem.sh2}
Let $B\subset \mathcal F$ be a Borel set with $\nu_\psi(B)>0$.
Then for $\nu_\psi$-a.e. $\xi\in B$,
$$
\limsup\limits_{R\to0}\left\{\frac{\nu_\psi (B\cap D(\ga\xi_0,r))}{\nu_\psi (D(\ga\xi_0,r))} : 
\begin{array}{c}
\xi\in D(\ga\xi_0,r), r<R,\text{ and }\\
\beta_{\xi'}^{AM}(e,\ga\ga_0\ga^{-1})\in\hat\la(\ga_0)(AM)_{\e}\\
\text{ for all }\xi'\in 3N_0D(\ga\xi_0,r)
\end{array}
\right\}=1.
$$
\end{lemma}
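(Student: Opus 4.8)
The plan is to recognize the displayed quantity as a restricted maximal-density-type statement and to deduce it from a Lebesgue-density argument adapted to the family of virtual balls $\cal B_R(\ga_0,\e)$. First I would observe that the condition ``$\beta_{\xi'}^{AM}(e,\ga\ga_0\ga^{-1})\in\hat\la(\ga_0)(AM)_\e$ for all $\xi'\in 3N_0D(\ga\xi_0,r)$'' is, by Lemma \ref{lem.ge}(2) applied along sequences $\ga_i$ coming from Lemma \ref{lem.C2}, satisfiable for $\nu_\psi$-almost every $\xi$: indeed for $\xi\in\spa-\{\eta\}$ one has a sequence $\ga_i\in\Ga$ with $\ga_i^{-1}p\to\eta$, $\ga_i^{-1}\xi\to\xi_0$ and $\beta_\xi^M(\ga_i,e)\to e$, and for such a sequence the balls $D(\ga_i\xi_0,r)$ (for all small $r$) both contain $\xi$, lie in $\cal B_R(\ga_0,\e)$, and satisfy the displayed $AM$-constraint. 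Since $\nu_\psi(\spa)=1$ by Corollary \ref{full}, the family of ``admissible'' balls shrinking to $\xi$ is non-empty for a.e.\ $\xi$.

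Next I would run the standard Lebesgue-density machinery relative to this admissible family. Fix $B$ with $\nu_\psi(B)>0$ and let $B'\subset B$ be the set of $\xi$ where the $\limsup$ in question is $<1$, say bounded by $1-\delta$ on a positive-measure subset $B_\delta'$; I aim for a contradiction. For each $\xi\in B_\delta'$ and each small $R$, pick an admissible ball $D(\ga\xi_0,r)$, $r<R$, containing $\xi$ with $\nu_\psi(B\cap D)/\nu_\psi(D)\le 1-\delta/2$. This gives a Vitali cover of $B_\delta'$ by admissible balls of arbitrarily small radius. Apply the Besicovitch-type covering Lemma \ref{inc} (with the uniform constant $N_0$, valid since $p=g(o)$ with $g\in\cal C$ a fixed compact set) to extract a subcollection $\{D_j\}$ which is disjoint and whose $3N_0$-dilates $3N_0D_j$ still cover $B_\delta'$; the constraint in Lemma \ref{inc} is exactly compatible with the fact that the quasi-metric balls dilate controllably. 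Then
\[
\nu_\psi(B_\delta')\le\sum_j\nu_\psi(3N_0D_j)\le C\sum_j\nu_\psi(D_j)\le\frac{C}{?}\sum_j\nu_\psi(B\cap 3N_0D_j),
\]
where the second inequality uses the $\psi$-conformal quasi-doubling estimate for $\nu_\psi$ on virtual balls (the shadow lemma inputs, Theorem \ref{concave} and Lemma \ref{lem.shadow1}, control $\nu_\psi(3N_0D)/\nu_\psi(D)$), and where we must be careful that the $3N_0$-dilates have bounded overlap rather than being disjoint—this is where Lemma \ref{inc} is used a second time, to pass from the cover by dilates back to a comparison with $\sum_j\nu_\psi(B\cap D_j)$. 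Combining with $\nu_\psi(B\cap D_j)\le(1-\delta/2)\nu_\psi(D_j)$ and the disjointness of the $D_j$ yields $\nu_\psi(B_\delta')\le(1-\delta/2)\cdot(\text{bounded multiple of }\nu_\psi(B))$ uniformly in $R$; letting the balls shrink and iterating (or, more cleanly, applying the inequality on a neighborhood of $B_\delta'$ of measure close to $\nu_\psi(B_\delta')$) forces $\nu_\psi(B_\delta')=0$, the desired contradiction.

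The main obstacle I expect is the bounded-overlap control in the dilation step: unlike in a genuine metric space, $d_{\psi,p}$ is only a \emph{virtual} visual quasi-metric, so ``$D$ and $D'$ intersect $\Rightarrow$ one is contained in a bounded dilate of the other'' must be invoked in the precise form packaged by Lemma \ref{inc}, and the doubling estimate $\nu_\psi(3N_0D)\le C\,\nu_\psi(D)$ needs the shadow-lemma comparison (Theorem \ref{concave} and Lemma \ref{lem.shadow1}) together with the explicit radius scaling $r\mapsto e^{-\psi(\mu(g^{-1}\ga g)+\mu(g^{-1}\ga^{-1}g))}r$ in the definition of $D(\ga\xi_0,r)$, which is exactly tailored so that $\nu_\psi(D(\ga\xi_0,r))\asymp e^{-\psi(\mu(g^{-1}\ga g))}r^{?}\cdot(\text{conformal factor})$ with constants independent of $\ga$. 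Once the doubling and covering are in hand, the density argument is routine; the only genuinely new point compared to the analogous Lebesgue-density lemma in \cite{LO} is that the family is now \emph{restricted} to admissible balls, and the restriction is harmless precisely because $\nu_\psi(\spa)=1$ guarantees admissible balls shrink to a.e.\ point.
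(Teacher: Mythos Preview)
Your proposal is correct and follows essentially the same route as the paper. The paper's proof defines the maximal operator $h^*$, uses Lemmas \ref{lem.C2} and \ref{lem.ge} together with Corollary \ref{full} to see that admissible balls shrink to $\nu_\psi$-a.e.\ point (exactly your first paragraph), and then defers the density statement $h^*=h$ $\nu_\psi$-a.e.\ to the argument of \cite[Proof of Prop.~10.17]{LO}, which is precisely the Vitali/Lebesgue-density argument you sketch using Lemma \ref{inc} and the shadow-lemma doubling control; finally one takes $h=\mathbf 1_B$.

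One small comment on execution: your contradiction step is left a bit loose (the ``?'' in the displayed chain, and the vague ``iterating''). The clean way to close it, as in \cite{LO}, is to first prove the weak-type maximal inequality $\nu_\psi(\{h^*>\lambda\})\le C\lambda^{-1}\int h\,d\nu_\psi$ for nonnegative $h\in L^1$ via Lemma \ref{inc} and doubling, and then run the standard ``approximate by continuous functions'' argument to get $h^*=h$ a.e.; this avoids the bounded-overlap juggling you flag as the main obstacle.
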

\begin{proof}
To each Borel function $h: G/P\to\bb R$, we associate a function $h^* : G/P\to\bb R$ defined by
$$
h^*(\xi)=\limsup\limits_{R\to0}\left\{\frac{1}{\nu_\psi (D)}\int_D h\,d\nu_\psi :
\begin{array}{c}
\xi\in D=D(\ga\xi_0,r), r<R,\text{ and }\\
\beta_{\xi'}^{AM}(e,\ga\ga_0\ga^{-1})\in\hat\la(\ga_0)(AM)_{\e}\\
\text{ for all }\xi'\in 3N_0D(\ga\xi_0,r)
\end{array}
\right\}.
$$
By Lemma \ref{lem.C2} and \ref{lem.ge}, $h^*$ is well defined on $\spa-\{\eta\}$ and hence $\nu_\psi $-a.e. on $G/P$ by Corollary \ref{full}.
We may then apply the same argument as in \cite[Proof of Prop. 10.17]{LO} to deduce $h^*=h$  $\nu_\psi $-a.e. Hence the lemma follows by taking $h={\bf 1}_B$. \end{proof}

\subsection*{Proof of Proposition \ref{prop.E}}
Let $B\subset\cal F$ be a Borel set such that $\nu_\psi (B)>0$ and let $\e>0$ be arbitrary.
By Lemma \ref{lem.sh2}, for $\nu_\psi$-a.e. $\xi\in B$, there exist $\ga\in\Ga^\star$ and $D= D(\ga\xi_0,r)\in\cal B_{R}(
\ga_0,\e)$ containing $\xi$ such that
\begin{enumerate}
\item
$\nu_\psi(D\cap B)> (1+e^{-\psi(\la(\ga_0^{-1}))-\norm{\psi}\e})^{-1}\nu_\psi(B)$,\text{ and}
\item
$\beta_{\xi'}^{AM}(e,\ga\ga_0\ga^{-1})\in\hat\la(\ga_0)(AM)_{\e}\text{ for all }\xi'\in 3N_0D(\ga\xi_0,r)$.
\end{enumerate}
We claim that
\begin{equation}\label{eq.sups}
\{\xi\in B\cap\ga\ga_0\ga^{-1} B : \beta_\xi^{AM}(e,\ga \ga_0\ga^{-1})\in \hat\la(\ga_0)(AM)_\e\}
\end{equation}
has a positive $\nu_\psi$-measure, which will finish the proof.

We have $\ga{\ga_0}\ga^{-1} D\subset D$ by \cite[Proof of Prop. 10.7]{LO}.
Together with (2) above, it follows that 
$$
\beta_\xi^{AM}(e,\ga \ga_0\ga^{-1})\in \hat\la(\ga_0)(AM)_\e \quad\text{for all $\xi\in\ga\ga_0\ga^{-1}D$.}
$$

Consequently, \eqref{eq.sups} contains
\begin{equation}\label{eq.subs}
(D\cap B)\cap \ga{\ga_0}\ga^{-1}(D\cap B),
\end{equation}
which has a positive $\nu_\psi$-measure by \cite[Proof of Prop. 10.7]{LO}.
This proves the claim.
$\qed$

\begin{Rmk}\rm
We remark that the approach of this paper shows the following result when $G$ has rank one.
\begin{thm}\label{final}
Let $G$ have rank one, and $\Gamma<G$ be a Zariski dense discrete subgroup. Let $\nu_o$ be an ergodic $\Gamma$-conformal probability measure on the limit set of $\Gamma$.
Let $m^{\BMS}$ and $m^{\BR}$ be respectively the $\BMS$ and $\BR$ measures on $\Gamma\ba G$ associated to $\nu_o$.
Suppose that $m^{\BMS}$ is $AM$-ergodic. Then $m^{\BMS}$ is $A$-ergodic and  $m^{\BR}$ is $N$-ergodic.
\end{thm}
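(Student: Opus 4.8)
The plan is to run, in the rank-one setting, the chain of arguments developed in Sections~\ref{defyy}--\ref{thm1} for Anosov groups, after observing that the Anosov hypothesis was used there only through three features, all of which are automatic when $\op{rank}G=1$ for an arbitrary Zariski dense discrete $\Gamma$: (i) antipodality of the limit set, which holds because $\cal F^{(2)}$ is then the complement of the diagonal in $\cal F\times\cal F$, so that $(\La\times\La)\setminus\Delta\subset\cal F^{(2)}$ trivially; (ii) regularity of Cartan projections along divergent sequences in $\Gamma$ (Lemma~\ref{Anre}), which is automatic since $\fa\cong\br$ and the limit cone is all of $\fa^+$; and (iii) atom-freeness of the conformal measure, classical for non-elementary $\Gamma$. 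As a preliminary reduction, replacing $\Gamma$ by a conjugate (and transporting $\nu_o$, $m^{\BMS}$, $m^{\BR}$ accordingly) we may assume $\Gamma\cap\inte A^+M\neq\emptyset$. Moreover, since $m^{\BMS}$ is $AM$-ergodic it is conservative, which forces $\Gamma$ to be of divergence type and $\nu_o$ to be, up to a scalar, the Patterson--Sullivan measure $\nu_\psi$ attached to the unique $\psi\in\dg$ (cf.\ \cite{Ro}); in particular every statement of Sections~\ref{defyy}--\ref{thm1} specific to $\psi\in\dg$, notably Proposition~\ref{prop.NMA}, applies to $\nu_o$.

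For the assertions about $m^{\BMS}$, I would check that the proof of Theorem~\ref{thm.AE} goes through unchanged: Lemma~\ref{lem.sub}, Corollary~\ref{cor.dec} and Proposition~\ref{lem.AI} use only the $AM$-ergodicity of $m^{\BMS}$ (our hypothesis), the Guivarc'h--Raugi description of $\pc$-minimal subsets together with Proposition~\ref{prop.GR} (valid for any Zariski dense discrete $\Gamma$), a Hopf argument, and (i)--(iii). One concludes that the $A$-ergodic components of $m^{\BMS}$ are exactly the restrictions $m^{\BMS}|_{\cal E_0}$, $\cal E_0\in\yg$, so that their number equals $\#\yg=\#M/M_\Gamma$. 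When $G\not\simeq\op{SL}_2(\br)$ the group $M$ is connected, and since $M^\circ\subseteq M_\Gamma$ by Proposition~\ref{prop.GR} we get $M_\Gamma=M$, hence $\#\yg=1$ and $m^{\BMS}$ is $A$-ergodic; when $G\simeq\op{SL}_2(\br)$ we have $M\cong\z/2\z$, so $M_\Gamma$ is $\{e\}$ or $M$, whence $\#\yg\le2$. The same argument also yields, as in Corollary~\ref{full}, that $\nu_o$-almost every point of $\La\cap N^+e^+$ is a strong Myrberg point.

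For the assertions about $m^{\BR}$, I would first note that $m^{\BR}$ is $NM$-ergodic --- in rank one this is equivalent to $AM$-ergodicity of $m^{\BMS}$, both amounting to ergodicity of the $\Gamma$-action on $\La^{(2)}\times AM$ (cf.\ \cite{Ro,Win}). Next I would establish $M^\circ\subseteq\mathsf E_{\nu_o}$ (Definition~\ref{def.ess}): this is vacuous for $G\simeq\op{SL}_2(\br)$ since $M^\circ=\{e\}$, while for $G\not\simeq\op{SL}_2(\br)$ the proof of Proposition~\ref{prop.E} carries over --- it uses only (i)--(iii), the $\psi$-visual-metric and shadow estimates of \cite{LO} (which in rank one reduce to the classical Sullivan shadow lemma and visual metric, valid for any Zariski dense $\Gamma$), the equi-continuity Proposition~\ref{lem.diam}, and the full $\nu_o$-measure of strong Myrberg points just obtained. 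Combined with the Schottky construction of \cite[Lem.~4.3]{Ben} underlying Lemma~\ref{lem.Sch} and with Corollary~\ref{jordan}, this gives the conclusion of Proposition~\ref{cor.A}: the closed subgroup of $AM$ generated by $\{\hat\la(\ga_0):\ga_0\in\Ga^\star,\ \psi(\la(\ga_0))>\log 3N_0+1\}$ contains $AM^\circ=AM$, so $\mathsf E_{\nu_o}=AM\supseteq M^\circ$. Finally the proof of Proposition~\ref{lem.in} applies --- via Proposition~\ref{prop.ED}, Lemma~\ref{lem.inv1}, the $NM$-ergodicity of $m^{\BR}$, and Proposition~\ref{prop.NMA} --- giving that the $N$-ergodic components of $m^{\BR}$ are precisely the $m^{\BR}|_{\cal E_0}$, $\cal E_0\in\yg$; their number is again $\#\yg=\#M/M_\Gamma$, which is $1$ for $G\not\simeq\op{SL}_2(\br)$ and at most $2$ for $G\simeq\op{SL}_2(\br)$.

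I expect the main obstacle to be the careful bookkeeping of where the Anosov hypothesis (or the specific Patterson--Sullivan measure) entered in Sections~\ref{defyy}--\ref{thm1}, and in particular handling the case where $m^{\BMS}$ is infinite, which does occur (e.g.\ for cyclic covers of compact hyperbolic manifolds): the finiteness of the transverse measure on $\Gamma\ba\La^{(2)}\times\br$ invoked in Section~\ref{subsec.m} via \cite{Samb2,Car} must be replaced throughout by conservativity, and the Birkhoff-ergodic-theorem steps in Proposition~\ref{lem.AI} and Lemma~\ref{lem.sh2} (as well as the derivation that $\spa$ is $\nu_o$-conull) re-run with Hopf's ratio ergodic theorem; one must then check that the genericity conclusions survive. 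A secondary point is to justify, in this generality, the equivalence ``$m^{\BMS}$ $AM$-ergodic $\iff$ $m^{\BR}$ $NM$-ergodic'' used at the start of the third paragraph.
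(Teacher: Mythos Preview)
Your proposal is correct and follows essentially the same route as the paper's own argument: both observe that the three Anosov-specific ingredients (antipodality, regular divergence of Cartan projections, atom-freeness) are automatic in rank one, then rerun Theorem~\ref{thm.AE} for the $A$-ergodic decomposition of $m^{\BMS}$, invoke the Hopf ratio ergodic theorem (in place of Birkhoff, to handle possibly infinite $m^{\BMS}$) to get full $\nu_o$-measure on strong Myrberg points, and finally push through the Section~7 essential-value machinery and Proposition~\ref{lem.in} to handle $m^{\BR}$; the case split comes from $[M:M^\circ]=1$ when $G\not\simeq\op{SL}_2(\br)$ and $[M:M^\circ]=2$ otherwise. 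Your bookkeeping is in fact slightly more detailed than the paper's sketch (you track $M_\Gamma$ rather than just $M^\circ$, and you flag the $NM$-ergodicity of $m^{\BR}$ as a needed input), but the strategy is the same.
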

In the rank one case, all the properties that we had to establish for Anosov groups hold automatically from the
negative curvature property of the associated symmetric space. As $\Gamma$ is Zariski dense, Theorem \ref{thm.AE} proves that $m^{\BMS}$ is  the sum of at most $[M:M^\circ]$ number of $A$-ergodic components. Then the Hopf ratio ergodic theorem for the one-parameter subgroup $A$ implies that $\nu_o$ gives full measure on the set of strong Myrberg limit points of $\Gamma$, i.e., Corollary \ref{full3} holds.
Now the arguments in section 7 shows that the set of $\nu_o$-essential values is equal to $AM$, and hence $m^{\BR}$ is
the sum of at most  $[M:M^\circ]$ number of $N$-ergodic components. When  $G\not\simeq \op{SL}_2(\br)$, $M$ is connected
\cite[Lem. 2.4]{Win}
and for $G\simeq \op{SL}_2(\br)$, $M_\Gamma=\{\pm e\}$ by (\cite{CG}, Lem. 2). Hence Theorem \ref{final} follows.
\end{Rmk}

\end{document}